\numberwithin{equation}{section}
\theoremstyle{plain}
\definecolor{darkgreen}{RGB}{55,138,0}
\newtheorem{theorem}{Theorem}[section]
\newtheorem{lemma}[theorem]{Lemma}
\newtheorem{proposition}[theorem]{Proposition}
\newtheorem{corollary}[theorem]{Corollary}
\theoremstyle{definition}
\newtheorem{definition}[theorem]{Definition}
\newtheorem{example}[theorem]{Example}
\newtheorem{remark}[theorem]{Remark}
\newtheorem{question}[theorem]{Question}
\newtheorem{hypothesis}[theorem]{Hypothesis}
\numberwithin{equation}{theorem} 
\newcommand{\bfl}{\mathfrak l}
\DeclareMathOperator{\hdet}{hdet} 
\DeclareMathOperator{\GL}{GL}
\DeclareMathOperator{\gldim}{gldim} 
\DeclareMathOperator{\Ext}{Ext} 
\DeclareMathOperator{\tr}{tr}
\DeclareMathOperator{\GKdim}{GKdim}
\DeclareMathOperator{\End}{End} 
\DeclareMathOperator{\p}{{\sf p}} 
\DeclareMathOperator{\Gal}{Gal}
\newcommand{\id}{\operatorname{id}}
\newcommand{\ch}{\operatorname{char}}
\newcommand\inv{^{-1}}
\newcommand\kk{\Bbbk}
\DeclareMathOperator{\adj}{adj}
\DeclareMathOperator{\Aut}{Aut}
\newcommand{\Autgr}{\Aut_{\operatorname{gr}}}
\DeclareMathOperator\diag{diag}
\DeclareMathOperator\Id{Id}
\DeclareMathOperator\Oz{Oz}
\DeclareMathOperator{\Padj}{Padj}
\DeclareMathOperator\pf{pf}
\DeclareMathOperator{\rk}{rk}
\DeclareMathOperator\SL{SL}
\DeclareMathOperator\Tr{Tr}
\newcommand\be{\mathbf e}
\newcommand\bp{\mathbf p}
\newcommand\bq{\mathbf q}
\newcommand\bu{\mathbf u}
\newcommand\bv{\mathbf v}
\newcommand\bx{\mathbf x}
\newcommand\by{\mathbf y}
\newcommand{\fa}{\mathfrak{a}}
\newcommand{\fb}{\mathfrak{b}}
\newcommand{\mff}{\mathfrak{f}}
\newcommand{\fj}{\mathfrak{j}}
\newcommand\oj{\mathfrak{oj}}
\newcommand\oa{\mathfrak{oa}}
\newcommand\od{\mathfrak{od}}
\newcommand\pg{\mathfrak{pg}}
\newcommand\ZZ{\mathbb Z}
\newcommand{\SP}{S_{\bp}}
\newcommand{\ZSP}{ZS_{\bp}}
\newcommand\restrict[1]{\raisebox{-.3ex}{$|$}_{#1}}
\begin{document}

\title[Ozone groups and centers of skew polynomial rings]
{Ozone groups and centers of skew polynomial rings}

\author{Kenneth Chan, Jason Gaddis, Robert Won, James J. Zhang}

\address{(Chan) Department of Mathematics, Box 354350,
University of Washington, Seattle, Washington 98195, USA}
\email{kenhchan@math.washington.edu, ken.h.chan@gmail.com}

\address{(Gaddis) Department of Mathematics, 
Miami University, Oxford, Ohio 45056, USA} 
\email{gaddisj@miamioh.edu}

\address{(Won) Department of Mathematics,
George Washington University, Washington, DC 20052, USA}
\email{robertwon@gwu.edu}

\address{(Zhang) Department of Mathematics, Box 354350,
University of Washington, Seattle, Washington 98195, USA}
\email{zhang@math.washington.edu}

\begin{abstract}
We introduce the ozone group of a noncommutative algebra $A$, 
defined as the group of automorphisms of $A$ which fix every 
element of its center. In order to initiate the study of ozone 
groups, we study PI skew polynomial rings, which have long 
proved to be a fertile testing ground in noncommutative 
algebra. Using the ozone group and other invariants defined 
herein, we give explicit conditions for the center of a PI 
skew polynomial ring to be Gorenstein (resp. regular) in low 
dimension.
\end{abstract}

\subjclass[2010]{16E65, 16S35, 16W22, 16S38}

\keywords{Skew polynomial ring, center, ozone group, ozone 
Jacobian, reflection, Gorenstein property, invariant theory}

\maketitle

\setcounter{section}{-1}
\section{Introduction}
\label{zzsec0}

The centers of PI Artin--Schelter regular algebras of global 
dimension three, including the skew polynomial rings of three 
variables, have been studied by Artin \cite{Art} and Mori 
\cite{Mori} from the point of view of noncommutative projective 
geometry. A motivation of this paper is to study the center of 
a PI skew polynomial ring of any number of variables from an 
algebraic point of view. 

One of our aims is to search for new invariants that control 
the structure of a PI skew polynomial ring and its center, as 
well as the interplay between them. For example, are there 
combinatorial invariants that are closely related to the 
Calabi--Yau property of a PI skew polynomial ring or the 
Gorenstein property of its center? In our results below, we 
provide several invariants which provide an affirmative answer 
to this question (e.g., $\pg_S$ in Theorem~\ref{zzthm0.10} or 
the element $\oj_S\; \pg_S$ in Theorem~\ref{zzthm0.11}).

We start with a more general class of algebras, as we hope 
that some of the ideas in this paper will work more generally. 
Throughout, let $\kk$ denote a base field with $\ch \kk = 0$. 
Let $A$ be a noetherian PI Artin--Schelter regular algebra 
with center $Z$. The first combinatorial/algebraic invariant 
we introduce is the following, which will be studied in more 
detail in \cite{CGWZ2}.

\begin{definition}[{\cite[Definition 1.1]{CGWZ2}}]
\label{zzdef0.1} 
The {\it ozone group} of $A$ is
\[\Oz(A):=\Aut_{Z{\text{-alg}}}(A).\]
\end{definition}

The observant reader will notice a similarity between the ozone group and the Galois group of a field extension $E \subset F$. The classical Galois group $\Gal(F/E)$ is not an ozone group, \emph{per se}, as the ozone group of any commutative ring is trivial. However, both Galois groups and ozone groups involve automorphisms of an overstructure which fix a central substructure. This relationship will be considered further in \cite{CGWZ2}.

By \cite[Theorem 1.8]{CGWZ2}, the order of the ozone group 
satisfies the following condition
\[ 1\leq |\Oz(A)|\leq \rk(A_Z).\]
It is quite surprising that the skew polynomial rings play a 
role at one of the extreme cases of the above inequalities 
and that they can, in fact, be characterized in terms of 
properties of the ozone group [Lemma~\ref{zzlem0.2}]. 
 
Let $\bp := (p_{ij}) \in M_n(\kk)$ be a multiplicatively 
antisymmetric matrix. The \emph{skew polynomial ring} 
\[\SP := \kk_{\bp}[x_1,\dots,x_n]\] 
is the $\kk$-algebra generated by $\{x_1,\dots,x_n\}$ and 
subject to relations
\[x_j x_i= p_{ij} x_i x_j \text{ for all } 1 \leq i, j \leq n.\]
By \cite[Theorem 2]{Ga}, the parameter ${\bf p}$, up to a 
permutation of $\{1,\dots,n\}$, is an algebra invariant of 
$\SP$. It is easy to see that $\SP$ is PI (that is, $\SP$ 
satisfies a polynomial identity) if and only if each $p_{ij}$ 
is a root of unity and this is the setting we consider in 
this paper. 

Skew polynomial rings have been studied extensively and
their ring theoretic properties are well-understood. In particular, $\SP$ is an Artin--Schelter 
regular algebra of global and Gelfand--Kirillov dimension $n$. 
Skew polynomial rings provide a good set of examples to test theories related to Artin-Schelter regular algebras in general.

However, there are still unsolved questions concerning skew
polynomial rings. For example, 
when $p \neq 1$ is a root of unity 
it is unknown if $\Bbbk_{p}[x_1,x_2,x_3]$
is cancellative \cite[Question 0.8]{TRZ}
and it is not clear how to describe its full automorphism group.
In the current paper, we pay more attention to some homological questions.

\begin{lemma}[{\cite[Theorem 0.3]{CGWZ2}}]
\label{zzlem0.2} 
Suppose $\kk$ is algebraically closed. Let $A$ be a 
noetherian PI Artin--Schelter regular algebra generated 
in degree 1. Then $A$ is isomorphic to a skew polynomial 
ring if and only if $\Oz(A)$ is abelian and 
$|\Oz(A)|=\rk (A_Z)$. 
\end{lemma}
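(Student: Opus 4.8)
The plan is to prove the two implications separately, using the inequality $1\le|\Oz(A)|\le\rk(A_Z)$ of \cite[Theorem~1.8]{CGWZ2} together with the basic structure theory of ozone groups from \cite{CGWZ2}, in particular that every ozone automorphism of a connected graded algebra preserves the grading. For the forward implication, suppose $A\cong\SP$ and set $N:=\rk(A_Z)$. Since $\SP$ is $\ZZ^n$-graded with one-dimensional homogeneous components $\kk x^{\mathbf a}$, its center is the span of its central monomials, $Z=\bigoplus_{\mathbf a\in L}\kk x^{\mathbf a}$, where $L\subseteq\ZZ^n_{\ge 0}$ is the set of exponent vectors of central monomials; the subgroup $\widetilde L\le\ZZ^n$ generated by $L$ has finite index equal to $N$. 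For $\lambda\in(\kk^\times)^n$ the diagonal assignment $x_i\mapsto\lambda_i x_i$ is always an algebra automorphism of $\SP$, and it lies in $\Oz(A)$ precisely when $\lambda^{\mathbf a}=1$ for every $\mathbf a\in L$, that is, when $\lambda\in\operatorname{Hom}(\ZZ^n/\widetilde L,\kk^\times)$. As $\kk$ is algebraically closed of characteristic zero and $\ZZ^n/\widetilde L$ is finite of order $N$, there are exactly $N$ such $\lambda$, and these give $N$ distinct, pairwise commuting elements of $\Oz(A)$. Combined with the upper bound $|\Oz(A)|\le N$, this forces $|\Oz(A)|=N$ and shows that $\Oz(A)$ consists precisely of these diagonal automorphisms; in particular it is abelian.

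For the reverse implication, assume $G:=\Oz(A)$ is abelian with $|G|=N:=\rk(A_Z)$, and put $\widehat G:=\operatorname{Hom}(G,\kk^\times)$. Since $G$ is finite abelian, acts faithfully by graded automorphisms, and $\kk$ is algebraically closed of characteristic zero, the eigenspace decomposition gives an algebra grading $A=\bigoplus_{\chi\in\widehat G}A_\chi$, where $A_\chi=\{a\in A:g(a)=\chi(g)a\text{ for all }g\in G\}$, $A_\chi A_\psi\subseteq A_{\chi\psi}$, and the trivial component is $A^G$. Diagonalising the $G$-action on the degree-one part $A_1$ produces a basis $x_1,\dots,x_m$ (with $m:=\dim_\kk A_1$) satisfying $g\cdot x_i=\chi_i(g)x_i$. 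Because the $x_i$ generate $A$ and $G$ acts faithfully, the characters $\chi_1,\dots,\chi_m$ generate $\widehat G$, and hence every $A_\chi$ is nonzero ($A$ is a domain and $A_\chi$ contains a monomial in the $x_i$). The crux is now to prove $A^G=Z$. For this, pass to the Goldie quotient ring $Q:=\operatorname{Frac}(A)$, a division algebra with center $K:=\operatorname{Frac}(Z)$ and $\dim_K Q=\rk(A_Z)=N$; the $\widehat G$-grading extends to $Q=\bigoplus_\chi Q_\chi$ with every $Q_\chi$ nonzero (for $0\neq q\in Q_\chi$ one has $q^{-1}\in Q_{\chi^{-1}}$ and $Q_\chi=Q^G q$), so $Q$ is free of rank $|\widehat G|=N$ over its trivial component $Q^G$. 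Then $N=\dim_K Q=(\dim_{Q^G}Q)(\dim_K Q^G)=N\cdot\dim_K Q^G$, whence $Q^G=K$ and therefore $A^G=A\cap Q^G=A\cap K=Z$.

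Granting $A^G=Z$, the rest is formal. The trivial component of the $\widehat G$-grading is now $Z$, so $A=\bigoplus_\chi A_\chi$ is a decomposition of $A$ into $N$ nonzero torsion-free $Z$-modules whose ranks sum to $\rk(A_Z)=N$; hence each $A_\chi$ has rank one over $Z$. In particular $x_ix_j$ and $x_jx_i$ both lie in the rank-one $Z$-module $A_{\chi_i\chi_j}$, so $x_jx_i=\beta_{ij}x_ix_j$ for some $\beta_{ij}\in K$ of $\ZZ$-degree zero. Since $\chi_i^{|G|}=1$, the element $x_i^{|G|}$ has trivial character and so lies in $A^G=Z$, making it central; this forces $\beta_{ij}^{|G|}=1$, and since $\kk$ is algebraically closed the equation $t^{|G|}=1$ has only the $|G|$-th roots of unity as solutions in $K$, so $\beta_{ij}\in\kk$. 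Thus $(\beta_{ij})$ is multiplicatively antisymmetric with entries roots of unity, and the defining presentation yields a surjection $\kk_{(\beta_{ij})}[t_1,\dots,t_m]\twoheadrightarrow A$, $t_i\mapsto x_i$. Since $A$ is a quotient of this skew polynomial ring, $\GKdim A\le m$; since $A$ is Artin--Schelter regular and generated in degree one, $m=\dim_\kk A_1\le\GKdim A$; so $\GKdim A=m$, and because a proper homogeneous quotient of the (noetherian PI domain) $\kk_{(\beta_{ij})}[t_1,\dots,t_m]$ has strictly smaller Gelfand--Kirillov dimension, the surjection must be an isomorphism. Hence $A$ is a skew polynomial ring.

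The step requiring genuine work is the identification $A^G=Z$: it is exactly here that the equality $|\Oz(A)|=\rk(A_Z)$, not merely the inequality, is used, via the comparison $\dim_K Q=\dim_{Q^G}Q$ inside the fraction division algebra, and that comparison rests on the full-support property of the $\widehat G$-grading and on ozone automorphisms being graded. I expect this, rather than the surrounding module-theoretic bookkeeping or the final Hilbert-series comparison, to be the main obstacle.
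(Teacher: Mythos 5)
There is nothing in this paper to compare your argument with: Lemma~\ref{zzlem0.2} is quoted from \cite[Theorem 0.3]{CGWZ2}, which is listed as in preparation, and no proof is given here. Judged on its own terms, your overall strategy is reasonable, and the step you correctly identify as the crux --- proving $A^G=Z$ for $G=\Oz(A)$ by passing to the quotient division algebra $Q$ with center $K=\operatorname{Frac}(Z)$, using that every $\widehat{G}$-component of $Q$ is nonzero to get $\dim_K Q = |{\widehat G}|\cdot \dim_K Q^G$ --- is correct as written, and the forward direction (counting the $N=[\ZZ^n:\widetilde L]$ diagonal characters against the bound $|\Oz(A)|\le \rk(A_Z)$) is fine.

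However, the converse as written has a genuine gap at the very end, and one unproved structural input earlier. The gap: you conclude the surjection $\kk_{(\beta_{ij})}[t_1,\dots,t_m]\twoheadrightarrow A$ is injective from the inequality $m=\dim_\kk A_1\le \GKdim A$, which you justify only by ``since $A$ is Artin--Schelter regular and generated in degree one.'' This is not a quotable triviality: it fails for non-regular graded domains (e.g.\ $\kk[x,y,z]/(xz-y^2)$ has three degree-one generators and GK-dimension two), and even among AS regular algebras there are gldim-two algebras generated in degree one by arbitrarily many elements (the non-noetherian single-relation examples), so the inequality must be extracted from noetherianity/PI-ness plus regularity (e.g.\ via sub-exponential growth forcing the Hilbert series to be $1/p(t)$ with $p$ a product of cyclotomic factors, together with $\gldim=\GKdim$ in the PI case, or some other argument). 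Since this inequality is exactly what makes the kernel vanish, the final step is incomplete as it stands. Separately, your argument needs every element of $\Oz(A)$ to preserve the grading (you diagonalize $G$ on $A_1$ to get eigenvector generators $x_i$, and later use $\chi_i$-homogeneity of the $x_i$); you cite this as ``basic structure theory of ozone groups from \cite{CGWZ2},'' i.e.\ from the very paper whose theorem you are proving, and it is not established in your write-up. (By contrast, the facts that $A$ is a domain and module-finite over $Z$ are standard for noetherian PI AS regular algebras and are fair to cite.) Fixing these two points --- a proof that ozone automorphisms are graded, and a genuine argument for $\dim_\kk A_1\le\GKdim A$ or an alternative route to injectivity of the surjection --- is what separates your sketch from a complete proof.
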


We note that the hypothesis on $\kk$ is not necessary for the forward direction in Lemma~\ref{zzlem0.2}.

For most of the results in this paper, we will assume the following hypothesis.

\begin{hypothesis}\label{zzhyp0.3}
Let $S = \SP = \kk_{\bp}[x_1,\ldots,x_n]$ be a PI skew 
polynomial ring with $\deg(x_i)=1$ for all $i$ and let 
$Z=Z\SP$ denote the center of $S$. Let $\xi$ be a 
primitive $\ell$th root of unity such that $p_{ij}=
\xi^{b_{ij}}$ for some integer $b_{ij}$. By convention, 
we choose the $b_{ij}$ such that $b_{ii}=0$ and 
$b_{ji}=-b_{ij}$ for all $i$ and $j$. We assume that 
$\ell$ is minimal and that $\ell>1$ (which is equivalent 
to the noncommutativity of $S$). Hence 
$\gcd(b_{ij} , \ell)_{1 \leq i,j \leq n} = 1$. Implicitly 
we fix a set of generators $\{x_1,\dots,x_n\}$. 

We let $\overline{\ZZ}$ denote $\ZZ/\ell\ZZ$ and let 
$\overline{a}$ (where $a\in\ZZ$, $\ZZ^n$, or $M_n(\ZZ)$) 
denote the image of $a$ modulo $\ell$. Let $B = (b_{ij})$, 
so $B$ and $\overline{B}$ are $n \times n$ skew-symmetric matrices.

Let $\phi_i$ denote the automorphism of $S$ given by 
conjugation by $x_i$, that is
\[ \phi_i(f) = x_i\inv f x_i \text{ for all } f \in S\]
and let $O$ be the subgroup of $\Autgr(S)$ generated by 
$\{\phi_1, \dots, \phi_n\}$. 
\end{hypothesis}

It turns out that $O$ is the ozone group $\Oz(S)$ 
\cite{CGWZ2}. For each fixed $i$, we have 
$\phi_i(x_s)=\xi^{b_{is}} x_s=p_{is}x_s$ 
for all $s$ and the order of $\phi_i$ is
\[ o(\phi_i)=\ell/\gcd\{b_{1i},\dots,b_{ni},\ell\}.\]
It is easy to see that the center of $\SP$ is 
\[ \ZSP=\SP^O.\]
Hence, we are able to use tools of noncommutative invariant 
theory to reveal properties of the center $\ZSP$. 

In general, it is difficult to discern properties of $\SP$ 
simply by examining its parameters ${\bf p}:= (p_{ij})$. 
However, in low-dimension ($n\leq 4$), many such properties 
can be worked out explicitly. In particular, we are interested 
in studying the center $\ZSP$ in terms of ${\bf p}$.

Note that most of the questions we consider are easily 
addressed in the case $n=2$. For a large part of this paper, 
we focus on the cases $n=3$ and $n=4$. We notice that there 
are some similarities and differences between the cases of 
$n$ being odd or even. In some cases we are able to say 
something about the case of higher $n$, and we hope that 
some of the ideas and results presented here can be extended 
to general $n$.

Several of our results make use of the \emph{Pfaffian} of the 
matrix $B$ associated to $\SP$, which we denote by $\pf(B)$. 
The Pfaffian was introduced by Cayley \cite{Cay}, based on 
earlier work of Jacobi \cite{Jac}. In particular, for even $n$, 
Cayley showed that the Pfaffian of a skew-symmetric matrix is 
$\sqrt{\det(B)}$. When $n$ is odd, every $n \times n$ matrix 
has Pfaffian $0$. Now suppose $n$ is even and let $A=(a_{ij})$ 
be a skew symmetric $n \times n$ matrix. Denote by 
$A_{\hat{i}\hat{j}}$ the submatrix of $A$ obtained by deleting 
both the $i$th row and column and the $j$th row and column. The 
Pfaffian of $A$ may be computed as
\[ \pf(A)=\sum_{k=2}^{n} (-1)^k a_{1k} \pf(A_{\hat{1}\hat{k}}).\]

Next we define a few more combinatorial invariants. For the 
remainder of this introduction, we assume 
Hypothesis~\ref{zzhyp0.3}. In particular, we assume $S=\SP$ 
is a PI skew polynomial ring with center $Z=\ZSP$. Define 
\begin{align}\label{E0.3.1}
\mff_i = \gcd\{d_i \mid \exists~d_1,\dots,\widehat{d_i},\dots,d_n ~\text{with}~ x_1^{d_1}\cdots x_i^{d_i}\cdots x_n^{d_n}\in Z\}.
\end{align}
By Proposition~\ref{zzpro1.8}(2), the set of integers 
$\{\mff_1,\dots,\mff_n\}$ with multiplicities, is an 
algebra invariant of $S$. 

In this paper we would like to show that $\{f_1,\cdots,f_n\}$
also play an essential role in several properties of
the algebra $\SP$. They 
have been used to control the automorphism group of $S$. 
For example, if $\mff_i\geq 2$ for all $i$, then 
every automorphism of $S$ is affine and every locally 
nilpotent derivation of $S$ is zero \cite[Theorem 3]{CPWZ2}. 
So, these numbers deserve further attention.
It would be nice to have a simple formula for $f_i$ in terms
of $\{p_{ij}\}$.

Now we define the ``ozone'' version of three invariants 
studied in \cite{KZ}. 
The {\it ozone Jacobian} of $S$ 
is defined to be 
\begin{align}
\label{E0.3.2}
\oj_S:=\prod_{i=1}^n x_i^{\mff_i-1},
\end{align}
the {\it ozone arrangement} of $S$ is defined to be 
\begin{align}
\label{E0.3.3}
\oa_S:=\prod_{\mff_i>1} x_i,
\end{align}
and the {\it ozone discriminant} of $S$ is defined to be 
\begin{align}
\label{E0.3.4}
\od_S:=\prod_{\mff_i>1} x_i^{\mff_i}=\oj_S \; \oa_S.
\end{align}
All three of these definitions depend on the chosen generating 
set $\{x_1,\dots,x_n\}$. However, by 
Proposition~\ref{zzpro1.8}(1), up to an element of 
$\kk^{\times}$, $\oj_S$, $\oa_S$, and $\od_S$ are algebra 
invariants of $\SP$.

We also consider the 
\emph{product of generators} of $S$ which is defined to be
\[ \pg_S:=\prod_{i=1}^n x_i.\]
Note that  $\pg_S$ is not an algebra invariant.

We now summarize our main results and the structure of this 
paper. In Section~\ref{zzsec1}, we recall some basic 
definitions. In Section~\ref{zzsec2}, we work out some basic 
facts concerning the reflections in $O$. In Sections~\ref{zzsec3}, 
\ref{zzsec4}, and \ref{zzsec5}, the main results are proven. 
More details on those sections are given below. We think the 
results are inspiring, though the proofs are not difficult.
At the end, in Section \ref{zzsec6}, we list some questions 
and give some examples. 

\subsection{Auslander's Theorem and smallness}
\label{zzsec0.1}
In \cite{Au}, Auslander proved that for $V$ a finite-dimensional 
$\kk$-space, $A=\kk[V]$, and $G$ a finite subgroup of $\GL(V)$, 
the map
\begin{align}\label{E0.3.5}
A\# G  &\to \End_{A^G}(A)\\  
a \# g &\mapsto 
\left(\begin{matrix}A & \to & A \\ b & \mapsto & ag(b)
\end{matrix}\right)
\notag
\end{align}
is an isomorphism if and only if $G$ is small (contains no 
pseudo-reflections). This map may be defined for any algebra 
$A$ and any finite subgroup $G$ of $\Aut(A)$, though in general 
it may not be injective or surjective. We say Auslander's 
Theorem holds for the pair $(A,G)$ (or $(A,G)$ satisfies
Auslander's Theorem) if \eqref{E0.3.5} is an isomorphism.

Auslander's Theorem is a critical component in the study 
of the McKay correspondence, and has been studied in the 
noncommutative setting \cite{CKWZ2,CYZ,GKMW, Zhu22}. Bao, 
He, and the fourth author introduced the notion of pertinency 
in \cite{BHZ2,BHZ1}. As above, let $A$ be an algebra and $G$ 
a finite subgroup of $\Aut(A)$. Then the \emph{pertinency} of 
the $G$ action on $A$ is defined to be
\begin{align}
\label{E0.3.6}
\p(A,G)=\GKdim(A)-\GKdim\left( \frac{A\#G}{(f_G)}\right)
\end{align}
where $f_G = \sum_{g \in G} 1\# g \in A\#G$.

In Section~\ref{zzsec3} we study Auslander's Theorem for the 
pair $(S,O)$. There is a notion of a reflection in the 
noncommutative setting related to the trace series of a graded 
algebra [Definition \ref{zzdef2.1}(1)]. However, for the skew 
polynomial rings $S$, a diagonal automorphism $g$ is a 
reflection if and only if it is a classical pseudo-reflection 
when restricted to $\bigoplus_{i=1}^n \kk x_i$ [Example 
\ref{zzex2.2}].

\begin{theorem}
\label{zzthm0.4}
Assume Hypothesis~\ref{zzhyp0.3}. 
The following are equivalent.
\begin{enumerate}
\item[(1)] 
Auslander's Theorem holds for $(S,O)$.
\item[(2)] 
The pertinency satisfies $\p(S,O)\geq 2$.
\item[(3)]
The $O$-action is small in the sense of 
Definition~\ref{zzdef2.1}(3).
\item[(4)] 
The $O$-action is small in the classical sense, i.e., 
it contains no pseudo-reflection when restricted to 
$\bigoplus_{i=1}^n \kk x_i$.
\item[(5)]
The only Artin--Schelter regular algebra $T$ satisfying 
$Z \subseteq T \subseteq S$ is $T = S$.
\item[(6)]
The ozone Jacobian $\oj_S=1$, namely, $\mff_i=1$ for all $i$.
\item[(7)]
For each $i$, there is an element of the form 
$x_1^{a_1}\cdots x_i \cdots x_{n}^{a_n}$ in $Z$.
\end{enumerate}
\end{theorem}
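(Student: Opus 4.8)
The plan is to partition the seven statements into the invariant-theoretic triple (1)--(3), the combinatorial triple (4),(6),(7), and the ``no intermediate regular algebra'' condition (5), and to link all of these through the single notion ``$O$ contains a reflection.'' The cheap equivalences come first. For (6)$\Leftrightarrow$(7): since $p_{ij}^{\ell}=1$ we have $x_i^{\ell}\in Z$ for every $i$, and a monomial $x_1^{d_1}\cdots x_n^{d_n}$ is central exactly when $B\mathbf d\equiv 0\pmod{\ell}$ (as a column vector, using $B^{T}=-B$); translating by multiples of $\ell e_j$ with $j\neq i$ to clear negative entries off the other coordinates shows that the $x_i$-exponents of central monomials are precisely the nonnegative elements of the subgroup $L_i:=\{d_i\mid \mathbf d\in\ZZ^n,\ B\mathbf d\equiv 0\pmod\ell\}$ of $\ZZ$, so $\mff_i$ is the positive generator of $L_i$ and condition (7) simply says $1\in L_i$, i.e.\ $\mff_i=1$. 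For (3)$\Leftrightarrow$(4): every element of $O$ is diagonal in the basis $x_1,\dots,x_n$, so by Example~\ref{zzex2.2} an element of $O$ is a reflection in the sense of Definition~\ref{zzdef2.1}(1) if and only if its restriction to $\bigoplus_i\kk x_i$ is a classical pseudo-reflection; hence (3) is literally (4).

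The combinatorial heart is (4)$\Leftrightarrow$(6). Since $O$ is the abelian group generated by the $\phi_k$ with $\phi_k(x_i)=\xi^{b_{ki}}x_i$, an element $\phi=\prod_k\phi_k^{c_k}$ scales $x_i$ by $\xi$ to (up to sign) the $i$-th coordinate of $\overline B\mathbf c$. A diagonal automorphism of $S$ restricts to a pseudo-reflection on $\bigoplus_i\kk x_i$ exactly when it fixes all but one coordinate, so $O$ contains such a reflection fixing $x_i$ for all $i\neq j$ if and only if $\mu e_j\in\operatorname{im}(\overline B)$ for some $\mu\neq 0$ in $\overline\ZZ$. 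Because $\overline B$ is skew-symmetric and the standard pairing on $\overline\ZZ^{\,n}=(\ZZ/\ell)^n$ is perfect, one has $\operatorname{im}(\overline B)=\ker(\overline B)^{\perp}$, so this holds if and only if the $j$-th coordinate projection of $\ker\overline B$ is a proper subgroup of $\overline\ZZ$. That projection is $(\mff_j)\subseteq\ZZ/\ell$, which is proper exactly when $\mff_j\neq 1$ (as $\mff_j\mid\ell$). Running over $j$ gives (4)$\Leftrightarrow$(6), hence (3)$\Leftrightarrow$(4)$\Leftrightarrow$(6)$\Leftrightarrow$(7).

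For the invariant-theoretic triple, (1)$\Leftrightarrow$(2) is the pertinency criterion for Auslander's theorem of Bao, He, and the fourth author \cite{BHZ1,BHZ2}, and here the plan is just to verify its hypotheses for the noetherian connected graded AS regular algebra $S$, the finite group $O$ of graded automorphisms, and its noetherian fixed ring $Z$. For (2)$\Leftrightarrow$(3): if $O$ contains a reflection then the fixed ring of the cyclic group it generates is again AS regular of the same GK-dimension, forcing $\p(S,O)\leq 1$; conversely if the $O$-action is small then $\p(S,O)\geq 2$ by the standard pertinency bound for small actions on AS regular algebras. For (5): the implication $\neg(6)\Rightarrow\neg(5)$ is immediate, since a reflection $g\in O$ yields $S^{\langle g\rangle}$, itself a skew polynomial ring in $n$ variables hence AS regular, with $Z\subseteq S^{\langle g\rangle}\subsetneq S$. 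For $\neg(5)\Rightarrow\neg(6)$: given AS regular $T$ with $Z\subseteq T\subsetneq S$, one has $\operatorname{Frac}(Z)\subseteq\operatorname{Frac}(T)\subsetneq\operatorname{Frac}(S)$ (strictly, since $T$ and $S$ are maximal orders), and the Galois-type correspondence for the extension $S/Z$ carrying the $O$-action of Lemma~\ref{zzlem0.2} produces $1\neq H\le O$ with $T=S\cap\operatorname{Frac}(T)=S^{H}$; since $S^{H}=T$ is AS regular, a noncommutative version of the ``only if'' direction of the Chevalley--Shephard--Todd theorem shows $H$ is generated by reflections, so $O$ has a reflection and (6) fails.

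The bookkeeping in the first two paragraphs is routine, the computational content being the duality $\operatorname{im}(\overline B)=\ker(\overline B)^{\perp}$ over $\ZZ/\ell$, which is short. I expect the main obstacle to be the step $\neg(5)\Rightarrow\neg(6)$: one must correctly set up a Galois-type correspondence between subgroups of $O$ and intermediate subalgebras of $S/Z$ (using $|O|=\rk(S_Z)$ together with the fact that AS regular PI algebras are maximal orders), and then invoke a noncommutative Chevalley--Shephard--Todd statement for the fixed ring $S^{H}$. Assembling (1)$\Leftrightarrow$(2)$\Leftrightarrow$(3) from the pertinency literature should be straightforward once the hypotheses are checked for the pair $(S,O)$.
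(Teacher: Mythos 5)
Your handling of the combinatorial block is fine: the identification of $\mff_i$ with the positive generator of the projection of the kernel lattice (giving (6)$\Leftrightarrow$(7)), the reduction of (3) to (4) via Example~\ref{zzex2.2}, and the duality argument $\operatorname{im}(\bB)=\ker(\bB)^{\perp}$ over $\ZZ/\ell$ for (4)$\Leftrightarrow$(6) are all correct; the last of these is a genuinely different (and rather clean) route from the paper, which instead passes through the reflection subgroup $H$ and the computation $S^H=\kk_{\bq}[x_1^{\mff_1},\dots,x_n^{\mff_n}]$ with $w_i=\mff_i$ (Proposition~\ref{zzpro2.6}). The real problem is your treatment of (5). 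The easy direction (a reflection $g$ gives the AS regular intermediate algebra $S^{\langle g\rangle}$ via Theorem~\ref{zzthm2.3}) is fine, but the implication $\neg(5)\Rightarrow\neg(6)$ rests on a ``Galois-type correspondence'' producing, from an arbitrary AS regular $T$ with $Z\subseteq T\subsetneq S$, a nontrivial subgroup $H\le O$ with $T=S^H$. No such correspondence is established in the paper or available off the shelf, and it is exactly the kind of statement one should be suspicious of here: every element of $O$ is conjugation by a monomial, so the $O$-action becomes inner on $\operatorname{Frac}(S)$, where Skolem--Noether-type phenomena make naive subgroup/intermediate-ring correspondences fail (intermediate division algebras are governed by the double centralizer theorem, not by subgroups of $O$). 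You would at least need to show that every intermediate AS regular $T\supseteq Z$ is $\ZZ^n$-graded (spanned by monomials) before any lattice/subgroup dictionary could apply, and nothing in your sketch does this. The paper avoids the issue entirely: assuming smallness, it has the Auslander isomorphism $S\#O\cong\End_Z(S)$, and if $Z\subseteq T\subsetneq S$ with $T$ AS regular, then $S$ is a graded free $T$-module with a generator in positive degree (\cite[Lemma 1.10(b)]{KKZ5}), so $\End_Z(S)\supseteq\End_T(S)$ contains negative-degree elements, contradicting the nonnegative grading of $S\#O$ (this is Proposition~\ref{zzpro2.5} with $H=1$). That argument, or something equally concrete, is what your proof is missing.

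A secondary, citation-level issue: for (2)$\Leftrightarrow$(3) you invoke ``the standard pertinency bound for small actions on AS regular algebras.'' Stated at that level of generality this is not a theorem (it is essentially the open Auslander problem); what is true and what the paper uses is \cite[Theorem 5.5]{BHZ2}, which gives Auslander's Theorem precisely for small diagonal actions on skew polynomial rings, combined with \cite[Theorem 0.3]{BHZ1} for the pertinency reformulation. Similarly, ``a reflection forces $\p(S,O)\le 1$'' is not justified in your sketch; the clean route is to show the Auslander map fails (e.g.\ via the same negative-degree endomorphism argument, since $S$ is free over the AS regular ring $S^{\langle g\rangle}$) and then apply (1)$\Leftrightarrow$(2), rather than asserting a direct pertinency inequality for subgroups, which you do not prove.
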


Part (6) of the above theorem shows that $\oj_S$ (or the set 
$\{\mff_1, \dots, \mff_n\}$) serves as an invariant which 
can be used to determine whether Auslander's Theorem holds 
for $(S,O)$. For small $n$, we have the following result in 
terms of the matrix $B = (b_{ij})$. 

\begin{theorem}\label{zzthm0.5}
Assume Hypothesis~\ref{zzhyp0.3}.
\begin{enumerate}
\item[(1)] 
Let $n=2$. Then $(S,O)$ does not satisfy Auslander's Theorem.
\item[(2)] 
Let $n=3$. Then $(S,O)$ satisfies Auslander's Theorem if and 
only if $\gcd(b_{ij},\ell)=1$ for each $i \neq j$.
\item[(3)] 
Let $n=4$. Then $(S,O)$ satisfies Auslander's Theorem if and 
only if 
\begin{itemize}
\item 
$\pf(B) \equiv 0 \pmod{\ell}$, and 
\item 
there does not exist an index $j$ and an integer $k$ such 
that $k b_{ij} \equiv 0 \pmod{\ell}$ for all but one $i$.
\end{itemize}
\end{enumerate}
\end{theorem}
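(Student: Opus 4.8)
The plan is to deduce all three parts from Theorem~\ref{zzthm0.4} together with an elementary computation in the integer lattice attached to $B$. By Theorem~\ref{zzthm0.4}, $(S,O)$ satisfies Auslander's Theorem if and only if $\oj_S=1$, equivalently $\mff_i=1$ for every $i$, equivalently the $O$-action restricted to $\bigoplus_{i=1}^n\kk x_i$ contains no pseudo-reflection. Two remarks make these conditions computable. First, since $Z=S^O$ and $\phi_k(x_s)=\xi^{b_{ks}}x_s$, a monomial $x_1^{d_1}\cdots x_n^{d_n}$ is central exactly when $\sum_s b_{ks}d_s\equiv0\pmod\ell$ for all $k$; writing $L=\{\mathbf d\in\ZZ^n:B\mathbf d\equiv0\pmod\ell\}$, the integer $\mff_k$ is the positive generator of the coordinate projection $\pi_k(L)\subseteq\ZZ$. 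Second, the restriction of $O$ to $\bigoplus_i\kk x_i$ is faithful, and $\phi_1^{c_1}\cdots\phi_n^{c_n}$ scales $x_s$ by $\xi^{-(B\mathbf c)_s}$ where $\mathbf c=(c_1,\dots,c_n)$; hence it restricts to a pseudo-reflection precisely when $B\mathbf c\pmod\ell$ is nonzero and supported on a single coordinate.

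For part~(1), with $n=2$ the hypotheses $\gcd(b_{12},\ell)=1$ and $\ell>1$ force $L=\ell\ZZ^2$, so $\mff_1=\mff_2=\ell>1$ and Auslander's Theorem fails. For part~(2), with $n=3$ the integer vector $(b_{23},-b_{13},b_{12})$ lies in $\ker B\subseteq L$, so $\mff_k$ divides $\gcd(b_{ij},\ell)$ for the complementary pair $\{i,j\}=\{1,2,3\}\setminus\{k\}$; conversely, for any $\mathbf d\in L$ the defining congruences show that $\gcd(b_{ij},\ell)$ divides $b\cdot d_k$ for every entry $b$ of $B$ and divides $\ell\,d_k$, whence $\gcd(b_{ij},\ell)\mid d_k$ by the standing coprimality hypothesis. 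Thus $\mff_k=\gcd(b_{ij},\ell)$, and Auslander's Theorem holds if and only if $\gcd(b_{ij},\ell)=1$ for all $i\neq j$.

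Part~(3) is the substantive case; the tool is the Pfaffian adjugate $B^{\ast}$, the skew-symmetric matrix with $(B^{\ast})_{ij}=\pm b_{rs}$ for $\{r,s\}=\{1,2,3,4\}\setminus\{i,j\}$, which satisfies $BB^{\ast}=\pf(B)\,I$. For necessity, suppose Auslander's Theorem holds, so $O$ restricted to $\bigoplus_i\kk x_i$ has no pseudo-reflection. Applying the second remark to $\mathbf c=B^{\ast}e_j$ gives $B\mathbf c=\pf(B)\,e_j$, a nonzero single-coordinate vector unless $\pf(B)\equiv0\pmod\ell$; hence $\pf(B)\equiv0\pmod\ell$. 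Applying it to $\mathbf c=k\,e_j$ gives the vector with entries $kb_{ij}$, which is a pseudo-reflection precisely when $kb_{ij}\equiv0\pmod\ell$ for all but exactly one index $i$; hence no such pair $(j,k)$ exists. For sufficiency, assume $\pf(B)\equiv0\pmod\ell$ and that no such $(j,k)$ exists. Since $\pf(B)\equiv0$, each column $B^{\ast}e_i$ lies in $L$; for $i\neq k$ its $k$th entry is, up to sign, an entry of the complementary block $B_{\hat{k}\hat{k}}$, and as $i$ ranges over the three indices $\neq k$ these cover all three entries of $B_{\hat{k}\hat{k}}$. Together with $\ell e_k\in L$ this yields $\pi_k(L)\supseteq g_k\ZZ$, where $g_k:=\gcd\bigl(\text{entries of }B_{\hat{k}\hat{k}},\,\ell\bigr)$, so $\mff_k\mid g_k$ (in fact $\mff_k=g_k$, by an argument as in part~(2)). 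It remains to see $g_k=1$ for each $k$. If $g_1>1$, choose a prime $p\mid g_1$: then $p\mid\ell$ and $p$ divides $b_{23},b_{24},b_{34}$, so in the graph $G_p$ on $\{1,2,3,4\}$ with an edge $\{i,j\}$ whenever $p\nmid b_{ij}$ every edge is incident to the vertex $1$; moreover $G_p$ is nonempty (else $p$ divides all $b_{ij}$, contradicting the standing hypothesis), so some vertex $j\in\{2,3,4\}$ has degree exactly $1$, and then $(j,\ell/p)$ is a forbidden pair, a contradiction. The cases $g_2,g_3,g_4$ are symmetric. Hence $\mff_k\mid g_k=1$ for all $k$, so $\oj_S=1$ and Auslander's Theorem holds by Theorem~\ref{zzthm0.4}.

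The main obstacle is the $n=4$ sufficiency step. One must pin down the Pfaffian adjugate identity $BB^{\ast}=\pf(B)I$ with the correct signs, and, more substantively, verify that when $\pf(B)\equiv0$ the lattice $L$ of central exponent vectors is controlled exactly by the complementary $3\times3$ Pfaffian blocks, carrying the standing hypothesis $\gcd\bigl((b_{ij}),\ell\bigr)=1$ correctly through each divisibility argument. The graph argument linking the forbidden pairs $(j,k)$ to the vanishing of the $g_k$ is elementary but relies on reading ``for all but one $i$'' as ``for exactly $n-1$ values of $i$'', which is precisely the condition under which a power of $\phi_j$ restricts to a pseudo-reflection.
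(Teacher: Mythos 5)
Your proof is correct, and although it starts from the same reduction as the paper (Theorem~\ref{zzthm0.4}: Auslander's Theorem $\Leftrightarrow$ smallness $\Leftrightarrow$ $\mff_i=1$ for all $i$) and hinges on the same identity $\Padj(B)B=\pf(B)I$, you run the converse directions differently. For $n=3$ the paper (Proposition~\ref{zzpro3.2}) argues directly with reflections, first forcing all the orders $o(p_{ij})$ to coincide by taking powers of the $\phi_i$ and then showing $\overline{B}\by=\lambda\be_i$ has no solution; you instead compute $\mff_k=\gcd(b_{ij},\ell)$ for the complementary pair, which is exactly the paper's Lemma~\ref{lem.fhyper} (proved there but used only later, for the Gorenstein criterion). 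For $n=4$, your necessity half coincides with Proposition~\ref{zzpro3.4} and the first half of Proposition~\ref{zzpro3.5}; for sufficiency the paper assumes a hypothetical reflection $\overline{B}\bu=\lambda\be_4$, multiplies by $\Padj(B)$ to get $\lambda(-b_{23},b_{13},-b_{12},0)^T\equiv 0$, and bootstraps condition (2) to $\lambda b_{ij}\equiv 0$ for all $i,j$, contradicting $\gcd(b_{ij},\ell)=1$, whereas you use the columns of $\Padj(B)$ as explicit central exponent vectors (this is where $\ell\mid\pf(B)$ enters), deduce $\mff_k\mid\gcd(\{b_{rs}\}_{r,s\neq k},\ell)$, and then your graph argument shows conditions (1) and (2) force these gcds to be $1$. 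Your route is a bit longer but yields the explicit values of the $\mff_k$ (the vector $(v_i)$ of Theorem~\ref{zzthm0.12}(3) when $\ell\mid\pf(B)$), linking this theorem to the Gorenstein computations; the paper's contradiction argument is more economical for the bare smallness statement. Two points you treat correctly but should state explicitly in a final write-up: $\mff_k$ equals the gcd of the $k$th coordinates of the full lattice $L=\{\mathbf{d}: B\mathbf{d}\equiv 0 \pmod{\ell}\}$, nonnegativity of exponents being harmless because $\ell\be_i\in L$ (cf.\ Lemma~\ref{zzlem4.1} and Remark~\ref{xxrem4.2}); and the reading of condition (2) as ``for exactly $n-1$ indices $i$'' is also the paper's, as confirmed by Remark~\ref{zzrem3.6}.
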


\subsection{Regular center}\label{zzsec0.2}
In Section~\ref{zzsec4} we consider the question of when 
$Z:=\ZSP$ is regular, that is, under what conditions is $Z$ 
a polynomial ring. Again, in the case $n=2$ this is clear 
(see Theorem~\ref{zzthm0.7}(1)).
For the next theorem, we refer to Definition \ref{zzdef1.6} for the notation $\fj_{S,O}$ and $\fa_{S,O}$.

\begin{theorem}\label{zzthm0.6}
Assume Hypothesis~\ref{zzhyp0.3}. 
The following are equivalent.
\begin{enumerate}
\item[(1)] 
$Z$ is regular.
\item[(2)] 
$Z=\kk[x_1^{w_1},\cdots,x_n^{w_n}]$ for some $w_i\geq 1$.
\item[(3)] 
$Z=\kk[x_1^{\mff_1},\cdots,x_n^{\mff_n}]$.
\item[(4)] 
$S$ is a free module over $Z$.
\item[(5)] 
$O$ is a reflection group.
\item[(6)] 
$|O|=\frac{\ell^n}{\prod_{s=1}^{n} 
\gcd\{b_{1s}, b_{2s},\cdots,b_{ns},\ell\}}$.
\item[(7)]
$|O|=\prod_{i=1}^n \mff_i$.
\item[(8)] 
$O\cong \prod_{s=1}^{n} \langle \phi_s\rangle$.
\item[(9)]
$x_i^{\mff_i}\in Z$ for all $i$.
\end{enumerate}
When $Z$ is regular, then $\oj_S$ is equal to
$\fj_{S,O}$ and $\oa_S$ is equal to $\fa_{S,O}$.
\end{theorem}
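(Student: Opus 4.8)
The plan is to establish the chain of equivalences in a cycle, and then handle the last sentence separately. Since many of the conditions are clearly related, I would organize the argument around the action of $O = \langle \phi_1, \dots, \phi_n \rangle$ on $S$ by graded automorphisms and the identification $Z = S^O$. First I would record the basic dictionary: each $\phi_i$ acts diagonally on the generators, so the $x_i^{m}$-monomials span the eigenspaces of $O$, and a monomial $x_1^{d_1}\cdots x_n^{d_n}$ is $O$-invariant precisely when $\sum_s d_s b_{st} \equiv 0 \pmod \ell$ for all $t$. From this, $\mff_i$ as defined in \eqref{E0.3.1} is the smallest positive power of $x_i$ that can be completed to a central monomial, and condition (9) says that $x_i^{\mff_i}$ itself (with no other variables) is central. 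The implications (2) $\Leftrightarrow$ (3) $\Leftrightarrow$ (9) should then be nearly immediate, since $\kk[x_1^{\mff_1}, \dots, x_n^{\mff_n}] \subseteq Z$ always holds once (9) is known, and it is a polynomial ring on the nose; conversely if $Z$ is generated by powers of the $x_i$ then minimality forces those powers to be the $\mff_i$.

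The heart of the matter is to connect the "$Z$ regular" conditions (1), (4), (5) with the purely group-theoretic conditions (6), (7), (8). Here I would invoke the noncommutative Watanabe/Shephard--Todd--Chevalley-type machinery for the pair $(S, O)$: since $S$ is Artin--Schelter regular and $O$ acts by graded automorphisms with $S^O = Z$, the ring $Z$ is AS regular (equivalently a polynomial ring, since it is commutative and connected graded) if and only if $O$ acts as a reflection group on $S$, which gives (1) $\Leftrightarrow$ (5). The freeness statement (4) plugs in through the standard fact that $S$ is free over $S^O$ iff the Hilbert series factor correctly, iff $\rk(S_Z) = |O|$ and $S^O$ is regular; this is where (6) and (7) enter, since $|O| = \prod_{i=1}^n o(\phi_i) = \ell^n / \prod_s \gcd\{b_{1s}, \dots, b_{ns}, \ell\}$ exactly when $O$ splits as the direct product $\prod_s \langle \phi_s\rangle$, giving (6) $\Leftrightarrow$ (7) $\Leftrightarrow$ (8). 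The computation $o(\phi_s) = \ell/\gcd\{b_{1s},\dots,b_{ns},\ell\}$ is already recorded in Hypothesis~\ref{zzhyp0.3}'s discussion, and one checks $\mff_s = o(\phi_s)$ under these hypotheses, so $\prod \mff_i = |O|$ becomes the bridge between (7) and the regularity side. To close the loop I would argue (8) $\Rightarrow$ (9): if $O \cong \prod_s \langle \phi_s \rangle$, then the invariants of $\langle \phi_i \rangle$ acting on $S$ are generated by $x_i^{o(\phi_i)}$ together with the other variables' monomials, and intersecting over $i$ forces $x_i^{\mff_i} \in Z$ directly; and (9) $\Rightarrow$ (2) is trivial with $w_i = \mff_i$.

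For the final assertion, once $Z$ is regular we know $Z = \kk[x_1^{\mff_1}, \dots, x_n^{\mff_n}]$ and $S$ is free over $Z$ of rank $|O| = \prod \mff_i$. The quantities $\fj_{S,O}$ and $\fa_{S,O}$ from Definition~\ref{zzdef1.6} are (in the terminology of \cite{KZ}) the Jacobian and the arrangement associated to the reflection action: $\fj_{S,O}$ is built from the powers of the reduced-trace data, and $\fa_{S,O}$ is the product of the "reflection hyperplanes." Since $O$ is now the honest direct product of the cyclic groups $\langle \phi_i \rangle$, each of which fixes the hyperplane $x_i = 0$ with pseudo-reflection order dividing $\mff_i$, the classical formulas give $\fj_{S,O} = \prod_i x_i^{\mff_i - 1} = \oj_S$ and $\fa_{S,O} = \prod_{\mff_i > 1} x_i = \oa_S$ by comparing the exponents variable by variable. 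I expect the main obstacle to be the bookkeeping in (6)--(8): namely verifying carefully that $|O| = \prod_s o(\phi_s)$ forces the internal direct product decomposition (the non-obvious direction), which amounts to showing the natural surjection $\prod_s \langle \phi_s \rangle \twoheadrightarrow O$ is injective exactly when the orders multiply — this is elementary group theory but needs the correct counting of $\rk(S_Z)$ via Hilbert series of $S$ over $Z$, and one must be careful that "regular" is used in the connected-graded sense throughout so that $Z$ regular $\Leftrightarrow$ $Z$ a polynomial ring is legitimate.
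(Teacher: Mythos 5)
Your outline gets the cheap parts right ((2)$\Leftrightarrow$(3)$\Leftrightarrow$(9), (1)$\Leftrightarrow$(5) via the noncommutative Shephard--Todd--Chevalley theorem, (1)$\Leftrightarrow$(4), and (6)$\Leftrightarrow$(8) by counting the surjection $\prod_s\langle\phi_s\rangle\twoheadrightarrow O$), and these match the paper. But the bridge you propose between the group-theoretic conditions and the center-side conditions is broken. You assert that ``one checks $\mff_s = o(\phi_s)$ under these hypotheses''; under Hypothesis~\ref{zzhyp0.3} alone this is false. For $n=3$, Lemma~\ref{lem.fhyper} gives $\mff_1=\gcd(b_{23},\ell)$ while $o(\phi_1)=\ell/\gcd(b_{12},b_{13},\ell)$; the paper's own example with $\ell=24$ and $(b_{12},b_{13},b_{23})=(4,6,3)$ has $\mff=(3,6,4)$ but $(o(\phi_1),o(\phi_2),o(\phi_3))=(12,24,8)$. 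In fact, since $x_i^m\in Z$ exactly when $o(\phi_i)\mid m$ and $\mff_i$ always divides $o(\phi_i)$, the equality $\mff_i=o(\phi_i)$ for all $i$ is precisely condition (9); using it as an unconditional fact to get (6)$\Leftrightarrow$(7) assumes what is to be proved. Your argument for (8)$\Rightarrow$(9) has the same root error: $\phi_i$ fixes $x_i$ (as $b_{ii}=0$) and scales the \emph{other} variables by $p_{is}$, so $S^{\langle\phi_i\rangle}$ is not ``generated by $x_i^{o(\phi_i)}$ together with the other variables''; that description fits the one-variable reflections $r_i$ of Proposition~\ref{zzpro2.6}, not the generators $\phi_i$, which are typically not reflections even when $Z$ is regular. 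The same conflation infects your final paragraph, where $\langle\phi_i\rangle$ is treated as fixing the hyperplane $x_i=0$.

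What is missing is the counting input the paper uses to cross between the two sides: the identity $|O|=\rk(S_Z)$ (Lemma~\ref{zzlem0.2}, from \cite{CGWZ2}, after reducing to $\kk$ algebraically closed), combined with Cohen--Macaulayness of $Z$ (resp.\ $Z$ being a direct summand). For instance, (6)$\Rightarrow$(2) is proved by noting $x_i^{w_i}\in Z$ for $w_i=o(\phi_i)$, setting $C=\kk[x_1^{w_1},\dots,x_n^{w_n}]$, and comparing $\rk(S_C)=\prod_i w_i$ with $\rk(S_Z)=|O|$ to force $\rk(Z_C)=1$ and hence $C=Z$; the equivalence (3)$\Leftrightarrow$(7) is a similar rank comparison. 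You also never supply a link from (5) to (3) or (9) --- the paper gets (5)$\Rightarrow$(3) from the structure of the reflection subgroup $H$ in Proposition~\ref{zzpro2.6} (when $O=H$, $S^H=\kk[x_1^{\mff_1},\dots,x_n^{\mff_n}]$) --- so even setting aside the false bridge, your cycle of implications does not close. For the last sentence of the theorem, the correct route is Theorem~\ref{zzthm2.7} for $\oj_S=\fj_{S,O}$ and the free decomposition \eqref{E2.7.1} for $\oa_S=\fa_{S,O}$, carried out with the genuine reflections $r_i$ (each scaling a single $x_i$, of order $\mff_i$), not with the $\phi_i$.
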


For small $n$, we have the following theorem in terms 
of the parameters $\bp$ or the matrix $B$. This result 
makes use of the Smith normal form of the matrix $B$.

\begin{theorem}
\label{zzthm0.7}
Assume Hypothesis~\ref{zzhyp0.3}.
\begin{enumerate}
\item[(1)] 
Let $n=2$. Then $Z = \kk[x_1^{\ell}, x_2^{\ell}]$ so $Z$ 
is regular.
\item[(2)] 
Let $n=3$. Then $Z$ is regular if and only if the orders 
of $\{p_{ij}\}_{i<j}$ are pairwise coprime.
\item[(3)] 
Let $n=4$. Then $Z$ is regular if and only if the orders 
of $\{q_{ij}\}_{i<j}$ are pairwise coprime, where the 
$q_{ij}$ are defined in Corollary \ref{zzcor4.8}. 
\end{enumerate}
\end{theorem}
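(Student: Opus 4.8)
The plan is to deduce all three parts from the structural equivalences in Theorem~\ref{zzthm0.6}, using the Smith normal form of $B$ as the main computational device. The key observation is that condition (6) of Theorem~\ref{zzthm0.6} expresses regularity of $Z$ as the equality $|O| = \ell^n / \prod_s \gcd\{b_{1s},\dots,b_{ns},\ell\}$, and that the right-hand side, as well as $|O|$ itself, can be read off from the invariant factors of $B$ over $\ZZ$ (or rather from the image $\overline B$ over $\overline\ZZ$). Concretely, if $U B V = \diag(d_1,\dots,d_n)$ with $U,V \in \GL_n(\ZZ)$ is the Smith normal form, then one should first establish the clean statements: (i) $|O|$ equals $\prod_{s} \big(\ell/\gcd(d_s,\ell)\big)$ up to an appropriate bookkeeping, since $O$ is (the image of) the cokernel-type group attached to $\overline B$ acting on $(\overline\ZZ)^n$; and (ii) the ``local'' quantity $\prod_s \gcd\{b_{1s},\dots,b_{ns},\ell\}$ — which is not Smith-invariant by itself — can nevertheless be compared to $\prod_s \gcd(d_s,\ell)$. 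The inequality $|O| \le \ell^n/\prod_s \gcd\{b_{1s},\dots,b_{ns},\ell\}$ always holds, and Theorem~\ref{zzthm0.6}(6) says regularity is exactly the case of equality; I would phrase the whole argument as: equality holds if and only if a certain divisibility/coprimality condition on the invariant factors is met.

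For part (1), $n=2$: here $B = \begin{pmatrix} 0 & b \\ -b & 0\end{pmatrix}$ with $\gcd(b,\ell)=1$ by the minimality of $\ell$ in Hypothesis~\ref{zzhyp0.3}, so $b$ is a unit mod $\ell$; then $\phi_1,\phi_2$ each have order $\ell$, $O \cong \langle\phi_1\rangle \times \langle \phi_2\rangle$ has order $\ell^2$, and one checks directly that $x_1^\ell, x_2^\ell$ are central and generate the fixed ring. This matches Theorem~\ref{zzthm0.6}(2),(3) with $\mff_1 = \mff_2 = \ell$. For part (2), $n=3$: with $B$ skew-symmetric of size $3$, its nonzero entries are $b_{12}, b_{13}, b_{23}$ with orders (mod $\ell$) $\ell/\gcd(b_{12},\ell)$, etc. I would compute the Smith normal form of $\overline B$ — for a $3\times 3$ skew matrix the invariant factors are governed by $\gcd$ of the entries and $\gcd$ of the $2\times2$ minors, and one uses $\pf$-type identities (for odd $n$ the determinant is zero, so the third invariant factor is $0$) — and show that the equality in Theorem~\ref{zzthm0.6}(6) reduces precisely to: the three numbers $\gcd(b_{ij},\ell)$ are pairwise coprime after dividing out, equivalently the orders of $p_{12},p_{13},p_{23}$ are pairwise coprime. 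The pairwise-coprimality is what makes the product of local gcd's collapse to match $\ell^3/|O|$; any shared prime factor between two of the orders forces a strict inequality and hence a non-polynomial center. For part (3), $n=4$: this is the same strategy but now $B$ has a nonzero Pfaffian in general, and the four invariant factors of $\overline B$ come in the pattern $(e_1, e_1 e_2', \ldots)$ reflecting $\det B = \pf(B)^2$; after a unimodular change of variables $\overline B$ is block-diagonalized into two $2\times 2$ skew blocks with parameters $q_{ij}$ (this is the content of Corollary~\ref{zzcor4.8}, which I would invoke as a black box), and regularity becomes pairwise coprimality of the orders of the $q_{ij}$, exactly as in the $n=3$ case but applied to the transformed parameters.

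The main obstacle I anticipate is the non-invariance of the ``local'' product $\prod_s \gcd\{b_{1s},\dots,b_{ns},\ell\}$ under the column/row operations used to reach Smith normal form: multiplying $B$ by unimodular $U$ and $V$ changes this product, even though it leaves $|O|$, $\det B$, and the invariant factors alone. So the heart of the proof is the lemma that \emph{equality} in Theorem~\ref{zzthm0.6}(6) is itself a Smith-invariant statement — i.e., holds for $B$ iff it holds for any matrix congruent/equivalent to $B$ in the relevant sense — which should follow because equality is equivalent to the intrinsic conditions (7) or (8) of Theorem~\ref{zzthm0.6} ($O$ splits as a product of the cyclic groups $\langle\phi_s\rangle$), and those conditions can be rephrased purely in terms of how $(\overline\ZZ)^n$ decomposes under the map given by $\overline B$. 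Once that is pinned down, parts (2) and (3) are finite computations with $3\times 3$ and $4\times 4$ skew-symmetric integer matrices, and translating ``invariant factors pairwise coprime'' into ``orders of $p_{ij}$ (resp.\ $q_{ij}$) pairwise coprime'' is routine number theory; part (3) additionally requires only that one correctly cites the change of variables producing the $q_{ij}$ from Corollary~\ref{zzcor4.8}.
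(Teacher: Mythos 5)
There is a genuine gap, and it sits exactly where you located ``the heart of the proof.'' The lemma you rely on --- that equality in Theorem~\ref{zzthm0.6}(6) (equivalently, regularity of $Z$) is a Smith-invariant statement, testable on any matrix equivalent or congruent to $B$ --- is false. Take $n=3$, $\ell=2$, and compare $b_{12}=b_{13}=b_{23}=1$ with $b_{12}=1$, $b_{13}=b_{23}=0$. Both matrices satisfy Hypothesis~\ref{zzhyp0.3}, both have Smith normal form $\diag(1,1,0)$, and they are in fact congruent over $\ZZ$; in both cases $|O|=4$. Yet in the first case $S=\kk_{-1}[x_1,x_2,x_3]$ is Calabi--Yau, the orders of the $p_{ij}$ are $2,2,2$, and $Z$ is a hypersurface, not regular (the equality of Theorem~\ref{zzthm0.6}(6) reads $4\neq 2^3/1$), while in the second case $Z=\kk[x_1^2,x_2^2,x_3]$ is regular (and $4=2^3/2$). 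The reason is that the data entering Theorem~\ref{zzthm0.6}(6)--(9) --- the column gcds $\gcd\{b_{1s},\dots,b_{ns},\ell\}$, the integers $\mff_i$, and the condition $x_i^{\mff_i}\in Z$ --- are attached to the fixed generating set $\{x_1,\dots,x_n\}$, i.e.\ to the standard basis of the exponent lattice; a unimodular change of that lattice is a monomial change of variables of the quantum torus, not an isomorphism of $\SP$, so it need not preserve regularity of $Z$. Your proposed rescue, that conditions (7) and (8) ``can be rephrased purely in terms of how $\overline{\ZZ}^n$ decomposes under $\overline B$,'' fails for the same reason: condition (8) refers to the specific generators $\phi_s$, which are the images of the standard basis vectors.

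This is precisely why the paper never leaves the original coordinates: in Remark~\ref{xxrem4.2} and Proposition~\ref{zzpro4.3} the Smith normal form is used only to compute, prime by prime, generators of the kernel $K_{(p)}$ of $\overline B$ (solve $D\bv\equiv 0$ and push forward by $R$), after which the basis-dependent criterion $\mff_i\be_i\in K_{(p)}$ of Lemma~\ref{zzlem4.1}(2) is tested in Propositions~\ref{zzpro4.4} and~\ref{zzpro4.5} and globalized via Lemma~\ref{zzlem4.6} to yield Corollaries~\ref{zzcor4.7} and~\ref{zzcor4.8}; Theorem~\ref{zzthm0.7} is then a citation of those corollaries. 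Two further problems with your $n=4$ step: Corollary~\ref{zzcor4.8} does not assert a unimodular block-diagonalization of $\overline B$ into two $2\times 2$ skew blocks carrying the $q_{ij}$ --- the $q_{ij}$ are defined entrywise by $q_{ij}=\omega^{\gcd(b_{ij},\rho)}$ with $\rho=\gcd(\ell,\pf(B))$ --- and invoking that corollary as a black box is circular, since its statement is exactly Theorem~\ref{zzthm0.7}(3). Your $n=2$ case is fine, and your intended endpoint for $n=3$ is the correct one, but the route through Smith-invariance of Theorem~\ref{zzthm0.6}(6) does not work as written.
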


\begin{remark}
\label{zzrem0.8}
In the case where $\ell\mid \pf(B)$, we have $q_{ij}=p_{ij}$, 
so the conditions for $Z$ to be regular for $n=3$ and $4$ 
look similar. 
\end{remark}

For arbitrary $n$, we have the following interesting partial
result. Let $o(p_{ij})$ denote the order of the root of unity $p_{ij}$. 

\begin{theorem}
\label{zzthm0.9}
Assume Hypothesis~\ref{zzhyp0.3}. Suppose the orders  
$\{o(p_{ij})\}_{i<j}$ are pairwise coprime. Then $Z$ is 
regular. As a consequence,
\begin{enumerate}
\item[(1)]
$\mff_i=\prod_{j}o(p_{ij})=\prod_{j\neq i} o(p_{ij})$
for all $i$.
\item[(2)]
$|O|=\ell^2$.
\end{enumerate}
\end{theorem}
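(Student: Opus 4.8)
The plan is to reduce everything to an explicit description of the set of exponent vectors of central monomials. Write $\ell_{ij}:=o(p_{ij})$ for $i\neq j$ and $\ell_{kk}:=1$. Since $\xi$ is a primitive $\ell$th root of unity and $\ell$ is minimal, $\ell=\lcm_{i<j}\ell_{ij}$, and the pairwise-coprimeness hypothesis upgrades this to $\ell=\prod_{i<j}\ell_{ij}$. As $p_{st}=\xi^{b_{st}}$ we have $\gcd(b_{st},\ell)=\ell/\ell_{st}=\prod_{\{k,l\}\neq\{s,t\}}\ell_{kl}$, and the key preliminary fact is therefore that $\ell_{ij}\mid b_{st}$ whenever $\{i,j\}\neq\{s,t\}$, while $b_{ij}$ is a unit modulo $\ell_{ij}$. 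This divisibility is the only place where the hypothesis is really used.

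First I would identify the central monomials. Since each $\phi_s$ is diagonal with $\phi_s(x^d)=\xi^{\sum_t b_{st}d_t}\,x^d$, where $x^d:=x_1^{d_1}\cdots x_n^{d_n}$, a monomial $x^d$ (with $d\in\ZZ_{\geq 0}^n$) lies in $Z=S^O$ precisely when $Bd\equiv 0\pmod{\ell}$. By the Chinese Remainder Theorem and $\ell=\prod_{i<j}\ell_{ij}$ this is equivalent to $Bd\equiv 0\pmod{\ell_{ij}}$ for every pair $i<j$. For a fixed pair, the preliminary fact kills every entry of $B$ modulo $\ell_{ij}$ except $b_{ij}=-b_{ji}$ (a unit there), so this congruence system collapses to $\ell_{ij}\mid d_i$ and $\ell_{ij}\mid d_j$. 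Letting $i<j$ vary, $x^d$ is central iff $\ell_{kj}\mid d_k$ for all $k$ and all $j\neq k$; since $\{\ell_{kj}\}_{j\neq k}$ is itself a pairwise-coprime family, this says exactly $c_k\mid d_k$ for all $k$, where $c_k:=\prod_{j\neq k}\ell_{kj}$. In short,
\[
x^d\in Z \iff c_k\mid d_k \text{ for all } k,
\]
so the exponent monoid of $Z$ is the ``rectangular'' monoid $c_1\ZZ_{\geq0}\times\cdots\times c_n\ZZ_{\geq0}$.

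Then I would read off the three conclusions. From the displayed equivalence, $x_k^{c_k}\in Z$ for every $k$, and every central monomial has $k$th exponent divisible by $c_k$; comparing with the definition \eqref{E0.3.1} gives $\mff_k=c_k=\prod_{j\neq k}o(p_{kj})=\prod_j o(p_{kj})$ (the extra factor $o(p_{kk})=1$), which is part~(1). In particular $x_k^{\mff_k}\in Z$ for all $k$, which is condition~(9) of Theorem~\ref{zzthm0.6}, so $Z$ is regular. Then Theorem~\ref{zzthm0.6} also yields $|O|=\prod_{k=1}^n\mff_k=\prod_{k}\prod_{j\neq k}\ell_{kj}=\prod_{i<j}\ell_{ij}^{2}=\ell^2$, which is part~(2). (Alternatively, the displayed equivalence shows directly that $Z=\kk[x_1^{c_1},\dots,x_n^{c_n}]$, a polynomial ring, since the $x_k^{c_k}$ are mutually commuting and algebraically independent in $S$.)

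I do not expect a serious obstacle. The one point requiring genuine care is the preliminary divisibility $\ell_{ij}\mid b_{st}$ for $\{i,j\}\neq\{s,t\}$ together with $b_{ij}$ being a unit modulo $\ell_{ij}$; granting this, the reduction of the centrality system modulo each $\ell_{ij}$ is immediate and everything else is bookkeeping, the only subtlety being to notice that, as a subset of the pairwise-coprime family $\{\ell_{ij}\}_{i<j}$, the family $\{\ell_{kj}\}_{j\neq k}$ is pairwise coprime, so that the $n-1$ conditions on $d_k$ fuse into the single condition $c_k\mid d_k$.
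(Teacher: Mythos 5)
Your proposal is correct. It rests on the same key mechanism as the paper's proof — pairwise coprimeness of the orders forces the centrality congruence $B\bu\equiv 0\pmod{\ell}$ to decouple, so that locally the only surviving entries of $B$ are a single pair $\pm b_{ij}$ which are units — but the execution differs. The paper localizes at each prime $p\mid\ell$, observes that $\overline{B}_{(p)}$ has rank $2$ (one distinguished $2\times 2$ block), reads off generators of $K_{(p)}$ and the $p$-valuations of the $\mff_i$, and concludes via Lemma~\ref{zzlem4.1}(2); part (1) is then extracted from the formula \eqref{E4.1.1} inside the proof of Theorem~\ref{zzthm0.6}, and part (2) from Theorem~\ref{zzthm0.6}(7) together with $\ell=\prod_{i<j}o(p_{ij})$. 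You instead decompose by the Chinese Remainder Theorem along the pairwise coprime factors $\ell_{ij}=o(p_{ij})$ themselves (using $\ell=\prod_{i<j}\ell_{ij}$ and the divisibility $\ell_{ij}\mid b_{st}$ for $\{s,t\}\neq\{i,j\}$, with $b_{ij}$ a unit mod $\ell_{ij}$ — both correctly justified), which gives the explicit description of the full monoid of central exponent vectors as $\prod_k c_k\ZZ_{\geq 0}$ with $c_k=\prod_{j\neq k}\ell_{kj}$. This buys you part (1) directly ($\mff_k=c_k$ by the two inclusions $x_k^{c_k}\in Z$ and $c_k\mid d_k$ for every central monomial), and regularity then follows from Theorem~\ref{zzthm0.6}(9) (or, as you note, outright from $Z=\kk[x_1^{c_1},\dots,x_n^{c_n}]$), with part (2) by the same counting as the paper. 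Your route is slightly more self-contained and explicit about $Z$; the paper's prime-by-prime formulation fits the Smith-normal-form framework it sets up in Remark~\ref{xxrem4.2} and reuses for the $n=3,4$ classifications. No gaps.
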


Using the above theorem, one can easily construct many examples of skew polynomial rings with regular centers. We are not aware of any analogous result for 
other families of Artin--Schelter regular algebras.

\subsection{Gorenstein center}
\label{zzsec0.3}
In Section~\ref{zzsec5} we study the question of when $Z:=\ZSP$ 
is Gorenstein. By a theorem of Watanabe \cite[Theorem 1]{W1}, 
the invariant ring of $\kk[x_1,\hdots,x_n]$ by a finite linear 
group $K$ is Gorenstein when $K \subseteq \SL_n(\kk)$. There is 
a suitable replacement for the group $\SL_n(\kk)$ in the 
noncommutative setting using the \emph{homological determinant} 
$\hdet: K \to \kk^\times$, as introduced by Jorgensen and the 
fourth author \cite{JoZ}. We refer to that reference for the 
full generalization, as computing $\hdet$ in the case of the 
elements of $\Oz(S)$ acting on $S$ is relatively trivial
[Example~\ref{zzex1.4}]. 

\begin{theorem}\label{zzthm0.10}
Assume Hypothesis~\ref{zzhyp0.3}. 
The following are equivalent.
\begin{enumerate}
\item[(1)]
$S$ is Calabi--Yau {\rm{(}}see Definition~\ref{zzdef1.2}{\rm{)}}.
\item[(2)]
The $O$-action on $S$ has trivial homological determinant.
\item[(3)]
$\prod_{s=1}^n p_{is}=1$ for all $i=1,\dots,n$.
\item[(4)]
$\pg_S\in Z$.
\end{enumerate}
As a consequence, if one of the above holds, then $Z$ is 
Gorenstein and all statements in Theorems~\ref{zzthm0.4} 
and~\ref{zzthm0.11} hold.
\end{theorem}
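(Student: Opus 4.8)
The plan is to establish the chain of equivalences (1)$\Leftrightarrow$(2)$\Leftrightarrow$(3)$\Leftrightarrow$(4), and then deduce the consequence about $Z$ being Gorenstein from Watanabe's theorem (in its homological-determinant form). First I would handle the easy parametric equivalences. The equivalence (3)$\Leftrightarrow$(4) is a direct computation: $\pg_S = x_1\cdots x_n$ lies in $Z$ if and only if each generator $\phi_i$ of $O$ fixes it, and since $\phi_i(x_s) = p_{is}x_s$ we get $\phi_i(\pg_S) = \big(\prod_{s=1}^n p_{is}\big)\pg_S$, so $\pg_S \in S^O = Z$ exactly when $\prod_{s=1}^n p_{is}=1$ for all $i$. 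Note that the $i=s$ term is $p_{ii}=1$ by the convention $b_{ii}=0$, so the product over all $s$ agrees with the product over $s\neq i$.

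Next I would connect this to the homological determinant. By the reference to Example~\ref{zzex1.4}, computing $\hdet$ for a diagonal automorphism of a skew polynomial ring is straightforward: if $g(x_i) = \lambda_i x_i$ then $\hdet(g) = \prod_{i=1}^n \lambda_i$ (the same formula as the ordinary determinant of the restriction to $\bigoplus \kk x_i$, since $S$ is AS-regular with the obvious Nakayama/Koszul structure). Applying this to $g = \phi_i$ gives $\hdet(\phi_i) = \prod_{s=1}^n p_{is}$, so the $O$-action has trivial homological determinant if and only if $\hdet(\phi_i)=1$ for the generators $\phi_i$, which is precisely condition (3). This gives (2)$\Leftrightarrow$(3). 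For (1)$\Leftrightarrow$(2): $S$ is AS-regular of global dimension $n$, so it is Calabi--Yau precisely when its Nakayama automorphism is trivial (or inner, but for skew polynomial rings this forces it to be trivial since the relevant automorphism is diagonal); and the Nakayama automorphism of $S$ is diagonal with the scalar on $x_i$ being exactly $\prod_s p_{si}$ (a standard computation for skew polynomial rings, e.g.\ via the Koszul bimodule resolution). Since $\prod_s p_{si} = \prod_s p_{is}^{-1}$, this vanishes to the identity exactly under condition (3); alternatively, one invokes the general principle that for $A\# O$-type situations the homological determinant being trivial is equivalent to the fixed ring inheriting the Gorenstein/Calabi--Yau property, but the cleanest route is the direct Nakayama automorphism computation.

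For the consequence, once (1)--(4) hold, $Z = S^O$ with $O$ a finite group acting on the AS-regular (indeed AS-Gorenstein) algebra $S$ with trivial homological determinant, so by the noncommutative Watanabe theorem of \cite{JoZ} (the $\hdet$-analogue of $K\subseteq \SL_n$), $Z$ is AS-Gorenstein, hence Gorenstein since it is commutative affine. The claim that ``all statements in Theorems~\ref{zzthm0.4} and~\ref{zzthm0.11} hold'' I would justify by noting that under condition (4), $\pg_S = x_1\cdots x_n \in Z$, and then for each $i$ this is an element of $Z$ of the form $x_1^{a_1}\cdots x_i\cdots x_n^{a_n}$ with all $a_j=1$, which is exactly condition (7) of Theorem~\ref{zzthm0.4}; hence all the equivalent conditions of Theorem~\ref{zzthm0.4} hold, in particular $\oj_S=1$ and $\mff_i=1$ for all $i$. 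The corresponding statements in Theorem~\ref{zzthm0.11} (not reproduced in this excerpt) should follow similarly from $\pg_S\in Z$ together with the Gorenstein conclusion.

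The main obstacle I anticipate is pinning down the precise relationship between the Calabi--Yau property, the Nakayama automorphism, and the homological determinant of the $O$-action with the correct normalization — in particular making sure the direction of the various products of $p_{is}$ (rows versus columns, inverses) is consistent across the three conditions, and confirming that ``trivial $\hdet$ of the $O$-action'' is equivalent to ``$\hdet(\phi_i)=1$ for all generators'' (which holds because $\hdet$ is a homomorphism $O\to\kk^\times$ and the $\phi_i$ generate $O$). Once Example~\ref{zzex1.4} is invoked for the $\hdet$ computation and the standard formula for the Nakayama automorphism of a skew polynomial ring is in hand, the rest is bookkeeping.
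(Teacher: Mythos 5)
Your proof of the equivalences (1)--(4) is essentially the paper's: (2)$\Leftrightarrow$(3) via the diagonal $\hdet$ computation of Example~\ref{zzex1.4} (the paper packages this as Lemma~\ref{zzlem1.5}(2)), (3)$\Leftrightarrow$(4) by the same direct centrality computation, and (1)$\Leftrightarrow$(3) via the explicit Nakayama automorphism of a skew polynomial ring (the paper cites \cite{LWW}; your $\prod_s p_{si}$ versus $\prod_s p_{is}$ bookkeeping is harmless since the two are inverse, and since $S$ is connected graded there is no issue with inner automorphisms). Where you genuinely diverge is the ``consequence'': the paper invokes Zhu's theorem \cite{Zhu22} (trivial homological determinant implies Auslander's Theorem for $(S,O)$) to get all of Theorem~\ref{zzthm0.4}, concludes $H$ is trivial, and then reads off Theorem~\ref{zzthm0.11}(2) (so $Z$ Gorenstein comes via \cite{KKZ4}); you instead observe that $\pg_S\in Z$ directly witnesses Theorem~\ref{zzthm0.4}(7), hence $\mff_i=1$ and $\oj_S=1$, so $\oj_S\,\pg_S=\pg_S\in Z$ gives Theorem~\ref{zzthm0.11}(4), and you get Gorensteinness of $Z$ from the noncommutative Watanabe theorem \cite{JoZ}. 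Your route is more elementary and self-contained, using only the already-proved equivalences of Theorems~\ref{zzthm0.4} and~\ref{zzthm0.11} (plus \cite{JoZ}, which is not even strictly needed once \ref{zzthm0.11}(4) is in hand), while the paper's citation of \cite{Zhu22} is a quicker one-line appeal. One small caution: your parenthetical that trivial $\hdet$ is ``equivalent'' to the fixed ring inheriting the Gorenstein property is not correct in general (only the forward implication holds without a smallness/reflection hypothesis), but you do not rely on it, so nothing breaks.
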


When the $O$-action has trivial homological determinant, then 
$Z$ is Gorenstein \cite[Theorem 3.3]{JoZ}. More generally, 
we let $H$ denote the subgroup of $O$ generated by reflections. 
Then $O/H$ acts on $S^H$. By \cite[Theorem 0.2]{KKZ4}, $Z=S^O$ 
is Gorenstein if and only if the $O/H$-action has trivial 
homological determinant. By a result of Kirkman and the fourth 
author \cite[Theorem 2.4]{KZ}, we can show that $Z$ is 
Gorenstein if and only if the ozone Jacobian $\oj_S$ is equal 
to the Jacobian $\fj_{S,O}$ defined in \cite[Definition 0.3]{KZ} 
(also see Definition~\ref{zzdef1.6}). We summarize these results 
as follows.

\begin{theorem}
\label{zzthm0.11}
Assume Hypothesis~\ref{zzhyp0.3} and let $H$ denote the subgroup 
of $O$ generated by reflections. The following are equivalent.
\begin{enumerate}
\item[(1)]
$Z$ is Gorenstein.
\item[(2)]
The $O/H$-action on $S^H$ has trivial homological determinant.
\item[(3)]
$\phi_i(\oj_S\; \pg_S)=\oj_S\; \pg_S$, or 
$\prod_{s=1}^n p_{is}^{\mff_s}=1$ for all $i$.
\item[(4)]
$\oj_S\; \pg_S\in Z$.
\end{enumerate}
\end{theorem}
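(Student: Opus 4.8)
The plan is to establish the chain of equivalences $(1)\Leftrightarrow(2)\Rightarrow(3)\Leftrightarrow(4)\Rightarrow(1)$, leaning heavily on the machinery already set up in the excerpt and on the cited results of Kirkman--Zhang \cite{KZ} and Kirkman--Kuzmanovich--Zhang \cite{KKZ4}. First I would record the equivalence $(1)\Leftrightarrow(2)$: since $H\trianglelefteq O$ is generated by the reflections of $O$, the invariant ring $S^H$ is again an Artin--Schelter regular (in fact skew polynomial) algebra by the Shephard--Todd--Chevalley-type theorem used for Theorem~\ref{zzthm0.6}, and $Z=S^O=(S^H)^{O/H}$ where $O/H$ acts without reflections on $S^H$. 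Then \cite[Theorem 0.2]{KKZ4} says exactly that $(S^H)^{O/H}$ is Gorenstein if and only if the $O/H$-action has trivial homological determinant, giving $(1)\Leftrightarrow(2)$.

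Next I would handle $(3)\Leftrightarrow(4)$, which is the purely bookkeeping step. By Hypothesis~\ref{zzhyp0.3}, $\phi_i$ acts diagonally with $\phi_i(x_s)=p_{is}x_s$, so $\phi_i(\oj_S\,\pg_S)=\phi_i\!\left(\prod_s x_s^{\mff_s}\right)=\prod_s p_{is}^{\mff_s}\,\oj_S\,\pg_S$ using \eqref{E0.3.2} and the definition of $\pg_S$. Since $Z=S^O$ and $O$ is generated by $\phi_1,\dots,\phi_n$, a monomial lies in $Z$ iff it is fixed by every $\phi_i$; hence $\oj_S\,\pg_S\in Z$ iff $\prod_{s=1}^n p_{is}^{\mff_s}=1$ for all $i$, which is $(3)\Leftrightarrow(4)$. (This also makes the two formulations in $(3)$ literally the same statement.)

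The substantive link is $(2)\Leftrightarrow(4)$ (equivalently $(4)\Rightarrow(1)\Rightarrow(4)$ via the above), and this is where I expect to invoke \cite[Theorem 2.4]{KZ}. The idea is that the ozone Jacobian $\oj_S$ plays the role of the ``actual'' Jacobian of the extension $Z\subseteq S$, while $\fj_{S,O}$ is the ``expected'' Jacobian coming from the reflection arrangement of $O$ (Definition~\ref{zzdef1.6}); by \cite[Theorem 2.4]{KZ}, $Z$ is Gorenstein precisely when $\oj_S=\fj_{S,O}$ up to a scalar. I would then need to translate the equality $\oj_S=\fj_{S,O}$ into the condition $\oj_S\,\pg_S\in Z$. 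The point is that $\fj_{S,O}$ is always central (it is essentially a product over the reflection hyperplanes, hence $O$-semi-invariant with the inverse homological determinant, and combining with $\pg_S$ or using the normalization makes it genuinely invariant), and the homological determinant of $O$ on $S$ is computed by $\hdet(\phi_i)=\prod_s p_{is}$ as in Example~\ref{zzex1.4}; so $\fj_{S,O}\,\pg_S$ is automatically in $Z$, and the Gorenstein criterion $\oj_S=\fj_{S,O}$ is equivalent to $\oj_S\,\pg_S\in Z$. Concretely, one shows $\phi_i(\fj_{S,O}\,\pg_S)=\fj_{S,O}\,\pg_S$ always, and $\phi_i(\oj_S\,\pg_S)=\hdet(\phi_i)^{-1}\cdot(\text{something})$ in a way that forces the two to agree exactly when both are central.

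The main obstacle will be the last translation: making precise the relationship between $\fj_{S,O}$, $\oj_S$, the reflection subgroup $H$, and the homological determinant, so that \cite[Theorem 2.4]{KZ} and \cite[Theorem 0.2]{KKZ4} can be glued together cleanly. In particular one must verify that passing to $S^H$ does not lose information — i.e., that the reflections of $O$ all ``come from'' the monomial structure in the way encoded by $\fj_{S,O}$, which is presumably the content of Section~\ref{zzsec2} — and that the homological determinant is multiplicative along $H\to O\to O/H$ so that ``$O/H$ has trivial $\hdet$ on $S^H$'' matches ``$\prod_s p_{is}^{\mff_s}=1$ for all $i$'' after the reflections have been factored out. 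Once those compatibilities are in hand, the four equivalences follow formally; the final sentence of the theorem then just re-packages $(1)\Leftrightarrow(4)$ together with the observation that $\oj_S=\fj_{S,O}$ is the Gorenstein criterion.
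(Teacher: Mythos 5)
Your steps (1) $\Leftrightarrow$ (2) (via \cite[Theorem 0.2]{KKZ4}) and (3) $\Leftrightarrow$ (4) (the diagonal computation $\phi_i(\oj_S\,\pg_S)=\prod_{s}p_{is}^{\mff_s}\,\oj_S\,\pg_S$ together with $Z=S^O$) are exactly the paper's, and are fine. The genuine gap is the remaining link, which you route through the Jacobian $\fj_{S,O}$ and \cite[Theorem 2.4]{KZ} and explicitly leave as an ``obstacle.'' That citation only says: $\fj_{S,O}$ exists if and only if $Z$ is Gorenstein; and Theorem~\ref{zzthm2.7} gives only the one-way statement that if $Z$ is Gorenstein then $\oj_S=\fj_{S,O}$. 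With these, (1) $\Rightarrow$ (4) does go through (since $\sigma(\fj_{S,O}\,\pg_S)=\hdet(\sigma)^{-1}\hdet(\sigma)\,\fj_{S,O}\,\pg_S=\fj_{S,O}\,\pg_S$ for all $\sigma\in O$), but the converse (4) $\Rightarrow$ (1) --- the direction that actually certifies Gorensteinness --- is not established: your sketch (``forces the two to agree exactly when both are central'') presupposes that $\fj_{S,O}$ exists, which by \cite[Theorem 2.4]{KZ} is equivalent to the Gorensteinness you are trying to prove, so as written the argument is circular. To repair it along your lines you would need to show, from $\oj_S\,\pg_S\in Z$ alone, that $S_{\hdet^{-1}}$ is a free rank-one $Z$-module generated by $\oj_S$ (e.g.\ via the decomposition \eqref{E2.7.1} and $S_{\hdet_H^{-1}}=\oj_S S^H$, then descending from $S^H$ to $Z$); your proposal does not do this.

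Relatedly, the compatibility you flag --- ``$\hdet$ is multiplicative along $H\to O\to O/H$'' --- is not the statement that is needed: $\hdet_{O/H}(\overline{\phi_i})$ is computed from the trace series of the induced action on $S^H$, not from any restriction of $\hdet_O(\phi_i)=\prod_s p_{is}$, and it equals $\prod_s p_{is}^{\mff_s}$. This is exactly where the paper's proof is much shorter than your plan: by Proposition~\ref{zzpro2.6}(2,3), $S^H=\kk_{\bq}[x_1^{\mff_1},\dots,x_n^{\mff_n}]$ is again a skew polynomial ring on which each $\overline{\phi_i}$ acts diagonally, so \eqref{E1.4.2} applied to $S^H$ gives $\hdet_{O/H}(\overline{\phi_i})=\overline{\phi_i}(x_1^{\mff_1}\cdots x_n^{\mff_n})\,(x_1^{\mff_1}\cdots x_n^{\mff_n})^{-1}=\prod_s p_{is}^{\mff_s}$; since $\oj_S\,\pg_S=x_1^{\mff_1}\cdots x_n^{\mff_n}$, this yields (2) $\Leftrightarrow$ (3) in one line, with no Jacobian and no appeal to Theorem~\ref{zzthm2.7}. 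I suggest replacing your middle step with this computation, or else supplying the missing rank-one freeness argument if you want to keep the $\fj_{S,O}$ route.
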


By the above theorem, the centrality of $\oj_S \pg_S$ serves 
an indicator for $Z$ being Gorenstein. Similarly, by 
Theorem~\ref{zzthm0.10}, the centrality of $\pg_S$ serves as 
an indicator for $S$ being Calabi--Yau. 
Using Theorems \ref{zzthm0.4}, \ref{zzthm0.6}, \ref{zzthm0.10},
and \ref{zzthm0.11}, one can easily show the following.

\begin{corollary}
\label{zzcor0.12}
Assume Hypothesis \ref{zzhyp0.3}. Then the following hold.
\begin{enumerate}
\item[(1)]
$S$ is Calabi--Yau if and only if $Z$ is Gorenstein and 
Auslander's Theorem holds for $(S ,O)$.
\item[(2)]
If $S$ is Calabi--Yau, then $Z$ is not regular.
\end{enumerate}
\end{corollary}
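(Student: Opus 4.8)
The plan is to deduce Corollary~\ref{zzcor0.12} directly from the four theorems it cites, treating them as black boxes. For part (1), the strategy is a clean logical equivalence: by Theorem~\ref{zzthm0.10}, $S$ is Calabi--Yau if and only if $\pg_S\in Z$; I want to show this is equivalent to the conjunction ``$Z$ is Gorenstein and Auslander's Theorem holds for $(S,O)$''. For the forward direction, the ``As a consequence'' clause of Theorem~\ref{zzthm0.10} already gives that $Z$ is Gorenstein and that all statements of Theorem~\ref{zzthm0.4} hold, in particular that Auslander's Theorem holds for $(S,O)$. For the reverse direction, I would use Theorem~\ref{zzthm0.4}(6): Auslander's Theorem for $(S,O)$ forces $\oj_S=1$ (equivalently $\mff_i=1$ for all $i$). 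Then $\oj_S\,\pg_S=\pg_S$, so the statement $\oj_S\,\pg_S\in Z$ of Theorem~\ref{zzthm0.11}(4) becomes precisely $\pg_S\in Z$; since $Z$ is assumed Gorenstein, Theorem~\ref{zzthm0.11} gives $\oj_S\,\pg_S\in Z$, hence $\pg_S\in Z$, and Theorem~\ref{zzthm0.10}(4) then yields that $S$ is Calabi--Yau.

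For part (2), I would argue by contradiction: suppose $S$ is Calabi--Yau and $Z$ is regular. Calabi--Yau gives $\pg_S=\prod_i x_i\in Z$ by Theorem~\ref{zzthm0.10}(4); it also gives $\oj_S=1$ via Theorem~\ref{zzthm0.4}(6) (since by Theorem~\ref{zzthm0.10} all statements of Theorem~\ref{zzthm0.4} hold), so $\mff_i=1$ for every $i$. On the other hand, $Z$ regular forces, via Theorem~\ref{zzthm0.6}(3), that $Z=\kk[x_1^{\mff_1},\dots,x_n^{\mff_n}]=\kk[x_1,\dots,x_n]$, i.e. $Z=S$. But $Z=S$ contradicts the noncommutativity of $S$, which is part of Hypothesis~\ref{zzhyp0.3} (there $\ell>1$). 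Alternatively, and perhaps more transparently, one can note that $\pg_S\in Z$ together with $Z=\kk[x_1^{\mff_1},\dots,x_n^{\mff_n}]$ and each $\mff_i=1$ forces $x_i\in Z$ for all $i$, again giving $Z=S$, a contradiction.

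I expect the main (very minor) obstacle is simply bookkeeping: making sure that in part (1) I invoke the correct implication of each theorem and that the chain $\text{Calabi--Yau}\Rightarrow\oj_S=1$ is routed through Theorem~\ref{zzthm0.10}'s assertion that ``all statements in Theorem~\ref{zzthm0.4} hold'' rather than re-deriving it. For part (2) I should double-check that Theorem~\ref{zzthm0.6}(3) is stated as an \emph{equivalence} with regularity of $Z$ (it is, as item (3) in that list), so that regularity genuinely forces $Z=\kk[x_1^{\mff_1},\dots,x_n^{\mff_n}]$; combined with $\mff_i=1$ this collapses $Z$ to $S$.

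\begin{proof}[Proof of Corollary~\ref{zzcor0.12}]
(1) Suppose $S$ is Calabi--Yau. By Theorem~\ref{zzthm0.10}, $Z$ is Gorenstein and all statements in Theorem~\ref{zzthm0.4} hold; in particular, by Theorem~\ref{zzthm0.4}(1), Auslander's Theorem holds for $(S,O)$.

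Conversely, suppose $Z$ is Gorenstein and Auslander's Theorem holds for $(S,O)$. By Theorem~\ref{zzthm0.4}(6), $\oj_S=1$, so $\oj_S\,\pg_S=\pg_S$. Since $Z$ is Gorenstein, Theorem~\ref{zzthm0.11}(4) gives $\oj_S\,\pg_S\in Z$, hence $\pg_S\in Z$. By Theorem~\ref{zzthm0.10}(4), $S$ is Calabi--Yau.

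(2) Suppose $S$ is Calabi--Yau. By Theorem~\ref{zzthm0.10}, all statements in Theorem~\ref{zzthm0.4} hold, so by Theorem~\ref{zzthm0.4}(6) we have $\mff_i=1$ for all $i$. If, in addition, $Z$ were regular, then by Theorem~\ref{zzthm0.6}(3) we would have
\[
Z=\kk[x_1^{\mff_1},\dots,x_n^{\mff_n}]=\kk[x_1,\dots,x_n]=S,
\]
contradicting the noncommutativity of $S$ (which holds since $\ell>1$ in Hypothesis~\ref{zzhyp0.3}). Therefore $Z$ is not regular.
\end{proof}
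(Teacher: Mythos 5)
Your proof is correct and follows exactly the route the paper intends: the paper gives no explicit argument for Corollary~\ref{zzcor0.12}, merely noting that it follows from Theorems~\ref{zzthm0.4}, \ref{zzthm0.6}, \ref{zzthm0.10}, and \ref{zzthm0.11}, and your derivation (Calabi--Yau $\Leftrightarrow \pg_S\in Z$, Auslander $\Leftrightarrow \oj_S=1$, Gorenstein $\Leftrightarrow \oj_S\,\pg_S\in Z$, regular $\Rightarrow Z=\kk[x_1^{\mff_1},\dots,x_n^{\mff_n}]$) is precisely that bookkeeping carried out correctly.
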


It is natural to ask 
if there are similar results for other classes of 
Artin--Schelter regular algebras. For example, if $A$ is 
a noetherian graded down-up algebra with parameters
$(\alpha,\beta)$, then there is a canonical element
$\fb\in A$ such that $A$ is Calabi--Yau if and only if 
$\fb\in Z(A)$ [Proposition~\ref{zzpro6.4}]. It would be 
interesting to know if $\fb$ can be defined homologically.

For small $n$, we are able to give conditions which are
equivalent to the Gorensteinness of $Z$ 
in terms of parameters ${\mathbf p}$ or the matrix $B$.

\begin{theorem}\label{zzthm0.12}
Assume Hypothesis~\ref{zzhyp0.3}. 
\begin{enumerate}
\item[(1)]
Let $n=2$. Then $Z=\kk[x_1^\ell,x_2^\ell]$ so $Z$ is 
Gorenstein.
\item[(2)] 
Let $n=3$. For each $1 \leq i, j \leq 3$, let 
$b_{ij}' = \gcd(b_{ij}, \ell)$. Then $Z$ is Gorenstein 
if and only if 
\[ \overline{B}(b_{23}',b_{13}',b_{12}')^T = 0\]
in $\overline{\ZZ}^3$.
\item[(3)] 
Let $n=4$. Then $Z$ is Gorenstein if and only if 
\[ \frac{\ell}{\gcd(\pf(B),\ell)} 
\overline{B} (v_1,v_2,v_3,v_4)^T = 0\]
in $\overline{\ZZ}^4$ where 
$v_i = \gcd(\ell, \{b_{jk} \mid j,k \neq i\})$ for 
$i=1,\hdots,4$.
\end{enumerate}
\end{theorem}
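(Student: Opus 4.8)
The plan is to reduce all three cases to Theorem~\ref{zzthm0.11} by pinning down the invariants $\mff_i$ explicitly in terms of $B$ and $\ell$. Since $O$ acts diagonally on the monomial $\kk$-basis $\{x_1^{d_1}\cdots x_n^{d_n}\}$ of $S$ and $Z=S^O$, the algebra $Z$ is spanned by the \emph{central monomials}, and $x_1^{d_1}\cdots x_n^{d_n}$ is central if and only if every $\phi_k$ fixes it, i.e. $\prod_s p_{ks}^{d_s}=1$ for all $k$, i.e. $B\mathbf d\equiv 0\pmod\ell$ where $\mathbf d=(d_1,\dots,d_n)^T$. Writing $L=\{\mathbf d\in\ZZ^n:B\mathbf d\equiv 0\pmod\ell\}$, one checks that $\mff_i$ is the positive generator of the projection $\pi_i(L)\subseteq\ZZ$ (nonnegativity of exponents is irrelevant, as $\ell e_j\in L$ for all $j$), so in particular $\mff_i\mid\ell$. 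Finally $\oj_S\,\pg_S=\prod_{j=1}^n x_j^{\mff_j}$, so by Theorem~\ref{zzthm0.11}(4), $Z$ is Gorenstein if and only if $\overline B(\mff_1,\dots,\mff_n)^T=0$ in $\overline{\ZZ}^n$. Thus the whole problem becomes: compute $\mff_i$ for $n=2,3,4$.

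For $n=2$, the minimality of $\ell$ forces $\gcd(b_{12},\ell)=1$, so $b_{12}d_1\equiv 0$ and $b_{12}d_2\equiv 0\pmod\ell$ give $L=\ell\ZZ^2$; hence $\mff_1=\mff_2=\ell$ and $Z=\kk[x_1^\ell,x_2^\ell]$ is a polynomial ring. For $n=3$, the vector $\mathbf a=(b_{23},-b_{13},b_{12})^T$ lies in $\ker B\subseteq L$ (indeed $\adj B=\mathbf a\mathbf a^T$), so $\mff_i\mid\gcd(b_{jk},\ell)=b_{jk}'$ for $\{i,j,k\}=\{1,2,3\}$. Conversely, for $\mathbf d\in L$ the two congruences coming from rows $j$ and $k$ of $B\mathbf d\equiv 0\pmod\ell$ read $b_{ij}d_i\equiv(\text{a multiple of }b_{jk})$ and $b_{ik}d_i\equiv(\text{a multiple of }b_{jk})$, whence $b_{jk}'$ divides $\gcd(b_{ij},b_{ik})\,d_i$; since $\gcd(b_{jk}',b_{ij},b_{ik})=\gcd(\ell,\{b_{st}\}_{s<t})=1$ by Hypothesis~\ref{zzhyp0.3}, we get $b_{jk}'\mid d_i$. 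Therefore $\mff_i=b_{jk}'$, and $\overline B(\mff_1,\mff_2,\mff_3)^T=0$ becomes exactly part (2).

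The case $n=4$ is the technical heart, and it is here that the Pfaffian enters, through the Pfaffian adjugate $\Padj(B)$: its entries are $\pm b_{kl}$ (with $\{k,l\}$ the complement of the index pair), it is skew-symmetric, and $\Padj(B)\,B=B\,\Padj(B)=\pm\pf(B)\,I_4$. Put $m=\gcd(\pf(B),\ell)$ and $c=\ell/m$, and for each $i$ set $w_i=\gcd\{b_{jk}:j,k\neq i\}$ and $v_i=\gcd(\ell,w_i)$; one records the elementary facts $w_i\mid\pf(B)$ (each of the three monomials of $\pf(B)$ contains an edge avoiding $i$), hence $v_i\mid m$ and $\gcd(w_i,m)=v_i$. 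Upper bound on $\mff_i$: the $j$-th column $\mathbf u_j$ of $\Padj(B)$ satisfies $B\mathbf u_j=\pm\pf(B)e_j$, so $c\mathbf u_j\in L$, and letting $j$ range over the three indices $\neq i$ (whose $i$-th entries are the three $\pm b_{kl}$ with $k,l\neq i$) shows $\mff_i\mid\gcd(cw_i,\ell)=cv_i$. Lower bound: given $\mathbf d\in L$, applying $\Padj(B)$ to $B\mathbf d\equiv 0\pmod\ell$ gives $\pf(B)\,d_i\equiv 0\pmod\ell$, hence $c\mid d_i$; writing $\mathbf d=c\mathbf d'$, the congruence becomes $B\mathbf d'\equiv 0\pmod m$, and reading off the three rows of $B$ other than row $i$ gives $v_i\mid b_{ij}d_i'$ for all $j\neq i$, so (using $\gcd(v_i,\{b_{ij}\}_{j\neq i})=\gcd(\ell,\{b_{st}\}_{s<t})=1$) we obtain $v_i\mid d_i'$, i.e. $cv_i\mid d_i$. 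Hence $\mff_i=cv_i=\tfrac{\ell}{\gcd(\pf(B),\ell)}\,v_i$, and $\overline B(\mff_1,\dots,\mff_4)^T=0$ becomes $\tfrac{\ell}{\gcd(\pf(B),\ell)}\,\overline B(v_1,v_2,v_3,v_4)^T=0$ in $\overline{\ZZ}^4$, which is part (3).

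The part I expect to require the most care is the lower bound for $\mff_i$ when $n=4$: one must descend from modulus $\ell$ to the auxiliary modulus $m=\gcd(\pf(B),\ell)$ via the Pfaffian--adjugate identity before the coprimality argument can be run, and all of the small divisibility identities ($w_i\mid\pf(B)$, $v_i\mid m$, $\gcd(w_i,m)=v_i$, $\gcd(cw_i,\ell)=cv_i$, and $\gcd(v_i,\{b_{ij}\}_{j\neq i})=1$) must be verified and deployed in the right order. By comparison, the reduction via Theorem~\ref{zzthm0.11}, the $n=2$ and $n=3$ computations, and the final substitutions are routine.
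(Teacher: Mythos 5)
Your proposal is correct, and it diverges from the paper's proof in an interesting way on the hardest part. The reduction is the same: the paper also passes through Theorem~\ref{zzthm0.11} (restated as Lemma~\ref{zzlem5.1}) and then computes the $\mff_i$, and your $n=2$ case and your $n=3$ case (which in effect reproves Lemma~\ref{lem.fhyper}, using the same row-congruence and coprimality argument plus the kernel vector $(b_{23},-b_{13},b_{12})^T$) match the paper. For $n=4$, however, the paper does not produce a closed formula for $\mff_i$: it works locally at each prime $p\mid\ell$, using the Smith normal form of $B$ (Proposition~\ref{zzpro4.3}) and the resulting expressions \eqref{E4.5.2} for the local $\mff_{p,i}$, and then shows the $i$th component of $\tfrac{\ell}{\gcd(\pf(B),\ell)}(v_1,\dots,v_4)^T$ has the same $p$-valuation as $\mff_i$ for every $p\mid\ell$ (after the normalization $\alpha_{12}=0$ and the Pfaffian identity to discard $b_{34}$). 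You instead argue globally with the Pfaffian adjugate, getting the exact identity $\mff_i=\tfrac{\ell}{\gcd(\pf(B),\ell)}\,\gcd(\ell,\{b_{jk}\}_{j,k\neq i})$ via an upper bound from the columns of $\Padj(B)$ (whose relevant entries are the complementary $\pm b_{kl}$, as the paper itself uses in Proposition~\ref{zzpro3.5}) and a lower bound from $\Padj(B)B=\pm\pf(B)I$ followed by descent to the modulus $m=\gcd(\pf(B),\ell)$ and the coprimality $\gcd(\ell,\{b_{st}\})=1$; I checked the supporting divisibility facts ($w_i\mid\pf(B)$, $\gcd(w_i,m)=v_i$, $\gcd(cw_i,\ell)=cv_i$, $\gcd(v_i,\{b_{ij}\}_{j\neq i})=1$) and they all hold. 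What each route buys: the paper's localization reuses the Smith-normal-form machinery already needed for the regularity criteria (Propositions~\ref{zzpro4.3}--\ref{zzpro4.5}, Corollary~\ref{zzcor4.8}), while your argument is self-contained, avoids relabeling/localization, and yields a genuine $n=4$ analogue of Lemma~\ref{lem.fhyper}, which is slightly stronger than the valuation comparison the paper records.
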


\subsection{Graded isolated singularities}\label{zzsec0.4}
By Remark \ref{zzrem3.7}, under Hypothesis~\ref{zzhyp0.3}, 
$Z$ is regular or does not have isolated singularities. 

\section{Preliminaries}
\label{zzsec1}

In this section, we review some concepts that will be used
throughout the paper. Some of the definitions can be found 
in the survey paper of Kirkman \cite{Ki}. 

\begin{definition}
\label{zzdef1.1}
A connected graded algebra $A$ is called {\it Artin--Schelter 
Gorenstein} (or {\it AS Gorenstein}, for short) if $A$ has 
injective dimension $d<\infty$ on the left and on the right, 
and
\[\Ext^i_A({}_A\kk, {}_{A}A)\cong \Ext^i_{A}(\kk_A,A_A)\cong 
\delta_{id} \kk(\bfl)\]
where $\delta_{id}$ is the Kronecker-delta. If in addition $A$ 
has finite global dimension and finite Gelfand--Krillov 
dimension ($\GKdim$), then $A$ is called {\it Artin--Schelter regular} 
(or {\it AS regular}, for short) of dimension $d$.
\end{definition}

For an algebra $A$, the \emph{enveloping algebra} of $A$ is 
$A^e=A \otimes A^{\mathrm{op}}$. If $\sigma$ is an automorphism 
of $A$, then $^{\sigma} \! A$ is the $A^e$-module which, 
as a $\kk$-vector space is just $A$, but where the natural action 
is twisted by $\sigma$ on the left: that is,
\[ (a \otimes b)\cdot c = \sigma(a)cb\]
for all $a \otimes b \in A^e$ and $c \in A$.

\begin{definition}\label{zzdef1.2}
Suppose $A$ is AS regular of dimension $d$. Then there is a graded 
algebra automorphism $\mu$ of $A$, called the 
\emph{Nakayama automorphism}, such that
\[ \Ext_{A^e}^d(A,A^e) \cong {}^{\mu} \! A.\]
When $\mu = \Id$, then we say that $A$ is \emph{Calabi--Yau}.
\end{definition}

Next we recall the trace series and homological determinant.

\begin{definition}\label{zzdef1.3}
Let $A$ be a connected graded algebra with $A_j$ denoting the
the homogeneous part of $A$ of degree $j$. The {\it trace series}
of a graded algebra automorphism $\sigma$ of $A$ is defined to 
be
\[\Tr_A(\sigma,t):=\sum_{j=0}^{\infty} (\tr(\sigma\restrict{A_j})) t^j,\]
where $\tr(\sigma\restrict{A_j})$ is the usual trace of the $\kk$-linear 
map $\sigma$ restricted to $A_j$.
\end{definition}

When $\sigma$ is the identity, we recover the Hilbert series 
of $A$, namely, $\Tr_A(\Id, t)=h_A(t)$. If $A$ is AS regular, 
then $h_A(t)=1/r(t)$ where $r(t)$ is an integral polynomial 
of degree $\bfl\geq d:=\gldim A$ and 
$r(t)=1+r_1 t+\cdots +r_{\bfl-1} t^{\bfl-1}+r_{\bfl} t^{\bfl}$
where $r_{\bfl}=(-1)^d$. For every graded algebra 
automorphism $\sigma$ of $A$, $\Tr_A(\sigma,t)$ must be equal 
to $1/q(t)$ where 
$q(t)=1+q_1 t+\cdots +q_{\bfl-1} t^{\bfl-1}+q_{\bfl} t^{\bfl}$.
By \cite[Lemma 2.6]{JoZ}, the homological determinant of $\sigma$
is defined to be
\begin{align}\label{E1.3.1}
\hdet(\sigma)=(-1)^d q_{\bfl}=r_{\bfl} q_{\bfl}.
\end{align}
One nice property of homological determinant is that the map
\[\hdet: \Autgr(A)\to \kk^{\times}\]
is a group homomorphism \cite[Proposition 2.5]{JoZ}. When we 
consider a finite group $G$ acting on $A$, we say that the 
$G$-action has trivial homological determinant if $\hdet(G)=1$. 

\begin{example}\label{zzex1.4} 
Let $A$ be the weighted skew polynomial ring $\SP$ with 
$\deg(x_i)>0$ for all $i$. Let $\sigma$ be a diagonal automorphism 
of $A$ determined by
\[\sigma(x_i)=a_i x_i\]
for all $i$, where $a_i\in \kk^{\times}$. Such an automorphism
is also denoted by $\diag(a_i)$. It is easy to see that 
the trace series of $\sigma$ is 
\[\Tr_A(\sigma,t)= \prod_{i=1}^n(1-a_it^{\deg(x_i)})^{-1}.\]
By \eqref{E1.3.1}, the homological determinant 
of $\sigma$ is 
\begin{align}\label{E1.4.1}
	\hdet(\sigma)=\prod_{i=1}^n a_i.
\end{align} 
Another way of expressing the $\hdet$ is
\begin{align}\label{E1.4.2}
	\sigma\left(\prod_{i=1}^n x_i\right)=\hdet(\sigma)\left(\prod_{i=1}^n x_i\right).
\end{align}
Note that in this example, $\hdet(\sigma)$ is the determinant 
of the matrix $\diag(a_i)$. In general this is not true, 
see \cite[Example 1.8(2)]{Ki}.
\end{example}

\begin{lemma}\label{zzlem1.5}
Assume Hypothesis~\ref{zzhyp0.3}.
\begin{enumerate}
\item[(1)]
For all $i$, $\hdet \phi_i=\prod_{s=1}^n p_{is}$.
\item[(2)]
$O$ has trivial homological determinant if and 
only if $\prod_{s=1}^n p_{is}=1$ for all $i$.
\item[(3)]
If $n=3$, then $O$ has trivial homological determinant
if and only if there exists a root of unity $p$ such that
\[
(p_{ij})=\begin{pmatrix} 1& p & p\inv\\
p\inv & 1 & p\\ 
p & p\inv & 1  \end{pmatrix}.
\]
\item[(4)]
If $n=4$, then $O$ has trivial homological determinant if 
and only if there exist roots of unity $p,q,r$ such that
\[(p_{ij})=\begin{pmatrix} 1& q & p &p^{-1}q^{-1}\\
q^{-1} &1& r &q r^{-1}\\
p^{-1} & r^{-1} & 1 & pr\\
pq& q^{-1} r& p^{-1}r^{-1}&1
\end{pmatrix}.\]
\end{enumerate}
\end{lemma}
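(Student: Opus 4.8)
The plan is to prove Lemma~\ref{zzlem1.5} by leveraging Example~\ref{zzex1.4} to reduce everything to a computation with the parameters $p_{ij}$. Each $\phi_i$ is the diagonal automorphism $\diag(p_{i1},\dots,p_{in})$ since $\phi_i(x_s)=x_i\inv x_s x_i = p_{is} x_s$ by Hypothesis~\ref{zzhyp0.3}. Part~(1) is then immediate from \eqref{E1.4.1}: $\hdet\phi_i=\prod_{s=1}^n p_{is}$. Part~(2) follows because $\hdet\colon\Autgr(S)\to\kk^\times$ is a group homomorphism and $O$ is generated by the $\phi_i$, so $\hdet(O)=1$ exactly when $\hdet\phi_i=1$ for every $i$, which by~(1) is the stated condition $\prod_{s=1}^n p_{is}=1$ for all $i$.

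For parts~(3) and~(4), the approach is to solve the system $\prod_{s=1}^n p_{is}=1$ ($i=1,\dots,n$) subject to the multiplicative antisymmetry $p_{ij}p_{ji}=1$ and $p_{ii}=1$. For $n=3$: the three conditions are $p_{12}p_{13}=1$, $p_{21}p_{23}=1$, $p_{31}p_{32}=1$. Setting $p:=p_{12}$, antisymmetry gives $p_{21}=p\inv$; the first equation forces $p_{13}=p\inv$, hence $p_{31}=p$; the second gives $p_{23}=p_{21}\inv=p$, hence $p_{32}=p\inv$; the third, $p_{31}p_{32}=p\cdot p\inv=1$, is automatically satisfied. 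This yields exactly the displayed matrix, and conversely any such matrix visibly satisfies the row-product condition. For $n=4$: set $q:=p_{12}$, $p:=p_{13}$, $r:=p_{23}$ as free roots of unity. Antisymmetry fixes $p_{21}=q\inv$, $p_{31}=p\inv$, $p_{32}=r\inv$. The first row condition $p_{12}p_{13}p_{14}=1$ gives $p_{14}=p\inv q\inv$, hence $p_{41}=pq$. The second row condition $p_{21}p_{23}p_{24}=1$, i.e. $q\inv r\, p_{24}=1$, gives $p_{24}=q r\inv$, hence $p_{42}=q\inv r$. The third row condition $p_{31}p_{32}p_{34}=1$, i.e. $p\inv r\inv p_{34}=1$, gives $p_{34}=pr$, hence $p_{43}=p\inv r\inv$. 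One then checks the fourth row condition $p_{41}p_{42}p_{43}=pq\cdot q\inv r\cdot p\inv r\inv=1$ holds automatically, matching the displayed matrix; the converse direction is again a direct verification.

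I do not expect any serious obstacle here — the lemma is computational. The only point requiring mild care is that the $n=3$ and $n=4$ systems are underdetermined in the "obvious" parameters and that the last row-product equation turns out to be a consequence of the others; I would state explicitly that this redundancy is what makes the parametrization (by $p$ in the $n=3$ case and by $p,q,r$ in the $n=4$ case) work, with no further constraints on these roots of unity. It is worth remarking that this redundancy is structural: the product over all $i$ of $\prod_s p_{is}$ equals $\prod_{i,s}p_{is}=\prod_{i<s}(p_{is}p_{si})=1$ by antisymmetry, so the $n$-th condition always follows from the other $n-1$; this gives a clean reason in all cases and I would insert it as a one-line observation before handling $n=3,4$.
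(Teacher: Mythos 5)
Your proposal is correct and follows the same route as the paper: part (1) comes directly from the trace-series formula \eqref{E1.4.1}, part (2) from the fact that $\hdet$ is a group homomorphism and $O$ is generated by the $\phi_i$, and parts (3)--(4) are the straightforward solution of the row-product conditions, which the paper leaves as an immediate consequence of (1). Your extra observation that the last row condition is automatically redundant (since $\prod_{i,s}p_{is}=\prod_{i<s}p_{is}p_{si}=1$) is a nice touch but does not change the substance.
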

\begin{proof}
Part (1) follows directly from \eqref{E1.4.1}.
Parts (2), (3), and (4) now follow from (1).
\end{proof}

The notions of the Jacobian and reflection arrangement of a 
finite group action on an AS regular algebra were introduced 
in \cite[Definition 0.3]{KZ}. We will soon consider diagonal 
actions on $\SP$ after the definition (for example, $G$ is a 
subgroup of $O$).

\begin{definition}[{\cite[Definition 0.3]{KZ}}]\label{zzdef1.6}
Let $A$ be a noetherian AS regular algebra and $G$ is a
finite subgroup of $\Autgr(A)$. 
\begin{enumerate}
\item[(1)]
Let $A_{\hdet^{-1}}:
=\{ x\in A\mid \sigma(x)= (\hdet(\sigma))^{-1} x,\;
\forall \; \sigma\in G\}$. If $A_{\hdet^{-1}}$ is a free 
$A^G$-module of rank one on both sides generated by an element,
denoted by $\fj_{A,G}$, then $\fj_{A,G}$ is called the 
{\it Jacobian} of the $G$-action. By \cite[Theorem 2.4]{KZ}, 
the Jacobian $\fj_{A,G}$ exists if and only if $A^G$ is AS 
Gorenstein. 
\item[(2)]
Let $A_{\hdet}:=\{ x\in A\mid \sigma(x)= \hdet(\sigma)  
x,\; \forall \; \sigma\in G\}$. If $A_{\hdet}$ is a free 
$A^G$-module of rank one on both sides generated by an 
element, denoted by $\fa_{A,G}$, then $\fa_{A,G}$ is 
called the {\it reflection arrangement} of the $G$-action.
By \cite[Theorem 0.2]{KZ}, the reflection arrangement 
$\fa_{A,G}$ exists if $A^G$ is AS regular. 
\end{enumerate}
\end{definition}

\begin{lemma}\label{zzlem1.7}
Assume Hypothesis~\ref{zzhyp0.3} and let $G$ be a finite 
subgroup of diagonal automorphisms of $S$. Then
$\pg_S:=\prod_{i=1}^n x_i$ is an element in 
$S_{\hdet}$. As a consequence, if $\fa_{S,G}$ exists, 
then it is a factor of $\pg_S$. 
\end{lemma}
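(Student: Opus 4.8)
The plan is to read both assertions off from Example~\ref{zzex1.4} together with the defining property of $\fa_{S,G}$, with essentially no new computation. First I would record that, by hypothesis, every $\sigma\in G$ is a diagonal automorphism, say $\sigma(x_i)=a_i(\sigma)\,x_i$ with $a_i(\sigma)\in\kk^{\times}$. Since each generator has degree $1>0$, Example~\ref{zzex1.4} applies to every such $\sigma$, and \eqref{E1.4.2} gives $\sigma(\pg_S)=\hdet(\sigma)\,\pg_S$. As this holds for all $\sigma\in G$, Definition~\ref{zzdef1.6}(2) says precisely that $\pg_S\in S_{\hdet}$, which is the first claim.

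For the ``as a consequence'' part, suppose $\fa_{S,G}$ exists, i.e. $S_{\hdet}$ is free of rank one as a (say) right $S^G$-module on the generator $\fa_{S,G}$. Then $\pg_S\in S_{\hdet}=\fa_{S,G}\,S^G$, so $\pg_S=\fa_{S,G}\,z$ for some $z\in S^G$; hence $\fa_{S,G}$ is a factor of $\pg_S$. If one wants the sharper statement that $\fa_{S,G}$ agrees, up to a nonzero scalar, with a product $\prod_{i\in T}x_i$ of some of the generators, I would invoke the $\ZZ^n$-grading of $S$ by multidegree: because $G$ acts diagonally, each monomial of $S$ is a $G$-eigenvector and only those on which $G$ acts through the character $\sigma\mapsto\hdet(\sigma)$ lie in $S_{\hdet}$, so $S_{\hdet}$ is spanned by monomials. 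A graded Nakayama argument then lets one take $\fa_{S,G}$ multihomogeneous, hence a scalar times a monomial; a leading/trailing-term argument using that $S$ is a domain with one-dimensional multigraded components forces $z$ to be multihomogeneous as well; and since the two multidegrees must sum to $(1,\dots,1)$, each multidegree entry of $\fa_{S,G}$ lies in $\{0,1\}$, giving $\fa_{S,G}=c\prod_{i\in T}x_i$ for some $T\subseteq\{1,\dots,n\}$ and $c\in\kk^{\times}$.

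The only step carrying any content is this last leading-term argument — that a factorization $\pg_S=fg$ with $f,g$ nonzero in the domain $S$ forces $f$ and $g$ to be multihomogeneous — and it is entirely routine (fix any term order on $\ZZ^n$ and compare the minimal and maximal multidegrees occurring in $fg$). Everything else is immediate from Example~\ref{zzex1.4} and the definition of $S_{\hdet}$ and $\fa_{S,G}$, so I do not expect any genuine obstacle here.
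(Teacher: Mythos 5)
Your proof is correct and follows the same route as the paper: the membership $\pg_S\in S_{\hdet}$ is exactly \eqref{E1.4.2} applied to each $\sigma\in G$, and the divisibility is immediate from $S_{\hdet}=\fa_{S,G}\,S^G$ once $\fa_{S,G}$ exists (the paper simply says ``the consequence is clear''). Your additional multigraded/leading-term argument showing $\fa_{S,G}$ is a scalar times a square-free monomial is a correct refinement but goes beyond what the lemma asserts.
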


\begin{proof} 
The main assertion follows from \eqref{E1.4.2}.
The consequence is clear.
\end{proof}

Recall that $\oj_S$, $\oa_S$, and $\od_S$ are 
defined \eqref{E0.3.2}--\eqref{E0.3.4}. 

\begin{proposition}\label{zzpro1.8}
Assume Hypothesis~\ref{zzhyp0.3}. 
\begin{enumerate}
\item[(1)]
Up to nonzero scalars, $\oj_S$, $\oa_S$, and $\od_S$ are 
algebra invariants of $S$. Namely, they are independent 
of the chosen generating set $\{x_1,\dots,x_n\}$.
\item[(2)]
The set $\{\mff_1,\dots, \mff_n\}$ with multiplicities 
is an algebra invariant of $S$.
\end{enumerate}
\end{proposition}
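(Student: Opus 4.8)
The plan is to reduce both parts to two ingredients: a combinatorial description of the center $Z$, and the isomorphism classification of skew polynomial rings from \cite[Theorem 2]{Ga}. Since each $\phi_j$ preserves the $\ZZ^n$-grading in which $\deg x_i$ is the $i$-th standard basis vector $e_i$, the center $Z=\SP^O$ is $\ZZ^n$-graded and is spanned by those monomials $x^v:=x_1^{v_1}\cdots x_n^{v_n}$ that happen to be central. Writing $p_{sj}=\xi^{b_{sj}}$, one has $x^v\in Z$ exactly when $\prod_s p_{sj}^{v_s}=1$ for every $j$, i.e.\ when $Bv\equiv0\pmod\ell$; let $L\subseteq\ZZ_{\ge 0}^n$ be the submonoid of such $v$. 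Then $\mff_i=\gcd\{v_i\mid v\in L\}$, which is manifestly a function of the parameter matrix $\bp$ alone (equivalently, of $B$ and $\ell$).

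For part (2): if $\{y_1,\dots,y_n\}$ is another generating set presenting $S$ as a skew polynomial ring $\kk_{\bq}[y_1,\dots,y_n]$, then by \cite[Theorem 2]{Ga} there is a permutation $\tau$ of $\{1,\dots,n\}$ with $q_{ij}=p_{\tau(i)\tau(j)}$ for all $i,j$; the associated value of $\ell$ is unchanged, being the order of the subgroup of $\kk^\times$ generated by the $p_{ij}$. The substitution $w_{\tau(s)}:=v_s$ identifies the condition $\prod_s q_{sj}^{v_s}=1$ ($\forall j$) with the condition defining $L$ in the $x$-presentation, and since $v_i=w_{\tau(i)}$ this shows the $i$-th invariant computed from $\{y_j\}$ equals the $\tau(i)$-th invariant computed from $\{x_i\}$. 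Hence the multiset $\{\mff_1,\dots,\mff_n\}$ is independent of the presentation.

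For part (1): any two generating sets of degree-one elements are bases of $S_1$, hence related by a linear substitution $y_j=\sum_i M_{ij}x_i$ with $M\in\GL_n(\kk)$. By \cite[Theorem 2]{Ga} and part (2), reindexing the $y_j$ by a suitable permutation changes each of $\oj_S,\oa_S,\od_S$ (computed from the $y$'s) only by a nonzero scalar and reduces us to the case $\bq=\bp$; then $\psi\colon x_i\mapsto y_i$ extends to a graded automorphism of $S$, and $\oj_S$ computed from $\{y_j\}$ equals $\psi(\oj_S)$ computed from $\{x_i\}$ (and likewise for $\oa_S,\od_S$). So it suffices to show that $\oj_S$, $\oa_S$, and $\od_S$ are common eigenvectors for $\Autgr(S)$. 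Two observations do this. \emph{First}, if columns $i\ne j$ of $B$ coincide, then $\mff_i=1$: indeed $Be_i=Be_j$, so $v:=e_i+(\ell-1)e_j$ satisfies $Bv=\ell\,Be_i\equiv0\pmod\ell$, i.e.\ $v\in L$ with $v_i=1$. \emph{Second}, every $\psi\in\Autgr(S)$ carries each $x_i$ into the span of those generators $x_k$ whose $B$-column equals that of $x_i$. To see this, note $z:=\psi(x_i)$ is a nonzero homogeneous normal element of degree one (as $x_i$ is normal in $S$ and $\psi$ is a graded automorphism), so for each $m$ we may write $zx_m=s_m z$ with $s_m$ homogeneous of degree one (using that $S$ is a domain). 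Comparing the coefficients of the basis monomials $x_ax_b$ of $S_2$ on the two sides --- first the coefficient of $x_a^2$ for $a\in\operatorname{supp}(z)$, which forces $s_m$ to have no $x_a$-term, then the coefficient of $x_ax_m$ for $a\in\operatorname{supp}(z)$ --- shows that $p_{am}$ is the same for all $a\in\operatorname{supp}(z)$, for every $m$; equivalently, all columns of $B$ indexed by $\operatorname{supp}(z)$ agree. Combining the two: if $\mff_i>1$ then the only column of $B$ equal to column $i$ is column $i$ itself, so $\psi(x_i)\in\kk x_i$; hence $\psi$ scales $\oj_S=\prod_{\mff_i>1}x_i^{\mff_i-1}$, and likewise it scales $\oa_S=\prod_{\mff_i>1}x_i$ and $\od_S=\oj_S\,\oa_S$. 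Unwinding the reindexing then gives part (1).

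The step I expect to be the main obstacle is the second observation --- identifying the homogeneous normal elements of degree one as precisely the vectors supported in a single block of mutually commuting, identically-acting generators. This is a finite computation with the structure constants of $S$, but it demands careful bookkeeping over the basis monomials of $S_2$ and uses that $S$ is a domain; alternatively one might cite a description of $\Autgr(\SP)$ in its place. Everything else is routine: part (2) and the reindexing in part (1) are bookkeeping around \cite[Theorem 2]{Ga}, and the first observation is immediate.
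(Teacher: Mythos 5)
Your part (2) and the reduction in part (1) (reindex via \cite[Theorem 2]{Ga}, then reduce to a graded automorphism $\psi$ and show $\oj_S,\oa_S,\od_S$ are $\Autgr(S)$-eigenvectors) are reasonable, and they take a genuinely different route from the paper, which instead identifies $\oa_S$ (and then $\od_S$, $\oj_S$) with the radical-type factorization of the discriminant $d_w(S/Z)$ of \cite{CPWZ2} and invokes its invariance under arbitrary isomorphisms. But your key step --- the ``second observation'' --- is false as stated, and this is a genuine gap. Your normality computation for $z=\psi(x_i)$ only shows that the columns of $\overline{B}$ indexed by $\operatorname{supp}(z)$ agree \emph{with one another}; it does not show they agree with column $i$, since $i$ need not lie in $\operatorname{supp}(z)$. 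Concretely, take $S=\kk_{-1}[x_1,x_2]$ and $\psi$ the swap $x_1\leftrightarrow x_2$: this is a graded automorphism, the two columns of $\overline{B}$ are distinct mod $2$, and $\mff_1=\mff_2=2>1$ while $\psi(x_1)=x_2\notin\kk x_1$. So the deduction ``$\mff_i>1\Rightarrow\psi(x_i)\in\kk x_i$'' fails (the proposition survives here only because the swap permutes the factors of $\oj_S=x_1x_2$).

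What you actually need is weaker but still unproved by your argument: that $\psi$ permutes the lines $\{\kk^\times x_i:\mff_i>1\}$ in a way that preserves the values $\mff_i$, so that the products $\oj_S$, $\oa_S$, $\od_S$ are scaled. Your block computation leaves open, for instance, that $\psi(x_i)$ with $\mff_i>1$ is a nontrivial linear combination inside a block of two or more identical columns (all of whose variables have $\mff=1$ by your first observation), or that it lands on a singleton-block variable $x_j$ with $\mff_j\neq\mff_i$; ruling these out requires an additional argument using that $\psi$ preserves $Z$ (or, as in the paper, that the $x_i$ with $\mff_i>1$ are exactly the prime factors of the invariantly defined element $d_w(S/Z)$, with $\mff_i$ recovered from $\mff_i=\gcd\{d\mid x_i^d a\in Z,\ x_i\nmid a\}$). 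Note also that the paper's discriminant argument, unlike your degree-one linear-substitution reduction, covers generating sets that are not homogeneous of degree one, since divisors of a homogeneous element in the $\mathbb{N}^n$-graded domain $S$ are automatically homogeneous.
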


\begin{proof} (1) We use the notation introduced in 
\cite{CPWZ2} without giving detailed definitions. For each 
$s \in \{1,\hdots,n\}$, let $T_s$ be the set defined as 
before \cite[Lemma 2.9]{CPWZ2} and let $d_w(S/Z)$ be the 
discriminant of $S$ over its center $Z$ introduced in 
\cite[Definition 1.2(3)]{CPWZ2} where $w=\rk(S_Z)$. It 
follows from \cite[Lemma 2.9(2)]{CPWZ2} that $\mff_s>1$ 
if and only if $T_s$ is empty. Combined with 
\cite[Theorem 2.11(2)]{CPWZ2}, we obtain that 
$\mff_s>1$ if and only if $x_s$ divides $d_w(S/Z)$,
In other words, $d_w(S/Z)=_{\kk^{\times}}\prod_{i, \mff_i>1} 
x_i^{\lambda_i}$ for some $\lambda_i\geq 1$. By definition,
$d_w(S/Z)$ is an algebra invariant of $A$ up to a nonzero 
scalar \cite{CPWZ2}.

Since $S$ is ${\mathbb N}^n$-graded, the prime decomposition 
of $x_i^{\lambda_i}$ is unique and $\{x_i\mid \mff_i >1\}$ is 
a complete list of prime factors of $d_w(S/Z)$. Therefore the 
list $I:=\{\kk^{\times} x_i\mid \mff_i>1\}$ (after adding 
nonzero scalars) is an algebra invariant of $S$. As a 
consequence, $\oa_S$ is an algebra invariant.

Note that, for each $\kk^{\times} x_i\in I$, 
\[\mff_i=\gcd\{ d\mid x_i^d a\in Z  {\text{ for some $a\in S$ with $x_i\nmid a$}}\}.\]
This implies that the set $\{\kk^{\times}x_i^{\mff_i} \mid 
\mff_i>1\}$ is also an algebra invariant of $S$. 
As a consequence, $\od_S$ is an algebra invariant of $S$.

Since $\oj_S=\od_S(\oa_S)^{-1}$, $\oj_S$ is an 
algebra invariant.

(2) By part (1), the set $\{\mff_i\mid \mff_i>1\}$ is an 
algebra invariant of $S$. Since $\{\mff_1,\dots,\mff_n\}
=\{\mff_i\mid \mff_i>1\}\cup \{1,\dots,1\}$ considered as
a set with multiplicities, the assertion follows.
\end{proof}

\section{Reflections and Reflection groups}\label{zzsec2}

Throughout this section we assume Hypothesis~\ref{zzhyp0.3}.
Recall that a linear automorphism $g$ of a finite-dimensional 
vector space $V$ is a \emph{pseudo-reflection} if $g$ fixes a 
codimension one subspace. In the noncommutative case, we need 
to use the trace series [Definition~\ref{zzdef1.3}] to define 
a reflection.

\begin{definition}[{\cite[Definition 1.4]{KKZ5}}]
\label{zzdef2.1}
Let $A$ be an AS regular algebra of Gelfand--Kirillov 
dimension $n$. 
\begin{enumerate}
\item[(1)]
We call a graded algebra automorphism 
$\sigma$ of $A$ a {\it reflection} of $A$ if $\sigma
\neq \Id$ has finite order and the trace series 
of $\sigma$ has the form
\[\Tr_A(\sigma,t)=\frac{1}{(1-t)^{n-1}q(t)} \quad
{\text{where}} \;\; q(1)\neq 1.\]
\item[(2)]
A finite subgroup $G\subseteq \Autgr(A)$ is called 
a {\it reflection group} if it is generated by reflections.
\item[(3)]
A finite subgroup $G\subseteq \Autgr(A)$ is called 
{\it small} if it does not contain any reflections. 
\end{enumerate}
\end{definition}

\begin{example}
\label{zzex2.2} 
Let $S=\SP$ with $\deg(x_i)=1$ for all $i$ and let 
$\sigma=\diag(a_i)$, $a_i \in \kk^\times$, as in 
Example~\ref{zzex1.4}. By that example the trace series of 
$\sigma$ is 
\[\Tr_S(\sigma,t) = \prod_{i=1}^n(1-a_it)\inv.\]
Hence $\sigma$ is a reflection if and only if $a_i=1$
for all $i$ but one. Therefore $\sigma$ is a reflection
if and only if $\sigma\restrict{S_1}$ is a pseudo-reflection. 
As a consequence, $O$ is small in the sense of 
Definition~\ref{zzdef2.1}(3) if and only if $O$ 
is small in the classical sense.
\end{example}

One noncommutative version of the Shephard--Todd--Chevalley
theorem is the following.

\begin{theorem}[{\cite[Theorem 5.5]{KKZ3}}]\label{zzthm2.3}
Let $S=\SP$ and let $G$ be a finite group of graded 
algebra automorphisms of $S$. Then $S^G$ has finite 
global dimension if and only if $G$ is generated by 
reflections of $S$ {\rm{(}}in this case $S^G$ is again
a skew polynomial ring with weighted grading{\rm{)}}.
\end{theorem}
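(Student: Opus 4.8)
The plan is to prove the two implications separately: the direction ``$\gldim S^G<\infty \Rightarrow G$ is generated by reflections'' by a Molien/trace-series comparison, and the converse (which also yields the parenthetical assertion) by classifying the reflections of $\SP$ together with their reflection groups, and then computing the invariant subrings explicitly.

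For the first direction I would begin with two standard facts about AS regular algebras: (a) since $S$ is a finitely generated module over the noetherian ring $S^G$, if $\gldim S^G<\infty$ then $S^G$ is itself AS regular of global and Gelfand--Kirillov dimension $n$; and (b) a noetherian AS regular algebra of $\GKdim\ n$ has Hilbert series of the form $\prod_{i=1}^n(1-t^{w_i})^{-1}$ for positive integers $w_i$. Since $\ch\kk=0$, Molien's formula gives
\[
\prod_{i=1}^n\frac{1}{1-t^{w_i}}\;=\;h_{S^G}(t)\;=\;\frac{1}{|G|}\sum_{g\in G}\Tr_S(g,t).
\]
Multiplying by $(1-t)^n$ and letting $t\to 1$, and using the standard vanishing $\lim_{t\to1}(1-t)^n\Tr_S(g,t)=0$ for every $g\neq\Id$ (while this limit is $1$ for $g=\Id$), one reads off $|G|=\prod_i w_i$; comparing the next Laurent coefficient at $t=1$, and using Example~\ref{zzex1.4} and Definition~\ref{zzdef2.1}(1) to see that only reflections contribute at that order (their contributions pairing off under $g\leftrightarrow g^{-1}$), one obtains
\[
\sum_{i=1}^n(w_i-1)\;=\;\#\{\,g\in G: g\text{ is a reflection of }S\,\}.
\]
If $G$ were small, the right-hand side would be $0$, forcing every $w_i=1$, hence $h_{S^G}=h_S$ and therefore $S^G=S$, i.e.\ $G=\{\Id\}$. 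For a general $G$ one passes to $H:=\langle\text{reflections of }S\text{ in }G\rangle$: by the second direction $S^H$ is again a weighted skew polynomial ring, $G/H$ acts on it, $(S^H)^{G/H}=S^G$ has finite global dimension, and $G/H$ is small on $S^H$ (the standard lemma that quotienting by the reflection subgroup leaves no reflections; cf.\ \cite{KKZ4}), so the previous case gives $G/H=\{\Id\}$, i.e.\ $G=H$.

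For the direction ``$G$ generated by reflections $\Rightarrow$ $S^G$ is a weighted skew polynomial ring'', the first step is to classify the reflections in $\Autgr(\SP)$. Any graded automorphism permutes the scalar classes $\{\kk^\times x_1,\dots,\kk^\times x_n\}$ of the normal generators, so it has the form $x_i\mapsto a_i x_{\pi(i)}$ for a permutation $\pi$ preserving $\bp$; computing $\Tr_S(g,t)$ from the cycle structure of $\pi$ shows that a reflection either (i) is diagonal and scales a single $x_j$ by a root of unity $\neq 1$, or (ii) is a \emph{mystic} reflection supported on a pair of variables, of a shape forced by constraints on the corresponding $p_{jk}$ (as already visible for $\kk_{-1}[x,y]$). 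The second step is to classify the finite subgroups of $\Autgr(\SP)$ generated by such elements: the permutation parts generate a subgroup of $S_n$ built from transpositions preserving $\bp$, and together with the diagonal part this yields a short list of ``types'' attached to the combinatorics of $B$ (generalizing the Weyl groups of types $A$ and $B$ and their products). The third step is, for each type, to exhibit $n$ algebraically independent normal homogeneous elements $y_1,\dots,y_n$ that generate $S^G$ and pairwise skew-commute, so that $S^G\cong\kk_{\bp'}[y_1,\dots,y_n]$ with $\deg y_i=w_i$; for a purely diagonal reflection group this is immediate ($G\cong\prod_j\ZZ/m_j\ZZ$ scaling the $x_j$, and $S^G=\kk_{\bp}[x_1^{m_1},\dots,x_n^{m_n}]$), and the mystic cases are handled by direct computation.

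The main obstacle is the second direction, and specifically its second and third steps: one must show that the combinatorics of $B$ admits only the anticipated types of reflection groups (no sporadic examples beyond the $A$- and $B$-type families and their products), and one must produce explicit generators for each invariant ring and verify there are no hidden relations among them. This is where the hypothesis that $S$ is a skew polynomial ring, rather than a general noetherian PI AS regular algebra, is used essentially. By contrast the first direction is comparatively soft, its only inputs being facts (a) and (b), Molien's formula, the vanishing of $\lim_{t\to1}(1-t)^n\Tr_S(g,t)$ for $g\neq\Id$, and the reduction lemma for the reflection subgroup.
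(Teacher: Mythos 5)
The paper itself offers no proof of Theorem~\ref{zzthm2.3}: it is quoted from \cite[Theorem 5.5]{KKZ3}, so your proposal must be measured against that proof. Your first direction is in the right spirit (it is essentially the trace-series/Molien pole-order argument of Kirkman--Kuzmanovich--Zhang combined with reduction modulo the reflection subgroup), but two inputs are not as standard as you claim: (i) it is \emph{not} a known fact that every noetherian AS regular algebra of $\GKdim$ $n$ has Hilbert series $\prod_{i=1}^n(1-t^{w_i})^{-1}$; the usual workaround is to use instead that $\gldim S^G<\infty$ makes $S$ a graded free $S^G$-module (\cite[Lemma 1.10]{KKZ5}), so that $p(t):=h_S(t)/h_{S^G}(t)\in\ZZ_{\geq 0}[t]$ with $p(0)=1$, and then the two leading Laurent coefficients of $\frac{1}{|G|}\sum_{g\in G}\Tr_S(g,t)$ at $t=1$ give $p(1)=|G|$ and, if $G$ is small, $p'(1)=0$, forcing $p=1$ and $G$ trivial; (ii) your small case as written (``all $w_i=1$'') only works for the standard grading, while your reduction applies it to the weighted algebra $S^H$, and you still need an argument (e.g.\ the rank count: the rank of $S$ over $S^G$ equals $|G|$ when $S$ is free over $S^G$) to pass from ``$G/H$ acts trivially on $S^H$'' to $G=H$. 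These points are repairable.

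The genuine gap is in the second direction, exactly where you locate the main work. Your step 1 is false: a graded automorphism of $\SP$ need not permute the lines $\kk x_1,\dots,\kk x_n$. Whenever the parameters degenerate --- say $p_{ij}=1$ for all $i,j$ in a subset $T$ and $p_{ik}$ independent of $i\in T$ for each fixed $k\notin T$ (e.g.\ $n=3$, $p_{12}=1$, $p_{13}=p_{23}=q\neq 1$) --- the group $\Autgr(S)$ contains a full $\GL$-block acting on $\bigoplus_{i\in T}\kk x_i$, and any classical pseudo-reflection in that block is a reflection of $S$ in the sense of Definition~\ref{zzdef2.1} that is neither diagonal nor ``mystic supported on a pair''; the reflection groups arising this way include arbitrary complex reflection groups on the block, not just $A$- and $B$-type families and their products. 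Consequently your classification of reflections and reflection groups, and with it your construction of explicit skew-commuting monomial-type generators of $S^G$, breaks down in these degenerate cases; handling them (splitting off commutative blocks and invoking the classical Shephard--Todd--Chevalley theorem there, together with the genuinely noncommutative analysis of mystic reflections and the groups they generate) is precisely the content of the proof in \cite{KKZ3}. Note that the present paper only ever applies Theorem~\ref{zzthm2.3} to groups of diagonal automorphisms, where your ``purely diagonal'' computation $S^G=\kk_{\bq}[x_1^{m_1},\dots,x_n^{m_n}]$ is indeed all that is used.
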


In this paper we will only apply this theorem when every 
element of $G$ is a diagonal automorphism of $S$. In fact, 
we will only consider the case when $G$ is a subgroup of 
$\Oz(S)$. We now introduce the notion of an ozone subring.

\begin{definition}\label{zzdef2.4}
Let $A$ be a noetherian PI AS regular algebra with center $Z$.
\begin{enumerate}
\item[(1)]
A subring $R$ of $A$ is called {\it ozone} if $R$ is AS 
regular and $Z\subseteq R \subseteq A$.
\item[(2)]
The set of all ozone subrings of $A$ is denoted by $\Phi_Z(A)$.
\item[(3)]
If $R$ is a minimal element in $\Phi_Z(A)$ via inclusion, then 
$R$ is called a {\it mozone} subring of $A$.
\end{enumerate}
\end{definition}

Note that $A$ itself is an ozone subring of $A$, so $\Phi_Z(A)$ 
is not empty. If $Z$ is AS regular (i.e., a commutative 
polynomial ring) or if $A = Z$ is itself commutative, then $Z$ 
is the unique mozone subring of $A$. Assume 
Hypothesis~\ref{zzhyp0.3} and let $H$ be the subgroup of 
$O:=\Oz(S)$ that is generated by reflections. Let $B := S^H$. 
By Theorem~\ref{zzthm2.3}, $B$ is a skew polynomial ring (and 
hence an ozone subring of $S$). We will show that $B$ is a 
mozone subring of $S$. (See also Question~\ref{zzque6.3}(6).)

\begin{proposition}\label{zzpro2.5}
Assume Hypothesis~\ref{zzhyp0.3}. Let $H$ be the subgroup of 
$O$ generated by reflections in $O$. Then $S^H$ is a mozone 
subring of $S$.
\end{proposition}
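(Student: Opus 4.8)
The plan is to show that any ozone subring $R$ of $S$ with $R \subsetneq S^H$ cannot be AS regular, which forces $S^H$ to be minimal in $\Phi_Z(S)$. The starting observation is that since $S^H$ is itself a skew polynomial ring (by Theorem~\ref{zzthm2.3}, because $H$ is generated by reflections), its center is $Z = S^O = (S^H)^{O/H}$, and $O/H$ acts on $S^H$ as a finite group of graded diagonal automorphisms containing \emph{no} reflections (any reflection of $S^H$ coming from $O$ would, together with $H$, generate a larger reflection subgroup of $O$, contradicting maximality of $H$; one must check that a reflection of the subring $S^H$ lifts to or is induced by a reflection of $S$, which follows from the diagonal form of everything in $O$ and Example~\ref{zzex2.2}). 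So without loss of generality I may replace $S$ by $S^H$ and $O$ by the small group $\overline{O} := O/H$, reducing to the case where the ozone group is small and asking to show $Z = S^{\overline{O}}$ is the unique mozone subring.

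Next I would use Theorem~\ref{zzthm2.3} again in the contrapositive: if $R$ is an ozone subring of $S^H$, then $R = (S^H)^{G}$ for the subgroup $G = \{\sigma \in \overline{O} : \sigma|_R = \mathrm{id}_R\}$ — here I need that every ozone subring between $Z$ and $S^H$ is of the form $(S^H)^G$ for some subgroup $G \leq \overline{O}$, which is the Galois-type correspondence alluded to in the introduction and which holds because $S^H$ is a finite module over $Z$ with Galois-type group $\overline{O}$ (this is presumably available from \cite{CGWZ2}). Since $R$ is AS regular by hypothesis, Theorem~\ref{zzthm2.3} says $G$ must be generated by reflections of $S^H$; but $\overline{O}$ is small, so $G$ has no reflections, hence $G = \{1\}$, hence $R = (S^H)^{\{1\}} = S^H$. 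Thus $S^H$ is minimal, i.e., mozone.

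The step I expect to be the main obstacle is establishing the Galois-type correspondence "every AS regular ring $R$ with $Z \subseteq R \subseteq S^H$ has the form $(S^H)^G$" — or at least enough of it to conclude. The clean route is: given such an $R$, set $G := \Aut_{R\text{-alg}}(S^H) \leq \overline{O}$ and show $R = (S^H)^G$. The inclusion $R \subseteq (S^H)^G$ is immediate; the reverse inclusion is where the work lies and is where one needs that $R$ is AS regular (equivalently, by Theorem~\ref{zzthm2.3}, that the relevant Hopf/group action is "full"), together with the fact that $S^H$ is a maximal order or is unramified in codimension one over $R$ so that fixed-ring descent applies. An alternative, possibly cleaner, tactic avoiding the full correspondence: argue directly that if $Z \subseteq R \subseteq S^H$ with $R$ AS regular and $R \neq S^H$, then $\rk((S^H)_R) > 1$, and apply Theorem~\ref{zzthm2.3} to the (automatically diagonal, by the structure of $\overline{O}$) action to produce a reflection of $S^H$ inside $\overline{O}$, contradicting smallness of $\overline{O}$. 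Either way, the crux is promoting "$R$ is AS regular" to "$S^H$ is obtained from $R$ by a reflection group action," which is exactly the content of the Shephard–Todd–Chevalley-type Theorem~\ref{zzthm2.3} once the subring $R$ is identified as a fixed ring.
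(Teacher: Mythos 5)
Your reduction to the small action of $O/H$ on $S^H$ matches the paper's first step (the paper cites the proof of \cite[Proposition 4.12]{KKZ4} for that smallness), but your justification of it is too quick: an element of $O$ can act as a reflection of the weighted skew polynomial ring $S^H$ (fixing all but one of the generators $x_i^{\mff_i}$) without being a reflection of $S$, so the claim that a reflection of $S^H$ is induced by a reflection of $S$ is not literally true. To conclude its image in $O/H$ is trivial one must first correct such an element by members of $H$, using the structure $H\cong\prod_i\langle r_i\rangle$ with $r_i$ of order $\mff_i$ from Proposition~\ref{zzpro2.6}; this is repairable but not the one-liner you give.

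The genuine gap is in the second half. Your argument hinges on the assertion that every AS regular ring $R$ with $Z\subseteq R\subseteq S^H$ has the form $(S^H)^G$ for some subgroup $G\leq O/H$ --- a Galois-type correspondence for intermediate subalgebras. Nothing in this paper, in Theorem~\ref{zzthm2.3}, or in the cited references supplies this: Theorem~\ref{zzthm2.3} only speaks about rings already presented as fixed rings, and intermediate subalgebras of a finite extension need not be fixed subrings of subgroups. Your ``clean route'' via $G:=\Aut_{R\text{-alg}}(S^H)$ plus fixed-ring descent, and your ``alternative tactic,'' both beg the same question, since in each case you must know that $S^H$ arises from $R$ by a group action before Theorem~\ref{zzthm2.3} applies. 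The paper avoids this entirely with a degree argument: smallness of the $O/H$-action gives, by \cite[Theorem 5.5]{BHZ2}, an isomorphism $S^H\#(O/H)\cong\End_{Z}(S^H)$, and the left-hand side has no elements of negative degree; on the other hand, if $Z\subseteq R\subsetneq S^H$ with $R$ AS regular, then $S^H$ is a graded free $R$-module with at least one generator in positive degree by \cite[Lemma 1.10(b)]{KKZ5}, and projection onto such a summand is a negative-degree element of $\End_R(S^H)\subseteq\End_Z(S^H)$, a contradiction. To salvage your approach you would have to prove the intermediate-ring correspondence, which is a substantially harder statement than the proposition itself.
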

\begin{proof}
Since $H$ fixes $S^H$, there is an induced action of $O/H$ on 
$S^H$. By the proof of \cite[Proposition 4.12]{KKZ4}, this 
action is small in the sense of Definition~\ref{zzdef2.1}(3). 
Hence, by \cite[Theorem 5.5]{BHZ2}, we therefore have that 
$S^H \# (O/H) \cong \End_{S^O}(S^H)$ (which is analogous to 
a weighted version of Theorem~\ref{zzthm0.4}).

Now suppose for contradiction that there is an AS regular 
algebra $R$ such that $Z \subseteq R \subsetneq S^H$. By 
\cite[Lemma 1.10(b)]{KKZ5}, $S^H$ is a graded free module 
over $R$. Hence, we can write 
$S^H = R \oplus R(-d_1) \oplus \cdots \oplus R(-d_n)$ for 
some integers $d_i$, at least one of which is positive. Then 
$\End_{S^O}(S^H)$ contains elements of negative degree, which 
contradicts that it is isomorphic to $S^H \# (O/H)$.
\end{proof}

Recall from \eqref{E0.3.1} that 
\[ \mff_i = \gcd\{d_i \mid \exists~d_1,\dots,\widehat{d_i},\dots,d_n ~\text{with}~ x_1^{d_1}\cdots x_i^{d_i}\cdots x_n^{d_n}\in Z\}.\]
Let $M$ be the subring of $S$ generated by 
$\{x_1^{\mff_1},\dots,x_n^{\mff_n}\}$. 
It is straightforward to verify that $M\in \Phi_Z(S)$. 

\begin{proposition}\label{zzpro2.6}
Assume Hypothesis~\ref{zzhyp0.3}.
Let $H$ be the subgroup of $O$ generated by reflections in $O$.
\begin{enumerate}
\item[(1)]
The subgroup $H$ is isomorphic to $\prod_{i=1}^n \langle r_i
\rangle$ where $r_i$ is a diagonal automorphism of $S$ 
determined by 
\[ r_i:x_j\mapsto \begin{cases}
x_j & j\neq i\\
c_i x_i & j=i\end{cases}\] 
for some root of unity $c_i$.
\item [(2)]
For each $i$, let $w_i$ be the order of $c_i$ given in 
part {\rm{(1)}}. For each $i,j$, define 
$q_{ij} = p_{ij}^{w_i w_j}$ and let $\bq = (q_{ij})$.
Then $S^H = \kk_{\bq}[x_1^{w_1},\ldots,x_n^{w_n}]$.
\item[(3)] 
For each $i$, $w_i=\mff_i$. 
\item[(4)]
$\mff_i = \min\{d_i>0 \mid \exists~d_1,\dots,\widehat{d_i},\dots,d_n ~\text{with}~ x_1^{d_1}\cdots x_i^{d_i}\cdots x_n^{d_n}\in Z\}$.
\end{enumerate}
\end{proposition}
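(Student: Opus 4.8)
The plan is to handle the four parts in order, using throughout that every element of $O$ is a diagonal automorphism of $S$, that $O$ is finite (it is a quotient of $(\ZZ/\ell)^n$, since the commuting $\phi_s$ all have order dividing $\ell$), and that by Example~\ref{zzex2.2} a diagonal automorphism of $S$ is a reflection exactly when it scales precisely one generator $x_i$ nontrivially. For \textbf{(1)}: for each $i$, the set $G_i$ consisting of $\Id$ together with all reflections in $O$ scaling only $x_i$ is a subgroup of $O$, and the homomorphism $G_i\to\kk^\times$ recording the scaling factor on $x_i$ is injective (scaling factor $1$ forces the automorphism to fix every generator). Since $G_i$ is finite, its image is a finite, hence cyclic, subgroup of $\kk^\times$, so $G_i=\langle r_i\rangle$ with $r_i(x_i)=c_ix_i$ for a root of unity $c_i$ (and $r_i=\Id$, $c_i=1$ if $G_i$ is trivial). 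Every reflection in $O$ lies in some $G_i$, so $H=\langle r_1,\dots,r_n\rangle$; the $r_i$ commute, and applying any relation $r_1^{e_1}\cdots r_n^{e_n}=\Id$ to $x_j$ gives $c_j^{e_j}=1$, whence $r_j^{e_j}=\Id$ for all $j$ — so the canonical surjection $\prod_i\langle r_i\rangle\to H$ is an isomorphism.

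For \textbf{(2)}: write $w_i=o(c_i)$. Each $r_i$ scales the monomial $x^d=x_1^{d_1}\cdots x_n^{d_n}$ by $c_i^{d_i}$, so $S^H$ is the span of the monomials $x^d$ with $w_i\mid d_i$ for all $i$, equivalently the subalgebra generated by $x_1^{w_1},\dots,x_n^{w_n}$. Moving $x_j^{w_j}$ past $x_i^{w_i}$ uses $w_iw_j$ elementary commutations, each contributing a factor $p_{ij}$, so $x_j^{w_j}x_i^{w_i}=p_{ij}^{w_iw_j}x_i^{w_i}x_j^{w_j}$ and $S^H=\kk_{\bq}[x_1^{w_1},\dots,x_n^{w_n}]$ with $\bq=(p_{ij}^{w_iw_j})$, consistent with Theorem~\ref{zzthm2.3}.

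For \textbf{(3)}: since $H\leq O$, $Z=S^O\subseteq S^H$, so by (2) every central monomial $x^d$ has $w_i\mid d_i$; taking gcds over central monomials gives $w_i\mid\mff_i$. Hence $x_i^{\mff_i}=(x_i^{w_i})^{\mff_i/w_i}\in S^H$ for all $i$, so $M\subseteq S^H$. Both $M$ and $S^H$ lie in $\Phi_Z(S)$ (for $M$ this is noted just before the statement; for $S^H$ it is Theorem~\ref{zzthm2.3}), and $S^H$ is a mozone — i.e. minimal — subring of $S$ by Proposition~\ref{zzpro2.5}, so $M=S^H$. Then $x_i^{w_i}\in M$, and $M$ is spanned by monomials whose $i$-th exponent is divisible by $\mff_i$, so $\mff_i\mid w_i$ and therefore $w_i=\mff_i$ (so also $\bq=(p_{ij}^{\mff_i\mff_j})$).

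For \textbf{(4)}: $x^d$ is central iff $\phi_t(x^d)=x^d$ for every $t$, i.e. iff $\sum_s b_{ts}d_s\equiv0\pmod\ell$ for all $t$, i.e. iff $d$ lies in the sublattice $L=\{e\in\ZZ^n:Be\equiv0\pmod\ell\}\supseteq\ell\ZZ^n$; thus $\mff_i=\gcd\{d_i:d\in L\cap\ZZ_{\geq0}^n\}$, a set which contains $\ell$ since $x_i^\ell\in Z$. Let $g\geq1$ generate the image of $L$ under the $i$-th coordinate projection and pick $d\in L$ with $d_i=g$; adding large positive multiples of the lattice vectors $\ell e_j$ ($j\neq i$) makes all $d_j\geq0$ ($j\neq i$) without changing $d_i=g>0$, producing a central monomial with $i$-th exponent $g$, so $g\in\{d_i:d\in L\cap\ZZ_{\geq0}^n\}\subseteq g\ZZ$ and hence $\mff_i=g$. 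Since any central monomial with positive $i$-th exponent has that exponent a positive multiple of $g=\mff_i$, the minimum in (4) equals $\mff_i$. The main obstacle is (3): once one sees that $M$ is trapped between $Z$ and $S^H$, the minimality of the mozone subring $S^H$ immediately forces $w_i=\mff_i$; the only other mildly delicate point is the shift by multiples of $\ell e_j$ in (4) used to realize the gcd $\mff_i$ by a genuine monomial with nonnegative exponents.
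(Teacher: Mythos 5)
Your proof is correct, and parts (1), (2), and (4) essentially coincide with the paper's argument: you decompose the reflections according to which generator they scale and use cyclicity of finite subgroups of $\kk^{\times}$ for (1), reduce to monomials via the $\ZZ^n$-grading for (2), and your lattice argument for (4) is the paper's Bezout argument in different packaging (the paper makes the Bezout coefficients positive by adding multiples of $\ell$ and then strips off the central factor $x_i^{\beta\ell}$, while you translate a vector of the lattice $L$ by multiples of $\ell\be_j$; both exploit $x_i^{\ell}\in Z$ to realize the gcd by a genuine monomial). The genuine divergence is in part (3), for the divisibility $\mff_i\mid w_i$. The paper constructs an explicit reflection $r_i'$ scaling $x_i$ by a primitive $\mff_i$th root of unity, observes that it fixes $Z$ pointwise (since $Z$ sits inside the subring generated by the $x_j^{\mff_j}$), and invokes the identification of $O$ with the ozone group $\Oz(S)$, cited from \cite{CGWZ2}, to conclude $r_i'\in O$ and hence $r_i'\in H$, which forces $\mff_i\mid w_i$. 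You instead note that $w_i\mid\mff_i$ already gives $M\subseteq S^H$, that $M\in\Phi_Z(S)$ (as stated just before the proposition) and $S^H\in\Phi_Z(S)$ by Theorem~\ref{zzthm2.3}, and then use the minimality of the mozone subring $S^H$ from Proposition~\ref{zzpro2.5} to force $M=S^H$, whence $\mff_i\mid w_i$. This is legitimate and non-circular, since Proposition~\ref{zzpro2.5} is proved independently of Proposition~\ref{zzpro2.6}. The trade-off: your route avoids the external fact $O=\Oz(S)$ but leans on the heavier machinery behind Proposition~\ref{zzpro2.5} (the Auslander-type isomorphism of \cite{BHZ2} and the smallness of the $O/H$-action from \cite{KKZ4}), whereas the paper's explicit construction of $r_i'$ is more elementary and self-contained within the diagonal-automorphism setting.
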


\begin{proof}
(1) Let $H_i$ be the subgroup of $H$ that is generated by 
reflections $r$ of the form 
\[ r(x_j)=\begin{cases} 
	x_j & j\neq i\\
	\alpha_i x_i & j=i
\end{cases}\] 
for some $\alpha_i\in \kk$.
Since every element in $O$ is a diagonal automorphism, 
by Example~\ref{zzex2.2}, every reflection is of the form 
of $r$ given above. Since $H$ is generated by reflections 
in $O$, $H$ is generated by the union of the $H_i$. It is clear
that the product $\prod_{i=1}^n H_i$ is a subgroup of $H$.
Thus $H=\prod_{i=1}^n H_i$. Note that each $H_i$ is a
finite subgroup of $\kk^{\times}$, whence it is cyclic.
Therefore each $H_i$ is of the form $\langle r_i \rangle$ 
where $r_i$ is a diagonal reflection automorphism of $S$.
The assertion follows.

(2) It is clear that $S^H\supseteq 
\kk_{\bq}[x_1^{w_1},\dots,x_n^{w_n}]$. Since $S$ 
is $\ZZ^n$-graded and $H$ consists of 
$\ZZ^n$-graded algebra automorphisms of $S$, 
$S^H$ is $\ZZ^n$-graded. So to prove $S^H\subseteq 
\kk_{\bq}[x_1^{w_1},\dots,x_n^{w_n}]$, 
we only need to consider monomials in $S^H$. Suppose that 
$f:=x_1^{v_1}\cdots x_n^{v_n}$ is an element of $S^H$. Since 
$f=r_i(f)=c_i^{v_i}f$ for all $i$, we see that $w_i\mid v_i$.
The assertion follows. 

(3) By part (2) and the fact that $H\subseteq O$, we have
\[Z=S^O\subseteq S^H=\kk_{\bq}[x_1^{w_1},\dots, x_n^{w_n}].\]
Therefore every monomial in $Z$ is a product of $x_1^{w_1}, 
\dots, x_n^{w_n}$. By definition, $w_i\mid \mff_i$ for all $i$. 
Next we show that $\mff_i\mid w_i$. Let $c'_i$ be a primitive 
$\mff_i$th root of unity. By the definition of the $\mff_i$, we 
obtain that $Z$ is a subring of $M':=\kk_{(q'_{ij})}[x_1^{\mff_1},
\dots,x_{n}^{\mff_n}]$  where $q'_{ij}= p_{ij}^{\mff_i\mff_j}$. 
Let $r'_i$ be the diagonal automorphism of $S$ determined by 
\[ x_j\mapsto 
\begin{cases} 
	x_j & j\neq i\\
	c'_i x_i & j=i.
\end{cases}\] 
It is clear that $r'_i$ 
preserves $M'$ and hence $Z$ so $r_i' \in O$.
Since $r_i'$ is a reflection, therefore $r'_i\in H$
and consequently,
$x_i^{w_i}=r'_i(x_i^{w_i})=(c'_i)^{w_i}x_i^{w_i}$. This implies
that $(c'_i)^{w_i}=1$ or $\mff_i\mid w_i$ as required.

(4) By definition, $\mff_i$ is given by \eqref{E0.3.1}.
For each $i$, there are finitely many central elements, say 
$z(1), \dots, z(s)$, defined by
\[ z(j):=x_1^{d^j_1}\cdots x_i^{d^j_i}\cdots x_n^{d^j_n}\in Z\]
such that $\mff_i=\gcd\{d^{1}_i,\cdots,d^{s}_i\}$.
Write $\mff_i=\sum_{j=1}^s c_j d^j_i$. Choose an integer $\alpha$ 
such that $c'_j:=c_j+\alpha \ell>0$ for all $j$. Then 
$\mff_i= \left(\sum_{j =1}^{s} c'_j d^j_i\right) - \beta \ell$ 
for some $\beta$ and, up to a scalar, we have
$$\prod_{j} z(j)^{c'_j}=
x_1^{d'_1}\cdots x_i^{\mff_i}\cdots x_n^{d'_n} x_i^{\beta\ell}
\in Z.$$
This implies that 
$x_1^{d'_1}\cdots x_i^{\mff_i}\cdots x_n^{d'_n}\in Z$.
The claim follows.
\end{proof}

\begin{theorem}\label{zzthm2.7}
Assume Hypothesis~\ref{zzhyp0.3}.
If $Z$ is Gorenstein, then $\oj_{S}$
is equal to the Jacobian $\fj_{S,O}$.
\end{theorem}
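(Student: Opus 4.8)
The plan is to identify both $\oj_S$ and $\fj_{S,O}$ with the same monomial, by reading off their exponent vectors from the $\ZZ^n$-graded structure of $S_{\hdet^{-1}}$. Writing $x^a := x_1^{a_1}\cdots x_n^{a_n}$ for $a\in\ZZ_{\geq 0}^n$: since $Z=S^O$ is assumed AS Gorenstein, \cite[Theorem 2.4]{KZ} gives that $S_{\hdet^{-1}}:=\{x\in S \mid \sigma(x)=(\hdet\sigma)^{-1}x \text{ for all } \sigma\in O\}$ is a free $Z$-module of rank one, generated by $\fj_{S,O}$. Because $S$ is $\ZZ^n$-graded and each $\phi_i$ acts diagonally on the monomial basis, $S_{\hdet^{-1}}$ is $\ZZ^n$-graded; a standard graded Nakayama argument over the connected $\ZZ_{\geq 0}^n$-graded ring $Z$ then shows $\fj_{S,O}$ may be chosen $\ZZ^n$-homogeneous, hence equal (up to a nonzero scalar) to a single monomial $x^g$, with $S_{\hdet^{-1}}=x^g Z$.

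Next I would pin down exactly which monomials lie in $S_{\hdet^{-1}}$. Using $\phi_i(x_s)=p_{is}x_s$ together with $\hdet\phi_i=\prod_s p_{is}$ from Lemma~\ref{zzlem1.5}(1), and checking the eigenvalue condition only on the generators $\phi_i$ of $O$ (which suffices since $\hdet$ is a group homomorphism), one finds that $x^a\in S_{\hdet^{-1}}$ if and only if $\prod_s p_{is}^{a_s+1}=1$ for all $i$, i.e.\ if and only if $x^a\,\pg_S\in Z$. Combined with $S_{\hdet^{-1}}=x^g Z$, this says precisely that, for each $i$, $g_i=\min\{a_i \mid a\in\ZZ_{\geq 0}^n,\ x^a\,\pg_S\in Z\}$, equivalently $g_i+1=\min\{c_i \mid c\in\ZZ^n,\ c\geq(1,\dots,1),\ x^c\in Z\}$.

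It remains to match this last minimum with $\mff_i$. By Proposition~\ref{zzpro2.6}(4), $\mff_i=\min\{d_i>0 \mid d\in\ZZ_{\geq 0}^n,\ x^d\in Z\}$. Since $\{c\geq(1,\dots,1)\}\subseteq\{d\geq 0,\ d_i\geq 1\}$, we immediately get $g_i+1\geq\mff_i$. For the reverse inequality, start from a central monomial $x^d$ with $d_i=\mff_i$ and multiply it by $\prod_{j:\,d_j=0}x_j^{\ell}$; each $x_j^{\ell}$ is central because $p_{sj}^{\ell}=1$, and $i$ does not occur in this product since $d_i=\mff_i>0$, so the result is a central monomial $x^c$ with all exponents $\geq 1$ and $c_i=\mff_i$. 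Hence $g_i=\mff_i-1$ for every $i$, so $\fj_{S,O}=\prod_{i}x_i^{\mff_i-1}=\oj_S$ up to a nonzero scalar, as desired.

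The only genuine obstacle is the bookkeeping in the first step — replacing the abstract rank-one generator of $S_{\hdet^{-1}}$ by a monomial — and keeping the two ``minimal exponent'' descriptions (the one coming from the generator $x^g$ and the one defining $\mff_i$ via Proposition~\ref{zzpro2.6}(4)) carefully aligned; once $S_{\hdet^{-1}}$ is identified with the span of $\{x^a \mid x^a\,\pg_S\in Z\}$ and its generator is known to be a monomial, the rest is elementary monoid combinatorics. An alternative route would go through the reflection subgroup $H\leq O$, using $S^H=\kk_{\bq}[x_1^{\mff_1},\dots,x_n^{\mff_n}]$ and the small $O/H$-action on $S^H$ from Proposition~\ref{zzpro2.6}, and transport the Jacobian along the tower $Z=(S^H)^{O/H}\subseteq S^H\subseteq S$; but that seems to require more work comparing homological determinants across the two group actions than the direct computation above.
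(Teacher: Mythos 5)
Your proof is correct, but it takes a genuinely different route from the paper's. The paper works through the reflection subgroup $H\leq O$: using $S^H=\kk_{\bq}[x_1^{\mff_1},\dots,x_n^{\mff_n}]$ and the free decomposition $S=\bigoplus_{0\leq s_i\leq \mff_i-1}x_1^{s_1}\cdots x_n^{s_n}S^H$, it first shows unconditionally that $\oj_S=\fj_{S,H}$, and then converts the Gorenstein hypothesis into the statement that the induced $O/H$-action on $S^H$ has trivial homological determinant (via \cite[Theorem 0.2]{KKZ4}), which forces every $g\in O$ to fix $\chi=\prod_i x_i^{\mff_i}$ and hence puts $\oj_S=\chi\,\pg_S^{-1}$ into $S_{\hdet_O^{-1}}$ --- precisely the route you considered and set aside. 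You instead use the Gorenstein hypothesis only once, through \cite[Theorem 2.4]{KZ} (existence of $\fj_{S,O}$, i.e., $S_{\hdet^{-1}}$ free of rank one over $Z$), then exploit the $\ZZ^n$-grading: the generator can be taken to be a monomial $x^g$ (your graded Nakayama step is fine, and can even be replaced by noting that in the $\ZZ^n$-graded domain $S$ any factor of a monomial is a scalar times a monomial), the monomials of $S_{\hdet^{-1}}$ are exactly those $x^a$ with $x^a\pg_S\in Z$, and the componentwise minimality of $g$ matches $\mff_i-1$ by Proposition~\ref{zzpro2.6}(4) together with the padding-by-$x_j^{\ell}$ trick. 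Your argument is more elementary and bypasses $H$ and \cite{KKZ4} entirely; the paper's argument yields extra information along the way ($\oj_S=\fj_{S,H}$ without any Gorenstein assumption, and the explicit eigenvector identity $g(\oj_S)=\hdet^{-1}(g)\oj_S$), and its decomposition \eqref{E2.7.1} is reused later, e.g.\ to identify $\oa_S$ with $\fa_{S,O}$ at the end of the proof of Theorem~\ref{zzthm0.6}. The only small points to make explicit in a polished write-up are that ``$Z$ Gorenstein'' means AS Gorenstein for the connected graded commutative ring $Z$ (as required to invoke \cite[Theorem 2.4]{KZ}), and that checking the eigenvalue condition on the generators $\phi_i$ suffices because $O$ is generated by them and $\hdet$ is a group homomorphism --- both of which you essentially noted.
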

\begin{proof} 
Let $H$ be the subgroup of $O$ generated by reflections in 
$O$. By Proposition~\ref{zzpro2.6}(2,3),
$S^H=\kk[x_1^{\mff_1},\cdots, x_n^{\mff_n}]$. Then 
\begin{align}\label{E2.7.1}
S=\bigoplus_{0\leq s_i\leq \mff_i-1}
x_1^{s_1}\cdots x_n^{s_n} S^H.
\end{align} 
Since $\pg_S\in S_{\hdet_O}$ (see \eqref{E1.4.2}) and 
since the $H$-action on $\chi:=\prod_{i=1}^n x_i^{\mff_i}$ 
is trivial, $\oj_S=\chi (\pg_S)^{-1} \in S_{\hdet_H^{-1}}$. 
The decomposition \eqref{E2.7.1} shows that 
$S_{\hdet_H^{-1}}=\oj_S S^H$
or equivalently, $\oj_S$ is $\fj_{S,H}$. 
Therefore $S_{\hdet_O^{-1}}\subseteq S_{\hdet_H^{-1}}={\oj_S} S^H$. 

We claim that $\oj_S\in S_{\hdet_O^{-1}}$. To see that we note that
$\oj_S=\chi (\pg_S)^{-1}$. For every $g\in O$, let $\overline{g}$
be the induced algebra automorphism of $S^H$. 
Since $Z=(S^H)^{O/H}$ is Gorenstein, then the $O/H$-action
on $S^H$ has trivial homological determinant by \cite[Theorem 0.2]{KKZ4}.
Since $S^H$ is a skew
polynomial ring and $\overline{g}$ is a diagonal action, 
$O/H$-action having trivial homological determinant implies that 
\[g(\chi)=
\overline{g}\left(\prod_{i=1}^n x_i^{\mff_i}\right)
=\prod_{i=1}^n x_i^{\mff_i}=\chi,\]
see \eqref{E1.4.2}. Hence
\[g(\oj_S)=g(\chi \pg_S^{-1})=g(\chi)(g(\pg_S))^{-1}
=\chi \hdet^{-1}(g) (\pg_S)^{-1}=\hdet^{-1}(g)\oj_S.\]
This implies that $\oj_S\in S_{\hdet_O^{-1}}$. Then 
it is easy to check that $S_{\hdet_O^{-1}}=\oj_S S^O$.
In other words, $\oj_S=\fj_{S,O}$. 
\end{proof}

Proposition \ref{zzpro2.6}, and part (4) in particular, make computing the $\mff_i$ straightforward in small dimension, as demonstrated by the following result.

\begin{lemma}\label{lem.fhyper}
Assume Hypothesis~\ref{zzhyp0.3} and let $n=3$. Then
\[ \mff_1 = \gcd(b_{23},\ell), \quad \mff_2 = \gcd(b_{13},\ell), \quad \mff_3 = \gcd(b_{12},\ell).\]
\end{lemma}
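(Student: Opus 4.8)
The plan is to apply Proposition~\ref{zzpro2.6}(4), which characterizes $\mff_i$ as the \emph{minimal} positive exponent $d_i$ occurring in a central monomial $x_1^{d_1}x_2^{d_2}x_3^{d_3}\in Z$, rather than working directly with the $\gcd$ in \eqref{E0.3.1}. First I would translate centrality of a monomial into a linear condition mod $\ell$: the monomial $x_1^{d_1}x_2^{d_2}x_3^{d_3}$ lies in $Z$ if and only if each $\phi_s$ fixes it, which by $\phi_s(x_j)=\xi^{b_{sj}}x_j$ amounts to $\sum_j b_{sj}d_j\equiv 0\pmod{\ell}$ for all $s=1,2,3$; equivalently $\overline{B}(d_1,d_2,d_3)^T=0$ in $\overline{\ZZ}^3$. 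Since $B$ is the $3\times 3$ skew-symmetric matrix with off-diagonal entries $b_{12},b_{13},b_{23}$, the row equations read $b_{12}d_2+b_{13}d_3\equiv0$, $-b_{12}d_1+b_{23}d_3\equiv0$, and $-b_{13}d_1-b_{23}d_2\equiv0$, all mod $\ell$.

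Next I would compute, say, $\mff_1$: by Proposition~\ref{zzpro2.6}(4) it is the least $d_1>0$ for which there exist $d_2,d_3$ solving the system above. The second and third equations give $b_{12}d_1\equiv b_{23}d_3$ and $b_{13}d_1\equiv -b_{23}d_2\pmod{\ell}$. The cleanest route is to exhibit a central monomial with $d_1=\gcd(b_{23},\ell)$ and then show no smaller positive value is possible. For the existence direction, take $d_3$ and $d_2$ to be suitable multiples of $\ell/\gcd(b_{23},\ell)$ times Bézout coefficients so that $b_{23}d_3\equiv b_{12}d_1$ and $-b_{23}d_2\equiv b_{13}d_1$ both hold mod $\ell$; this is solvable precisely because $\gcd(b_{23},\ell)\mid b_{12}\gcd(b_{23},\ell)$ and similarly for the other congruence — I will need to check the first equation $b_{12}d_2+b_{13}d_3\equiv0$ is then automatically satisfied, which should follow from skew-symmetry of $B$ (the three equations are linearly dependent, their ``sum'' along the kernel vector being zero). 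For the minimality direction, from $b_{12}d_1\equiv b_{23}d_3\pmod\ell$ and $b_{13}d_1\equiv-b_{23}d_2\pmod\ell$ one sees $\gcd(b_{23},\ell)$ divides $b_{12}d_1$ and $b_{13}d_1$, and combined with $\gcd(b_{12},b_{13},b_{23},\ell)=1$ from Hypothesis~\ref{zzhyp0.3}, this forces $\gcd(b_{23},\ell)\mid d_1$, hence $d_1\geq\gcd(b_{23},\ell)$. By symmetry of the indices one gets $\mff_2=\gcd(b_{13},\ell)$ and $\mff_3=\gcd(b_{12},\ell)$.

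The main obstacle I anticipate is the bookkeeping in the existence step: verifying that the values of $d_2,d_3$ chosen to satisfy two of the three congruences automatically satisfy the third. I expect this to come down to the observation that $(d_1,d_2,d_3)\mapsto \overline{B}(d_1,d_2,d_3)^T$ has the property that its three coordinate equations sum to zero after multiplying the $s$th by an appropriate entry — concretely, the rank of $\overline B$ over $\overline\ZZ$ is controlled and the kernel is nontrivial of a predictable shape — so I would either argue via the explicit skew-symmetric structure ($\overline B\cdot(b_{23}',-b_{13}',b_{12}')^T$ being handled as in Theorem~\ref{zzthm0.12}(2)) or simply note that any two of the equations imply the third since $b_{12}\cdot(\text{eqn }3) - b_{13}\cdot(\text{eqn }2) + b_{23}\cdot(\text{eqn }1)=0$ identically. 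Alternatively, a slicker proof: Proposition~\ref{zzpro2.6}(2,3) gives $S^H=\kk_{\bq}[x_1^{\mff_1},x_2^{\mff_2},x_3^{\mff_3}]$ with $Z=S^O\subseteq S^H$, and one reads off the $\mff_i$ from the condition that $O/H$ acts on $S^H$; but the direct linear-algebra computation above is the most transparent and is the route I would write up.
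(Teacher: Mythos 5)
Your minimality (lower bound) step is correct and is essentially the paper's argument: from a central monomial with $x_1$-exponent $d_1$ you get $\gcd(b_{23},\ell)\mid b_{12}d_1$, $\gcd(b_{23},\ell)\mid b_{13}d_1$, and trivially $\gcd(b_{23},\ell)\mid b_{23}d_1$ and $\ell d_1$, so $\gcd(b_{12},b_{13},b_{23},\ell)=1$ forces $\gcd(b_{23},\ell)\mid d_1$. The genuine gap is in your existence (upper bound) step. Your justification that the third centrality congruence is ``automatically satisfied'' rests on the claim that any two of the three equations imply the third because $b_{23}\cdot(\text{eqn }1)-b_{13}\cdot(\text{eqn }2)+b_{12}\cdot(\text{eqn }3)=0$ identically. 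That dependence relation only yields $b_{23}\bigl(b_{12}d_2+b_{13}d_3\bigr)\equiv 0\pmod{\ell}$, and $b_{23}$ is a zero-divisor mod $\ell$ precisely in the nontrivial case $\gcd(b_{23},\ell)>1$, so eqn $1$ need not follow. Concretely, with $\ell=4$, $b_{12}=1$, $b_{13}=2$, $b_{23}=2$ and $(d_1,d_2,d_3)=(2,0,1)$, equations $2$ and $3$ hold but equation $1$ gives $2\not\equiv 0\pmod 4$; so independently chosen solutions $d_2,d_3$ of your two congruences can fail centrality. Your fallback, that $\overline{B}(b_{23}',-b_{13}',b_{12}')^T=0$ ``as in Theorem~\ref{zzthm0.12}(2)'', is also not available: that kernel condition is essentially the Gorenstein criterion and fails in general (e.g.\ $\ell=36$, $(b_{12},b_{13},b_{23})=(6,1,10)$, where the second coordinate of $\overline{B}(2,\mp 1,6)^T$ is $12\neq 0$), even though the lemma itself still holds there.

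The clean repair is the paper's: since $B$ is $3\times 3$ skew-symmetric, the integer vector $(b_{23},-b_{13},b_{12})^T$ lies in the kernel of $B$ over $\ZZ$, so $x_1^{b_{23}}x_2^{\ell-b_{13}}x_3^{b_{12}}\in Z$; together with $x_1^{\ell}\in Z$ and the $\gcd$ definition \eqref{E0.3.1} (not the minimum characterization) this gives $\mff_1\mid\gcd(b_{23},\ell)$, and the lower bound finishes the proof. If you insist on Proposition~\ref{zzpro2.6}(4), you must construct a central monomial with $d_1$ exactly $g:=\gcd(b_{23},\ell)$ by a \emph{correlated} choice: writing $b_{23}=g\beta$, $\ell=g\lambda$ and letting $\beta^{-1}$ be an inverse of $\beta$ modulo $\lambda$, take $d_2\equiv-\beta^{-1}b_{13}$ and $d_3\equiv\beta^{-1}b_{12}$ (mod $\ell$); then eqn $1$ holds on the nose and eqns $2,3$ hold mod $\ell$. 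In the counterexample above ($\ell=36$) this correlation is genuinely needed: for one admissible value of $d_3$ no $d_2$ works, so the step you flagged as bookkeeping is where the actual content lies.
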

\begin{proof}
We prove the first equality, as the others are similar. First note that $x_1^{b_{23}}x_2^{\ell-b_{13}}x_3^{b_{12}} \in Z$, and so $\mff_1 \mid b_{23}$.

Since $x^\ell \in Z$, then $\mff_1 \mid \ell$, so $\mff_1 \mid \gcd(b_{23},\ell)$. On the other hand, there exists $a=x_1^{u_1}x_2^{u_2}x_3^{u_3} \in Z$ with $u_1=\mff_1$ by Proposition \ref{zzpro2.6}(4). 
By considering $[x_2,a]=[x_3,a]=0$ we have
\begin{align*}
	b_{23}u_3 &\equiv b_{12}u_1 \mod \ell \\
	b_{23}u_2 &\equiv -b_{13}u_1 \mod \ell.
\end{align*}
Hence, if $d\mid b_{23}$ and $d\mid \ell$, then $d \mid b_{12}u_1$ and $d \mid b_{13}u_1$. But since $d \mid b_{23}u_1$ and $\gcd(b_{23},b_{12},b_{13})=1$, then $d \mid u_1$. Thus, $d \mid \mff_1$, so $\gcd(b_{23},\ell)\mid \mff_1$.
\end{proof}

For $i=1,2,3$, set
\[ g_i = o(\phi_i) = \frac{\ell}{\gcd(b_{ij},b_{ik},\ell)} \text{ with $\{i,j,k\} = \{1,2,3\}$}.\]
Then it follows from Lemma \ref{lem.fhyper} that
\[ g_i = \frac{\ell}{\gcd(\mff_j,\mff_k,\ell)}\text{ with $\{i,j,k\} = \{1,2,3\}$}.\]
Write $g=(g_1,g_2,g_3)$ and $\mff=(\mff_1,\mff_2,\mff_3)$. By Theorem~\ref{zzthm0.6}, 
$Z$ is regular if and only if $g=\mff$.

\section{Auslander's Theorem}\label{zzsec3}

Throughout this section we assume Hypothesis~\ref{zzhyp0.3}.
In this section, we seek to understand when the Auslander map 
\[ S \# O \to \End_{S^O}(S)\] 
is an isomorphism. Recall, from Definition~\ref{zzdef2.1}, 
that a finite subgroup $G \subseteq \Autgr(A)$ is called small 
if it does not contain any reflections. By Example~\ref{zzex2.2}, 
when $A=\SP$, $G$ is small if and only if $G$, considered as a 
subgroup of $\GL(\bigoplus_{i=1}^n \kk x_i)$, is small in the 
classical sense.

\begin{proof}[Proof of Theorem~\ref{zzthm0.4}]
(1) $\Leftrightarrow$ (2):
By \cite[Theorem 0.3]{BHZ1}, the Auslander map is an 
isomorphism for the pair $(S,O)$ if and only if 
$\p(S, O) \geq 2$. 

(1) $\Leftrightarrow$ (3) $\Leftrightarrow$ (4):
Since each $\phi_i$ is a diagonal automorphism, by 
\cite[Theorem 5.5]{BHZ2}, the Auslander map is an isomorphism 
for the pair $(S,O)$ if and only if $O$ is small in the 
classical sense, that is, if and only if $O$, when restricted 
to $\bigoplus_{i=1}^n \kk x_i$, contains no pseudo-reflections 
of $\bigoplus_{i=1}^n \kk x_i$.

(3) $\Leftrightarrow$ (5): If $O$ is small, then, by 
Proposition~\ref{zzpro2.5}, we have that $S$ is the unique 
mozone subring of $S$. Conversely, if $O$ is not small, then 
let $H \leq O$ be the non-trivial subgroup of reflections. By 
Proposition~\ref{zzpro2.6}, $S^H \subsetneq S$ is a mozone 
subring of $S$, and hence $S$ is not a mozone subring.

(3) $\Leftrightarrow$ (6): 
By Proposition~\ref{zzpro2.6}(2,3),
$S^H=\kk[x_1^{\mff_1},\cdots, x_n^{\mff_n}]$. 
Hence $H=\{1\}$ if and only if $\mff_i=1$ for all $i$. 

(6) $\Leftrightarrow$ (7): 
This follows from Proposition \ref{zzpro2.6}(4).
\end{proof}

For the rest of the section we consider the cases of small
$n$. In the $n=2$ case, with $S=\kk_p[x_1,x_2]$ for $p \neq 1$, 
we have $S^O = \kk[x_1^\ell,x_2^\ell]$ where $\ell$ is the 
order of the root of unity $p$. Consequently, $S$ is free over 
$S^O$. It follows that $\End_{S^O}(S)$ has negative degree maps 
and so the Auslander map $S\# O \to \End_{S^O}(S)$ is not an 
isomorphism. Next we consider $n=3$ and $4$.

Let $g = \phi_1^{u_1}\cdots\phi_n^{u_n}\in O$ and write 
$\bu = (u_i)$ for the (column) vector with components $u_i$.
For a vector $\bx$, we use $\bx^T$ to denote its transpose.
The restriction of $g$ to $S_1$ is given by the diagonal matrix 
$\diag(\xi^{v_i})$ where $(v_i)^T = \bv^T = \bu^T \overline{B}$, 
viewing $\bv$ and $\bu$ as elements of $\overline{\ZZ}^n$. It is 
clear that $g$ is a pseudo-reflection if $\bv = \lambda \be_i$ 
for some nonzero $\lambda \in \overline{\ZZ}$ and some 
$1 \leq i \leq n$.  We can therefore express the condition 
of $O$ being small in terms of nonexistence of solutions of 
some linear equations over $\overline{\ZZ}$. Namely, $O$ 
having no pseudo-reflections is equivalent to the equation 
$\bu^T \overline{B}  = \lambda \be_i^T$ having no 
solution for any nonzero $\lambda \in \overline{\ZZ}$. 
Taking transposes, this is equivalent to the equation
\begin{align}\label{E3.0.1}
\overline{B} \by  = \lambda \be_i
\end{align}
having no solutions $\by \in \overline{\ZZ}^n$ for any 
nonzero $\lambda \in \overline{\ZZ}$ and any $1 \leq i \leq n$.

The next lemma allows us to reduce to the case that no $x_i$ 
is central.

\begin{lemma}\label{zzlem3.1}
Suppose $x_1$ is central {\rm{(}}equivalently, $\phi_1=\id${\rm{)}}. 
Let $R$ be the subalgebra of $S$ generated by $x_2,\hdots,x_n$ 
and let $O' = \langle \left.\phi_i\right|_R \mid i=2,\hdots,n\rangle$. 
Then $\p(R,O') = \p(S,O)$ so $(S,O)$ satisfies Auslander's Theorem
if and only if $(R, O')$ does.
\end{lemma}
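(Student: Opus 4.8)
The plan is to exhibit an explicit isomorphism between the invariant-ring situations for $(S,O)$ and $(R,O')$ that respects the structures appearing in the definition of pertinency. First I would record the basic structural fact: since $x_1$ is central, $S = R[x_1;\,\id] = R \otimes_{\kk} \kk[x_1]$ as algebras (the polynomial variable $x_1$ is genuinely central, so this is an honest polynomial extension), and every $\phi_i$ with $i \geq 2$ fixes $x_1$, hence acts on $S$ as $\left.\phi_i\right|_R \otimes \id$. Likewise $\phi_1 = \id$ contributes nothing to $O$, so the natural map $O' \to O$, $\left.\phi_i\right|_R \mapsto \phi_i$, is a well-defined surjective group homomorphism; I would check it is injective by observing that the action of $O$ on the subspace $\bigoplus_{i\geq 2}\kk x_i$ already determines each element (an element of $O$ is diagonal and fixes $x_1$ automatically), so $|O| = |O'|$ and $O \cong O'$. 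Write $G$ for this common group.

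Next I would propagate this tensor decomposition through all the objects in $\eqref{E0.3.6}$. We get $S^O = R^{O'} \otimes \kk[x_1]$, hence $\GKdim(S^O) = \GKdim(R^{O'}) + 1$ and $\GKdim(S) = \GKdim(R) + 1$; since $S$ is a polynomial extension of $R$, both sides gain exactly one. For the smash product, $S \# O \cong (R \# O') \otimes \kk[x_1]$, using that $x_1$ is central and $G$-fixed so it is central in the smash product as well. Under this identification the element $f_O = \sum_{g \in O} 1 \# g$ corresponds to $f_{O'} \otimes 1$, because the sum over $O$ matches the sum over $O'$ term by term and involves no $x_1$. Therefore the two-sided ideal $(f_O) \subseteq S\#O$ equals $(f_{O'}) \otimes \kk[x_1]$, and the quotient is $\bigl((R\# O')/(f_{O'})\bigr) \otimes \kk[x_1]$. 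Taking $\GKdim$ and again using additivity over the central polynomial variable,
\[
\GKdim\!\left(\frac{S\#O}{(f_O)}\right) = \GKdim\!\left(\frac{R\#O'}{(f_{O'})}\right) + 1.
\]
Subtracting, the $+1$'s cancel and $\p(S,O) = \GKdim(S) - \GKdim\bigl((S\#O)/(f_O)\bigr) = \GKdim(R) - \GKdim\bigl((R\#O')/(f_{O'})\bigr) = \p(R,O')$.

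Finally, the statement about Auslander's Theorem follows immediately: by the equivalence (1)$\Leftrightarrow$(2) of Theorem~\ref{zzthm0.4} (equivalently by \cite[Theorem 0.3]{BHZ1}), which applies to $(S,O)$ and — since $R$ is again a PI skew polynomial ring with $O'$ a group of graded diagonal automorphisms — also to $(R,O')$, Auslander's Theorem holds for a pair exactly when its pertinency is $\geq 2$; equality of pertinencies then transfers the property in both directions.

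The step I expect to need the most care is the additivity of Gelfand--Kirillov dimension under the central polynomial adjunction at the level of the quotient algebra $(R\#O')/(f_{O'})$: additivity $\GKdim(C \otimes \kk[t]) = \GKdim(C) + 1$ is standard when $C$ is, say, finitely generated (here $C = (R\#O')/(f_{O'})$ is a finitely generated $\kk$-algebra since $R\#O'$ is noetherian), so the main obstacle is really just bookkeeping — confirming the identifications $S\#O \cong (R\#O')\otimes\kk[x_1]$ and $f_O \leftrightarrow f_{O'}\otimes 1$ are ring isomorphisms carrying one distinguished element to the other, after which everything is a short dimension count.
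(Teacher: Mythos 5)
Your proposal is correct and follows essentially the same route as the paper: since every element of $O$ fixes the central variable $x_1$, one identifies $S=R[x_1]$, $S\#O \cong (R\#O')[x_1]$ with $f_O$ corresponding to $f_{O'}$, and then additivity of $\GKdim$ under adjoining a central polynomial variable makes the two $+1$'s cancel, after which the Auslander statement follows from the pertinency criterion (Theorem~\ref{zzthm0.4}(1)$\Leftrightarrow$(2)). Your write-up simply makes explicit the identifications ($O\cong O'$, centrality of $x_1\#1$, the ideal correspondence) that the paper's displayed chain of equalities leaves implicit.
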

\begin{proof}
Since $O$ acts trivially on $x_1$, by \eqref{E0.3.6}, we have 
\begin{align*}
\p(S,O) &= \GKdim(S) - \GKdim(S\#O / (f_{O}) ) \\
    &= \GKdim(R[x_1]) - \GKdim(R[x_1]\#O / (f_{O}) ) \\
    &= (\GKdim(R) + 1) - \GKdim((R\#O' / (f_{O'}) )[x_1]) \\
    &= (\GKdim(R) + 1) - (\GKdim(R\#O' / (f_{O'}) ) + 1) \\
    &= \GKdim(R) - \GKdim(R\#O' / (f_{O'}) ) \\
    &= \p(R,O').\qedhere
\end{align*}
\end{proof}

We begin by considering the case $n=3$. For a root of unity
$p$, let $o(p)$ denote its order. 

\begin{proposition}\label{zzpro3.2}
Assume Hypothesis~\ref{zzhyp0.3} with $n=3$. Then $(S, O)$ 
satisfies Auslander's Theorem if and only if each $p_{ij}$ is 
a primitive $\ell$th root of unity for all $i \neq j$.
\end{proposition}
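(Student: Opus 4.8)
The plan is to recast the statement in terms of the integers $b_{ij}$ and then appeal to results already established. The first observation is a dictionary: since $\xi$ is a primitive $\ell$th root of unity and $p_{ij}=\xi^{b_{ij}}$, the order of $p_{ij}$ is $\ell/\gcd(b_{ij},\ell)$, so ``$p_{ij}$ is a primitive $\ell$th root of unity for all $i\neq j$'' is equivalent to ``$\gcd(b_{ij},\ell)=1$ for all $i\neq j$''.

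With this reformulation in hand, the cleanest route is to combine Theorem~\ref{zzthm0.4} with Lemma~\ref{lem.fhyper}. By Theorem~\ref{zzthm0.4}, Auslander's Theorem holds for $(S,O)$ if and only if $\oj_S=1$, i.e.\ $\mff_1=\mff_2=\mff_3=1$. Lemma~\ref{lem.fhyper} evaluates these numbers: for $n=3$ one has $\mff_1=\gcd(b_{23},\ell)$, $\mff_2=\gcd(b_{13},\ell)$ and $\mff_3=\gcd(b_{12},\ell)$. Hence $\mff_i=1$ for all $i$ precisely when $\gcd(b_{ij},\ell)=1$ for every pair $i\neq j$, which by the dictionary above is exactly the asserted condition on the $p_{ij}$.

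A self-contained alternative, staying inside the present section, is to argue directly from \eqref{E3.0.1} and Example~\ref{zzex2.2}: $O$ is small exactly when the equation $\overline{B}\by=\lambda\be_i$ has no solution $\by\in\overline{\ZZ}^3$ with $\lambda\neq 0$. For the ``if'' direction, assume all $\gcd(b_{ij},\ell)=1$ and that such a solution existed, say with $i=1$; multiplying the first coordinate equation by $b_{23}$ and substituting the other two coordinate relations makes the cross terms cancel, leaving $b_{23}\lambda\equiv 0$, whence $\lambda\equiv 0$ since $b_{23}$ is a unit modulo $\ell$ (and symmetrically for $i=2,3$). For the ``only if'' direction, if $d:=\gcd(b_{jk},\ell)>1$ for some pair, write $\ell=dk$ and look for a pseudo-reflection $\by$ of the form $(0,k\alpha,k\beta)$ (up to reindexing) when $d=\gcd(b_{23},\ell)$; such a $\by$ solves $\overline{B}\by=k(b_{12}\alpha+b_{13}\beta)\be_1$, and one can choose $\alpha,\beta$ making this a nonzero multiple of $\be_1$ because $d$ cannot divide all of $b_{12},b_{13},b_{23}$ — otherwise $d\mid\gcd(\{b_{st}\}_{s,t},\ell)=1$, contradicting Hypothesis~\ref{zzhyp0.3}.

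The argument is essentially bookkeeping, and I would present the first route as the proof. The one point that genuinely needs care is the last step of the second route, namely exhibiting an \emph{actual} pseudo-reflection once some $\gcd(b_{ij},\ell)>1$: this is where the global coprimality $\gcd(\{b_{st}\}_{s,t},\ell)=1$ of Hypothesis~\ref{zzhyp0.3} is used, to exclude the degenerate possibility that $d$ divides two of the three off-diagonal entries simultaneously.
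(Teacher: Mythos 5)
Your proposal is correct, and the route you say you would present is genuinely different from the paper's. The paper proves Proposition~\ref{zzpro3.2} entirely inside Section~\ref{zzsec3}: it first uses Lemma~\ref{zzlem3.1} to reduce to the case that no $x_i$ is central; for the forward direction it notes that $\phi_1^{o(p_{12})}=\diag(1,1,p_{13}^{o(p_{12})})$ would be a reflection unless $o(p_{13})\mid o(p_{12})$, so smallness forces all the orders $o(p_{ij})$ to coincide (hence to equal $\ell$ by minimality of $\ell$); and for the converse it shows, by the same cancellation you perform, that \eqref{E3.0.1} has no solution with $\lambda\neq 0$ when every $\gcd(b_{ij},\ell)=1$. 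You instead combine Theorem~\ref{zzthm0.4}(6) with Lemma~\ref{lem.fhyper}, so the whole statement collapses to the dictionary $\mff_1=\gcd(b_{23},\ell)$, $\mff_2=\gcd(b_{13},\ell)$, $\mff_3=\gcd(b_{12},\ell)$; this is shorter, needs no reduction to the case of noncentral generators, and hands you Theorem~\ref{zzthm0.5}(2) in the $b_{ij}$-form immediately, at the cost of importing the $\mff_i$-machinery of Section~\ref{zzsec2} (there is no circularity, since Theorem~\ref{zzthm0.4} and Lemma~\ref{lem.fhyper} are established before Proposition~\ref{zzpro3.2}). Your second, self-contained route is essentially the paper's own argument in different clothing: the reflection you build from $\by=(0,k\alpha,k\beta)$ can always be taken with $(\alpha,\beta)=(1,0)$ or $(0,1)$, i.e.\ it is a power of $\phi_2$ or $\phi_3$, which is exactly how the paper detects failure of smallness, and your appeal to $\gcd(\{b_{ij}\},\ell)=1$ to rule out the degenerate case plays the same role as the paper's use of the minimality of $\ell$; both versions of that step are correct.
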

\begin{proof}
By Lemma~\ref{zzlem3.1}, we may assume that no $\phi_i=\Id$.

Assume $O$ is small so that no power of the $\phi_i$ are reflections. 
Note that $\phi_1^{o(p_{12})} = \diag(1,1,p_{13}^{o(p_{12})})$. 
By hypothesis, this implies that $o(p_{13})$ divides $o(p_{12})$. 
Using the same logic on the other $\phi_i$ gives 
$o(p_{12}) = o(p_{13}) = o(p_{23})$. That is, each of the $p_{ij}$ is a 
primitive $\ell$th root of unity.

Conversely, assume that each of the $p_{ij}$ is a primitive 
$\ell$th root of unity and so $\gcd(b_{ij}, \ell) = 1$ for 
each $b_{ij}$. We wish to show that equation \eqref{E3.0.1} has 
no solutions. Now we compute
\[ \overline{B} \by = \overline{B} \begin{bmatrix}
y_1 \\ y_2 \\ y_3
\end{bmatrix}  = \begin{bmatrix}
y_2 b_{12} + y_3 b_{13} \\
-y_1 b_{12} + y_3 b_{23} \\
-y_1b_{13} - y_2b_{23}
\end{bmatrix}.\]
Without loss of generality, suppose that the first two 
entries of $\overline{B} \by$ are equal to $0$. Then
\begin{align*}
b_{12}(y_1 b_{13} + y_2 b_{23}) 
&= b_{13}(y_1 b_{12}) + b_{23}(y_2 b_{12}) \\
&= b_{13}(y_3 b_{23}) - b_{23}(y_3 b_{13}) = 0
\end{align*}
and hence $\overline{B} \by = 0$. Therefore, equation 
\eqref{E3.0.1} has no solutions and so $O$ is small.
\end{proof}

\begin{example}\label{zzex3.3}
As a consequence of the proof of Proposition~\ref{zzpro3.2}, 
when $n = 3$, if no power of any $\phi_i$ is a reflection, 
then $O$ is small. By contrast, in the $n = 4$ case, it is 
possible for $O$ to contain a reflection even if no power of 
any $\phi_i$ is a reflection.

Let $\xi$ be a primitive sixth root of unity and set
\begin{align*}
p_{12} &= p_{13} = p_{23} =\xi \\
p_{14} &= p_{24} = p_{34} = -1.
\end{align*}
Then
\begin{align*}
\phi_1 &= \diag(1,\xi,\xi,-1)  &
\phi_2 &= \diag(\xi\inv,1,\xi,-1) \\
\phi_3 &= \diag(\xi\inv,\xi\inv,1,-1) &
\phi_4 &= \diag(-1,-1,-1,1).
\end{align*}
(Here we consider the map $\phi_i$ as a matrix form when 
restricted to the degree 1 part of $S$.) 
So no power of any $\phi_i$ is a reflection. However,
\[ \psi = \phi_1\phi_2\phi_3 = \diag(\xi^{-2},1,\xi^2,-1)\]
and so $\psi^3=\diag(1,1,1,-1)$. That is, $\psi^3$ is a 
reflection. By an easy computation, $\mff_i=2$ for
all $i=1,2,3,4$. As a consequence, 
\[x_1^2 x_2^2x_3^2x_4^2=(\oj_S)^2=(\oa_S)^2=
(\pg_S)^2=\od_S\]
up to nonzero scalars. Using the results stated in the 
introduction, we have
\begin{enumerate}
\item[(1)]
$(S,O)$ does not satisfy Auslander's Theorem as $\mff_1\neq 1$
[Theorem~\ref{zzthm0.4}(6)],
\item[(2)]
$S$ is not Calabi--Yau as $\pg_S\not\in Z$ 
[Theorem~\ref{zzthm0.10}(4)], and
\item[(3)]
$Z$ is not Gorenstein as $\oj_S \; \pg_S\not\in Z$
[Theorem~\ref{zzthm0.11}(4)].
\end{enumerate}
\end{example}

We will shortly give necessary and sufficient conditions on 
the entries $b_{ij}$ of $B$ for $O$ to be small when $n = 4$. 
The following result gives a necessary condition for any $n$, 
and is of independent interest.

\begin{proposition}\label{zzpro3.4}
Assume Hypothesis~\ref{zzhyp0.3}.
If $O$ is small, then $\ell \mid \pf(B)$.
\end{proposition}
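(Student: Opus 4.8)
The plan is to prove the contrapositive: if $\ell \nmid \pf(B)$, then $O$ contains a pseudo-reflection, so $O$ is not small. Since $\pf(B)=0$ whenever $n$ is odd, the statement holds trivially in that case, so I would assume $n$ is even from here on.

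The engine of the argument is the \emph{Pfaffian adjugate} $\Padj(B)$. For an $n\times n$ skew-symmetric matrix $B$ with $n$ even, there is a matrix $\Padj(B)$ whose diagonal entries are $0$ and whose off-diagonal $(i,j)$ entry is, up to an explicit sign, the Pfaffian $\pf(B_{\hat{i}\hat{j}})$ of the $(n-2)\times(n-2)$ submatrix of $B$ obtained by deleting rows and columns $i$ and $j$; these entries are integer polynomials in the $b_{ij}$, so $\Padj(B)\in M_n(\ZZ)$. It satisfies the Pfaffian analogue of the cofactor identity,
\[ B\cdot \Padj(B) \;=\; \pf(B)\, I_n .\]
I would either invoke this as a classical fact about Pfaffians or deduce it from the row-expansion of the Pfaffian recalled above: the ``true'' expansions produce the diagonal entries $\pf(B)$ and the ``false'' expansions produce the vanishing off-diagonal entries, exactly as for $\adj$ and $\det$. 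Being a polynomial identity over $\ZZ$, it descends modulo $\ell$ to $\overline{B}\cdot\overline{\Padj(B)} = \overline{\pf(B)}\, I_n$ over $\overline{\ZZ}$.

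Granting this, the conclusion is short. Suppose $\ell\nmid\pf(B)$, so that $\lambda:=\overline{\pf(B)}$ is nonzero in $\overline{\ZZ}$, and let $\by=(y_i)\in\overline{\ZZ}^n$ be the first column of $\overline{\Padj(B)}$. Then $\overline{B}\by=\lambda\be_1$, and $\by\neq 0$ because $\lambda\be_1\neq 0$. Thus $\by$ solves \eqref{E3.0.1} with $i=1$ and $\lambda\neq 0$; concretely, setting $g=\phi_1^{y_1}\cdots\phi_n^{y_n}$, the restriction of $g$ to $S_1$ fixes $x_s$ for all $s\neq 1$ and sends $x_1\mapsto \xi^{-\pf(B)}x_1$ with $\xi^{-\pf(B)}\neq 1$, so $g$ is a pseudo-reflection. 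By the discussion preceding \eqref{E3.0.1}, this contradicts the smallness of $O$.

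The main technical point is therefore to pin down (or locate a reference for) the identity $B\cdot\Padj(B)=\pf(B)\,I_n$ with $\Padj(B)$ integral; once that is in place, the argument is essentially immediate. The one genuine subtlety — and the reason the proposition is not entirely trivial — is that the ordinary adjugate does not suffice: $\overline{B}\cdot\adj(\overline{B}) = \det(\overline{B})\,I_n = \overline{\pf(B)}^2\,I_n$, and $\overline{\pf(B)}^2$ can vanish in $\overline{\ZZ}$ even when $\overline{\pf(B)}$ does not (for instance $\ell=4$, $\pf(B)=2$), in which case $\adj(\overline{B})$ yields no pseudo-reflection. Passing from $\adj$ to $\Padj$ is precisely what repairs this; everything else is bookkeeping with \eqref{E3.0.1}.
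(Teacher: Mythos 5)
Your proposal is correct and follows essentially the same route as the paper: prove the contrapositive using the identity $B\,\Padj(B)=\pf(B)I$ (which the paper cites from B\"ar rather than re-deriving), then take $\by=\Padj(B)\be_i$ to produce a solution of \eqref{E3.0.1} and hence a reflection in $O$. Your closing observation that the ordinary adjugate would only give $\overline{\pf(B)}^2$ and thus fails when $\ell$ is not squarefree is a nice articulation of why the Pfaffian adjugate is the right tool, but it does not change the argument.
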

\begin{proof}
When $n$ is odd, $\pf(B) = 0$, and so the statement is 
vacuously true. Now suppose $n$ is even. We will prove the 
contrapositive: if $\pf(B)\ne 0$ in $\overline{\ZZ}$, then 
$O$ contains a reflection. 

Recall the adjugate matrix $\adj(B)$ is defined by the 
property
\[ \adj(B)B=B\adj(B)=\det(B)I.\]
There is a Pfaffian version of this for skew symmetric $B$ 
(with $n$ even), defined by
\[ \Padj(B)B=B\Padj(B)=\pf(B)I\]
\cite[Corollary 1, p.46]{Ba}. If $\pf(B)\not\equiv 0 \pmod{\ell}$, 
we can use the above to solve the equation
$B \by=\pf(B)\be_i$. For any $i$, 
\[ \by =\Padj(B)\be_i.\]
By the discussion preceding equation \eqref{E3.0.1}, this 
produces a reflection in $O$.
\end{proof}

\begin{proposition}\label{zzpro3.5}
Assume Hypothesis~\ref{zzhyp0.3} and let $n = 4$. Then $O$ 
is small if and only if the following two conditions hold:
\begin{enumerate}
\item[(1)]
$\pf(B) \equiv 0 \pmod{\ell}$ and
\item[(2)] 
there does not exist an index $j$ and an integer $k$ such 
that $k b_{ij} \equiv 0 \pmod{\ell}$ for all but one $i$.
\end{enumerate}
\end{proposition}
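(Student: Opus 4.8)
The plan is to use the reformulation recorded just before~\eqref{E3.0.1}: by Example~\ref{zzex2.2}, $O$ fails to be small precisely when the equation $\overline B\by=\lambda\be_i$ has a solution $\by\in\overline{\ZZ}^4$ for some index $i$ and some nonzero $\lambda\in\overline{\ZZ}$, equivalently when some $g=\prod_s\phi_s^{u_s}$ restricts on $S_1$ to $\diag(\xi^{v_s})$ with exactly one $v_s\not\equiv 0\pmod{\ell}$. I would prove the two directions separately.

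Necessity is straightforward. Condition~(1) is exactly Proposition~\ref{zzpro3.4}. For~(2): if $O$ is small it contains no reflection, hence no power $\phi_j^k$ is a reflection; since $\phi_j^k\restrict{S_1}=\diag(\xi^{kb_{js}})_s$ and $kb_{js}=-kb_{sj}$, the assertion ``$\phi_j^k$ restricts to a pseudo-reflection on $S_1$ for some $j$ and $k$'' unwinds to exactly the negation of~(2), namely that $kb_{ij}\equiv 0\pmod{\ell}$ for all but one $i$. Thus (2) holds.

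For sufficiency I assume $\ell\mid\pf(B)$ and~(2) and suppose, for contradiction, that $\overline B\by=\lambda\be_m$ for some $\by\in\overline{\ZZ}^4$, some index $m$, and some $\lambda\not\equiv 0$; after permuting indices take $m=4$. Since $\lambda\ne 0$ in $\overline{\ZZ}$, I choose a prime $p\mid\ell$ with $p^a$ the exact power of $p$ dividing $\ell$ and $\lambda\not\equiv 0\pmod{p^a}$, and work over $R:=\ZZ/p^a\ZZ$, noting $p^a\mid\pf(B)$. The core computation is this: \emph{if $b_{ab}$ is a unit of $R$ for some pair $a\ne b$ with $4\notin\{a,b\}$} --- after relabeling $\{1,2,3\}$ we may take $(a,b)=(1,2)$ --- then, solving the first two coordinate equations of $\overline B\by=\lambda\be_4$ for $y_1,y_2$ in terms of $y_3,y_4$ and substituting into the last two, one finds, using the Pfaffian identity $\pf(B)=b_{12}b_{34}-b_{13}b_{24}+b_{14}b_{23}$,
\[ (\overline B\by)_3\equiv b_{12}^{-1}\,\pf(B)\,y_4 \quad\text{and}\quad (\overline B\by)_4\equiv -b_{12}^{-1}\,\pf(B)\,y_3 \pmod{p^a};\]
since $p^a\mid\pf(B)$, the fourth equation $(\overline B\by)_4=\lambda$ then forces $\lambda\equiv 0\pmod{p^a}$, a contradiction. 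It remains to produce such a unit entry. By Hypothesis~\ref{zzhyp0.3}, $\gcd(b_{ij},\ell)=1$ over all $i,j$, so some $b_{ij}$ is a $p$-adic unit, and I claim one of the three entries $b_{12},b_{13},b_{23}$ avoiding the index $4$ must be. Indeed, if these are all non-units then the $p$-adic unit lies among $b_{14},b_{24},b_{34}$; relabeling $\{1,2,3\}$ so that it is $b_{34}$ (which merely permutes $b_{12},b_{13},b_{23}$), condition~(2) --- which implies, prime by prime, that in each row of $B$ either the two smallest off-diagonal $p$-valuations coincide or the whole row vanishes mod $p^a$ --- applied to row $4$ forces $b_{14}$ or $b_{24}$ to be a unit; but then this same statement is violated on row $1$ (resp.\ row $2$), a contradiction. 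This completes the argument.

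I expect the obstacle to lie entirely in the sufficiency direction, and within it in the routine-but-fiddly bookkeeping of two steps: (a) justifying the ``prime by prime'' translation of~(2) --- given a row whose two smallest off-diagonal $p$-valuations differ, one must exhibit an explicit exponent $c\ge 0$ and the integer $k:=p^c(\ell/p^a)$, and check that $kb_{ij}\equiv 0\pmod{\ell}$ then holds for precisely the remaining three indices $i$ while $k\not\equiv 0\pmod{\ell}$; and (b) verifying the two displayed congruences, which reduce to the definition of $\pf(B)$ together with cancellations such as $b_{13}b_{23}-b_{23}b_{13}=0$ and $b_{14}b_{24}-b_{24}b_{14}=0$. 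The reduction to a single prime power $p^a$ via the Chinese Remainder Theorem and the relabeling case analysis for locating the unit entry are where care is most needed; neither part is conceptually difficult.
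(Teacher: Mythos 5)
Your proof is correct, and while the necessity direction coincides with the paper's (condition (1) via Proposition~\ref{zzpro3.4}, condition (2) by observing that its negation makes some $\phi_j^k$ a pseudo-reflection, i.e.\ produces a solution of \eqref{E3.0.1}), your sufficiency argument takes a genuinely different route. The paper argues globally: from $\overline{B}\bu=\lambda\be_4$ it multiplies once by the Pfaffian adjugate, getting $\lambda\overline{\Padj(B)}\be_4=\overline{\pf(B)\bu}=0$, hence $\lambda b_{12}\equiv\lambda b_{13}\equiv\lambda b_{23}\equiv 0\pmod{\ell}$, and then invokes condition (2) three times in succession to propagate these congruences to $b_{14},b_{24},b_{34}$, finishing with $\gcd(\{b_{ij}\},\ell)=1$. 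You instead reduce to a single prime power $p^a\,\|\,\ell$ with $\lambda\not\equiv 0\pmod{p^a}$, use condition (2) (in your prime-by-prime reformulation, which is correct: a unique minimal capped $p$-valuation in a row yields a reflection $\phi_j^k$ with $k=p^c(\ell/p^a)$) together with the gcd hypothesis to place a unit entry among $b_{12},b_{13},b_{23}$, and then eliminate $y_1,y_2$ explicitly; your two displayed congruences are correct computations and row $4$ alone forces $\lambda\equiv 0\pmod{p^a}$, the desired contradiction. Both arguments ultimately rest on the identity $\pf(B)=b_{12}b_{34}-b_{13}b_{24}+b_{14}b_{23}$, which is exactly what $\Padj(B)B=\pf(B)I$ packages; the paper's version buys brevity and avoids localization and the relabeling case analysis by citing B\"ar's formula for $\Padj(B)$, while yours is more elementary and self-contained (only the explicit $4\times 4$ Pfaffian identity and linear elimination over $\ZZ/p^a\ZZ$), at the cost of the CRT reduction, the translation of (2) into valuation data, and the case analysis locating the unit entry --- steps you correctly identify as the fiddly ones, and which do all go through.
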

\begin{proof}
As discussed at the beginning of this section, the smallness 
of $O$ is equivalent to equation~\eqref{E3.0.1} having no 
solutions.

The previous proposition shows that if $O$ is small, then 
$\pf(B) \equiv 0 \pmod{\ell}$, so condition (1) holds. 
Further, if there did exist an index $i$ and an integer $k$ 
such that $k b_{ij} \equiv 0 \pmod{\ell}$ for all but one 
$j$, then $\overline{B} (k \be_j) = kb_{ij} \be_i$, 
and so we have a solution to \eqref{E3.0.1}. Hence, if $O$ is 
small then conditions (1) and (2) hold.

Conversely, suppose that conditions (1) and (2) hold. We 
wish to show that \eqref{E3.0.1} does not have any solutions.
Without loss of generality, suppose to the contrary that 
there is a solution, say $\bu$, when $i = 4$, so that $\lambda\be_4=\overline{B\bu}$. Multiplying both sides by $\Padj(B)$ gives
\[ \lambda\overline{\Padj(B)}\be_4=\overline{\Padj(B)Bu}=\overline{\pf(B)I\bu} = 0.\]
Using \cite[Definition 1, p.46]{Ba}, one easily computes that \[ \lambda\overline{\Padj(B)}\be_4=\lambda\overline{(-b_{23},b_{13},-b_{12},0)^t}.\]

Now since $b_{12}\lambda \equiv b_{13}\lambda \equiv 0 
\pmod{\ell}$, condition (2) implies that 
$b_{14} \lambda \equiv 0 \pmod{\ell}$. Similarly, since 
$b_{12} \lambda \equiv b_{23} \lambda \equiv 0 \pmod{\ell}$, 
then $b_{24} \lambda \equiv 0 \pmod {\ell}$. And since 
$b_{14}\lambda \equiv b_{24}\lambda \equiv 0 \pmod {\ell}$, 
then $b_{34} \lambda \equiv 0 \pmod{\ell}$. But since 
$\lambda \neq 0$, this implies that $\gcd(b_{ij},\ell) \neq 1$, 
which is a contradiction.
\end{proof}

Now we are ready to prove Theorem~\ref{zzthm0.5}.

\begin{proof}[Proof of Theorem~\ref{zzthm0.5}]
Note that the case $n=2$ is trivial. The case $n=3$ follows 
from Proposition~\ref{zzpro3.2} while the $n=4$ case follows 
from Proposition~\ref{zzpro3.5}.
\end{proof}

\begin{remark}\label{zzrem3.6}
We remark that condition (2) in Proposition~\ref{zzpro3.5} 
is equivalent to the condition that for all $1 \leq i \leq 4$, 
no  $\phi_i^k$ is a reflection. Hence, in both the cases 
$n = 3$ and $n = 4$, the smallness of $O$ is equivalent to 
the Pfaffian being $0$ (which is automatic when $n = 3$) and 
no power of a generator $\phi_i$ being a reflection. In 
Example~\ref{zzex3.3}, the Pfaffian of $B$ was nonzero.
\end{remark}

\begin{remark}\label{zzrem3.7}
Let $R$ be an AS regular algebra satisfying $\gldim(R)\geq 2$ 
and $G$ a finite subgroup of $\Autgr(R)$. We say $R^G$ has 
\emph{graded isolated singularities} if $\p(R,G) = \GKdim(R)$. 

Consider the case that $R=\SP$ on $n$ variables and $G=O$ as above. 
By \cite[Lemma 5.4]{BHZ2}, it suffices to compute $\p(A,G)$ 
where $A=\kk[x_1,\hdots,x_n]$. But in this setting, $\p(A,G)=n$ 
is equivalent to $G$ acting freely on $A_1\backslash\{0\}$
(see the references given in \cite[p.4320]{CYZ}). It is clear 
that this fails since, for example, $\phi_i(x_i)=x_i$, Hence, 
choosing any nontrivial $\phi_i$ (one of which must exist if 
$R$ is noncommutative), shows that $G$ \emph{does not} act 
freely on $A_1$. Thus $\ZSP$ does not have isolated singularities. 

On the other hand, there are other AS regular algebras $A$ such 
that $Z(A)$ has an isolated singularity \cite{CGWZ2}.
\end{remark}

\section{Regular center}\label{zzsec4}

Throughout this section we assume Hypothesis~\ref{zzhyp0.3}. 
We consider the question of determining when the center of 
$S$ is regular (equivalently, $Z$ is a polynomial ring). 

Recall that $\overline{B}$ is the matrix obtained from 
$B$ by reduction mod $\ell$. Let $\overline{K}$ denote 
the kernel of $\overline{B}$ and $K$ be its inverse image 
in $\ZZ^n$. For $i=1,\ldots,n$, denote by $K_i \subseteq \ZZ$ 
the projection of $K$ onto its $i$th component. If $p$ is 
a prime number, then let $\ZZ_{(p)}$ denote the localization 
of $\ZZ$ at the prime ideal $(p)$. If $M$ is a $\ZZ$-module 
and $m \in M$, we use the notation $m \otimes 1$ to denote 
the image of $m$ in $M \otimes \ZZ_{(p)}$.

\begin{lemma}\label{zzlem4.1}
Assume Hypothesis~\ref{zzhyp0.3}.
\begin{enumerate}
\item[(1)] 
Let $x_1^{u_1}\cdots x_n^{u_n}$ be a monomial in $S$. Write 
$\bu = (u_1, \dots, u_n)^T$ and $x^{\bu} := x_1^{u_1}\cdots 
x_n^{u_n}$. Then $x^\bu$ is central if and only if $\bu \in K$.
\item[(2)] 
We have $Z=\kk[x_1^{\mff_1},\ldots,x_n^{\mff_n}]$ if 
and only if $\mff_i\be_i\in K$ for each $i=1,\ldots,n$. 
Equivalently, $\mff_i\be_i \otimes 1 \in K\otimes 
\ZZ_{(p)}$ for every prime $p\mid \ell$ and $i=1,\ldots,n$. 
\end{enumerate}
\end{lemma}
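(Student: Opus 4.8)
The plan is to prove the two parts of Lemma~\ref{zzlem4.1} in sequence, using the $\ZZ^n$-grading on $S$ throughout, and then to reduce part (2) to part (1) together with a standard localization argument.

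\medskip

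For part (1), recall that for a monomial $x^\bu = x_1^{u_1}\cdots x_n^{u_n}$ and a generator $x_k$, one computes directly from the defining relations $x_j x_i = p_{ij} x_i x_j$ that $x_k x^\bu = \bigl(\prod_{s} p_{sk}^{u_s}\bigr) x^\bu x_k = \xi^{\sum_s b_{sk} u_s} x^\bu x_k$. Hence $x^\bu$ commutes with $x_k$ if and only if $\sum_s b_{sk} u_s \equiv 0 \pmod \ell$, i.e.\ the $k$th coordinate of $B\bu$ vanishes in $\overline{\ZZ}$. Since $S$ is generated by the $x_k$, the monomial $x^\bu$ is central precisely when $\overline{B}\,\overline{\bu} = 0$, which is exactly the condition $\bu \in K$. (One should also note that, because $Z$ is $\ZZ^n$-graded — being the fixed ring of the group $O$ of $\ZZ^n$-graded automorphisms — an element of $S$ is central if and only if each of its monomial components is central, so the monomial description of $Z$ is complete.)

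\medskip

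For the first equivalence in part (2): if $\mff_i \be_i \in K$ for every $i$, then by part (1) each $x_i^{\mff_i}$ is central, so $\kk[x_1^{\mff_1},\dots,x_n^{\mff_n}] \subseteq Z$. For the reverse inclusion, $Z$ is spanned by central monomials $x^\bu$; by part (1) such $\bu$ lies in $K = \ker(\overline{B})^{-1}$, and by the definition \eqref{E0.3.1} of $\mff_i$ (equivalently Proposition~\ref{zzpro2.6}(3,4), which identifies $S^H = \kk[x_1^{\mff_1},\dots,x_n^{\mff_n}]$ and shows $\mff_i \mid u_i$ for every central monomial) each exponent $u_i$ is divisible by $\mff_i$, so $x^\bu \in \kk[x_1^{\mff_1},\dots,x_n^{\mff_n}]$. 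Conversely, if $Z = \kk[x_1^{\mff_1},\dots,x_n^{\mff_n}]$ then in particular $x_i^{\mff_i} \in Z$, so by part (1) $\mff_i \be_i \in K$.

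\medskip

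For the local reformulation, I would use that $K$ is the preimage in $\ZZ^n$ of a subgroup of $\overline{\ZZ}^n$, hence contains $\ell \ZZ^n$, so $\ZZ^n / K$ is a finite abelian group. The statement $\mff_i \be_i \in K$ for all $i$ is equivalent to: the image of each $\mff_i \be_i$ in the finite group $\ZZ^n/K$ is zero. A torsion element of a finitely generated abelian group is zero if and only if its image in every localization $\otimes\, \ZZ_{(p)}$ is zero, and since $\ZZ^n/K$ is $\ell$-torsion it suffices to check primes $p \mid \ell$; because localization is exact, $(\ZZ^n/K) \otimes \ZZ_{(p)} = (\ZZ^n \otimes \ZZ_{(p)})/(K \otimes \ZZ_{(p)})$, giving the stated criterion $\mff_i \be_i \otimes 1 \in K \otimes \ZZ_{(p)}$. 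The main obstacle, such as it is, is bookkeeping: being careful that "central" genuinely reduces to a coordinatewise condition on monomial exponents (which rests on the $\ZZ^n$-grading of $Z$), and invoking Proposition~\ref{zzpro2.6} correctly for the divisibility $\mff_i \mid u_i$ rather than re-deriving it; the algebra itself is routine.
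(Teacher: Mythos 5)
Your proposal is correct and follows essentially the same route as the paper: the identical commutation computation $x_k x^{\bu}=\xi^{\pm(B\bu)_k}x^{\bu}x_k$ for part (1), and for part (2) the same argument that $\mff_i\be_i\in K$ gives $x_i^{\mff_i}\in Z$ while the gcd definition \eqref{E0.3.1} forces $\mff_i\mid u_i$ for every central monomial, with the converse being immediate. The only difference is that you spell out the standard localization argument (using $\ell\ZZ^n\subseteq K$ and exactness of $\otimes\,\ZZ_{(p)}$) for the $\ZZ_{(p)}$ reformulation, which the paper simply records without proof in Remark~\ref{xxrem4.2}(3); that is a harmless elaboration rather than a different approach.
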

\begin{proof}
(1) For any $1 \leq i \leq n$ we have
\begin{align*}
x_i\left(x_1^{u_1}\cdots x_n^{u_n}\right)
&= \xi^{b_{i1}u_1+\cdots+b_{in}u_n}
   \left(x_1^{u_1}\cdots x_n^{u_n}\right) x_i\\
&= \xi^{(B \bu)_i} \left(x_1^{u_1}\cdots x_n^{u_n}\right) x_i.
\end{align*}
Hence, $x^\bu$ is central if and only if $\xi^{(B\bu)_i}=1$ 
for all $i$ if and only if $\bu \in K$. 

(2) By part (1) and definition, $\mff_i=\gcd(K_i)$. If 
$\mff_i\be_i\in K$, then by part (1), we have 
$x_i^{\mff_i}\in Z$. Suppose that $x^\bu \in Z$, or 
equivalently, $\bu\in K$. Then by the definition of 
$\mff_i$ (see \eqref{E0.3.1}), there exist integers $a_i$ 
such that $\mff_ia_i=u_i$, so that 
$(x_1^{\mff_1})^{a_1}\cdots (x_n^{\mff_n})^{a_n}=x^\bu$. 
Hence $Z\subseteq \kk[x_1^{\mff_1},\ldots,x_n^{\mff_n}]$. 
The converse is clear.
\end{proof}

\begin{proof}[Proof of Theorem~\ref{zzthm0.6}]
(1) $\Leftrightarrow$ (5): 
This is \cite[Theorem 5.5]{KKZ3}. 

(1) $\Leftrightarrow$ (4): 
This follows from \cite[Lemma 1.10]{KKZ5}.

(5) $\Rightarrow$ (3): 
By the proof of Proposition~\ref{zzpro2.6}, when $O=H$,
$Z=S^O=S^H$ is the subring of the form 
$\kk[x_1^{\mff_1},\cdots,x_n^{\mff_n}]$.

(3) $\Rightarrow$ (2) $\Rightarrow$ (1): These implications are clear.

(3) $\Leftrightarrow$ (7): Since $O$ is generated by 
$\{\phi_i\}_{i=1}^n$, it is preserved under base field extension.
Further both assertions in (7) and (3) are preserved 
under base field extension. So we can assume that $\kk$ is 
algebraically closed, whence we can use Lemma~\ref{zzlem0.2}.
Given (3), it is easy to see that $\rk(S_Z)=\prod_{i=1}^n \mff_i$, and this is the order of $O$ by Lemma~\ref{zzlem0.2}. Conversely, 
$Z\subseteq \kk[x_1^{\mff_1},\cdots,x_n^{\mff_n}]:=B
\subseteq S$ by definition.
Then $\rk(S_B)\rk(B_Z)=\rk(S_Z)=|O|$
where the last equation is Lemma~\ref{zzlem0.2}.
Further, $O$ acts on $B$ with $Z=B^{O}$. Hence 
$Z$ is a direct summand of $B$ (see \cite[Lemma 1.11]{KKZ5} and \cite[Corollary 1.12]{Mo}). 
Since $\rk(S_B)=\prod_{i=1}^n \mff_i$, then 
$|O|=\prod_{i=1}^n \mff_i$ implies that $\rk(B_Z)=1$
or equivalently, 
$Z=B=\kk[x_1^{\mff_1},\cdots,x_n^{\mff_n}]$.

(3) $\Rightarrow$ (6): Since $O$ is generated by 
$\{\phi_1,\cdots,\phi_n\}$, there is a surjective map 
$\prod_{s=1}^{n} \langle \phi_s\rangle \to O$ and 
consequently,
\[ |O|\leq  \frac{\ell^n}{\prod_{s=1}^{n} 
\gcd\{b_{1s}, b_{2s},\cdots,b_{ns},\ell\}}.\]

Similar to the argument in
(3) $\Leftrightarrow$ (7), we can assume that $\kk$ is 
algebraically closed. Again by Lemma~\ref{zzlem0.2},
$|O|=\rk(S_Z)=\prod_{i=1}^{n} \mff_i$.
Since $x_i^{\mff_i}\in Z$, $p_{is}^{\mff_i}=1$
after applying $\phi_s$ to $x_i^{\mff_i}$. Equivalently,
$b_{is} \mff_i$ is divisible by $\ell$. Then
\[\gcd\{b_{i1},\cdots,b_{in},\ell\} \mff_i=
\gcd\{b_{i1}\mff_i,\cdots,b_{in}\mff_i,\ell\mff_i\} 
=\ell a\]
for some integer $a$. Therefore
$\mff_i\geq\frac{\ell}{\gcd\{b_{i1},\cdots,b_{in},\ell\}}$.
Thus 
\[ |O|\geq \frac{\ell^n} {\prod_{i=1}^n 
\gcd\{b_{i1},\cdots,b_{in},\ell\}}.\]
This proves equality. In fact we also obtain that
\begin{align}\label{E4.1.1}
\mff_i=\frac{\ell}{\gcd\{b_{i1},\cdots,b_{in},\ell\}}.
\end{align}

(6) $\Leftrightarrow$ (8): Since the order of $\phi_i$ is 
$\frac{\ell}{\gcd\{b_{i1},\cdots,b_{in},\ell\}}$, (6) 
implies that the surjective map $O\to \prod_{i=1}^n \langle 
\phi_i\rangle$ is bijective. The converse is similar.

(6) $\Rightarrow$ (2): 
Let $w_i=\frac{\ell}{\gcd\{b_{i1},\cdots,b_{in},\ell\}}$.
Then $w_i b_{is}$ is divisible by $\ell$. This implies that
$x_i^{w_i}\in Z$ for all $i$. Let $C=\kk[x_1^{w_1},
\cdots, x_n^{w_n}]$. So $\rk(S_C)=\frac{\ell^n}{
\prod_{i=1}^n \gcd\{b_{i1},\cdots,b_{in},\ell\}}$,
which is equal to $|O|$. Since $\rk(S_Z)=|O|$, 
we obtain that $\rk(Z_C)=1$.
Since $Z$ is Cohen--Macaulay \cite[Lemma 3.1]{JoZ}, then $C=Z$.

(9) $\Leftrightarrow$ (3): This is Lemma~\ref{zzlem4.1}(2).

When $Z$ is regular, it is Gorenstein. By Theorem~\ref{zzthm2.7},
$\oj_S$ is equal to $\fj_{S,O}$. By a decomposition like
\eqref{E2.7.1}, one sees that $\oa_S$ is equal to 
$\fa_{S,O}$. 
\end{proof}

\begin{remark}\label{xxrem4.2}
We now describe an idea (or rather an algorithm) for testing 
regularity and Gorensteinness of $Z$ that works for any 
$n$. We divide it into several steps.
\begin{enumerate}
\item[(1)]
Recall that if $M$ is any matrix over a PID, 
then there exists a diagonal matrix $D$ and invertible 
matrices $L,R$ such that $D=LMR$. The matrix $D$ is the 
\emph{Smith normal form} of $M$. We will apply this to
the matrix $B$.
\item[(2)]
By definition, $K$ is the preimage of $\overline{K}$ 
where $\overline{K}$ is the kernel of the map 
$L_{\overline{B}}: \overline{\ZZ}^n\to \overline{\ZZ}^n$
(as a left multiplication by the matrix $\overline{B}$). 
We also use $\overline{B}$ to denote this (right) 
$\ZZ$-module endomorphism $L_{\overline{B}}$ if no confusion 
occurs. By definition there is a short exact sequence
$$0\to \ell (\ZZ^n)\to K\to \overline{K}\to 0.$$
As a consequence, $K$ contains $\ell \be_i$ for each
$i$. We may consider $\{\ell\be_i\}_{i=1}^n$ as a subset of 
a generating set of $K$. If necessary we also view 
$\overline{B}$ as the composition map
$\ZZ^n\to \overline{\ZZ}^n\xrightarrow{L_{\overline{B}}} 
\overline{\ZZ}^n.$ In this setting, $K$ is the 
kernel of $\overline{B}$. 
\item[(3)]
Since $\ZZ$ is integrally closed and $K$ is finitely generated, 
the conditions in Lemma~\ref{zzlem4.1}(2)
can be checked locally (at prime 
$p$ for all $p\mid \ell$), and to do this it is convenient to have a 
generating set for $K_{(p)}:=K\otimes \ZZ_{(p)}$. To this end, we 
compute the kernel of the map 
$\overline{B}_{(p)}:=\overline{B}\otimes \ZZ_{(p)}$ for 
each prime $p \mid \ell$, in other words, we produce a generating 
set for $K_{(p)}$, which can be glued together to get a 
generating set for $K$. 
\item[(4)]
Recall that for an integer $m$ and a prime $p$, $\nu_p(m)$ 
denotes the maximal integer $a$ such that $p^a \mid m$. 
Recall from Hypothesis~\ref{zzhyp0.3} that 
$B=(b_{ij})_{i,j}$ is an $n \times n$ skew symmetric 
matrix over $\ZZ$. We further introduce some notations: 
fix a prime $p$ dividing $\ell$ and let 
\begin{equation}
\label{E4.2.1}
N=\nu_p(\ell), \quad
\alpha_{ij}=\min\{N,\nu_p(b_{ij})\},
\quad  {\text{and}} \quad 
\alpha = \min\{N,\nu_p(\pf(B))\}.
\end{equation}
\item[(5)]
For each $p\mid \ell$, $\overline{\ZZ}\otimes \ZZ_{(p)}\cong 
\ZZ/(p^{N})$ and $K_{(p)}$ is the kernel of the left multiplication 
map
$$\overline{B}_{(p)}: \ZZ_{(p)}^n \to (\ZZ/(p^{N}))^n.$$
\item[(6)]
By part (2), for each $p\mid \ell$, $p^N \be_i\in K_{(p)}$.
So we are interested in other generators in $K_{(p)}$. 
In other words, we are interested in generators of 
$\overline{K}_{(p)}$. 
\item[(7)]
Given the Smith normal form $D = LBR$ for $B$, the 
equation $\overline{B}_{(p)} \bu=0$ is equivalent to
$B \bu\equiv 0 \pmod{p^N}$ and is equivalent to 
$D\bv \equiv 0 \pmod{p^N}$ where $\bv = R^{-1}\bu$. Hence, 
to compute the kernel of $\overline{B}_{(p)}$, we compute 
the kernel of $D$ mod $p^N$ (or equivalently, the kernel of 
the map $\overline{D}_{(p)}$) and apply $R$ to the generators 
to obtain a set of generators $\{\bu_1, \dots, \bu_r\}$ 
for the kernel of $\overline{B}_{(p)}$. If we let 
$\mff_{p,j} = \gcd_{1 \leq i \leq r}((\bu_i)_j)$ (computed 
in $\ZZ_{(p)}$),
then $\mff_{p,j}$ is of the form 
$p^a u$ where $a$ is a nonnegative integer and $u$ is a unit
in $\ZZ_{(p)}$. So we can write $\mff_{p,j}=p^a$, which is
called the {\it standard form} of $\mff_{p,j}$. Using the standard forms,
one sees that $\mff_{j}$ is the lcm of the $\mff_{p,j}$ as $p$ 
runs over the prime divisors of $\ell$.
Since $\mff_{p,j}=\mff_{j}$
in $\ZZ_{(p)}$, we will also use $\mff_{j}$ for $\mff_{p,j}$ 
in the middle of the proofs.
\item[(8)]
By Lemma~\ref{zzlem4.1} (resp. Lemma~\ref{zzlem5.1} in the next section), 
to determine if the center $Z$ is regular (resp. Gorenstein), 
it is enough check whether $\mff_i \be_i \in K$ (resp. 
$(\mff_1, \dots, \mff_n)^T \in K$). 
\item[(9)]
For the argument below, we will fix a prime divisor $p$ 
of $\ell$. Throughout the rest of this section 
we will use the convention introduced in this remark.
\end{enumerate}
\end{remark}

Using the ``algorithm'' discussed above, we give explicit 
conditions equivalent to the regularity of $Z$ in terms of 
the parameters $b_{ij}$ in the cases $n=3$ and $n=4$.

\begin{proposition}\label{zzpro4.3}
Assume Hypothesis~\ref{zzhyp0.3} and retain the convention
introduced in \eqref{E4.2.1}. Assume, without loss of 
generality, that $\alpha_{12} = \min\{\alpha_{ij}\}=0$.
\begin{enumerate}
\item[(1)] 
If $n=3$, then $K_{(p)}$ is generated as a 
$\ZZ_{(p)}$-module by $p^N\be_i$ for $i=1,2$ and 
\[ \frac{1}{b_{12}}
\begin{bmatrix}b_{23}\\-b_{13} \\ b_{12}\end{bmatrix}.\]
\item[(2)] 
If $n=4$, then $K_{(p)}$ is generated as a 
$\ZZ_{(p)}$-module by $p^N\be_i$ for $i=1,2$ and 
\[
p^{N-\alpha}
\begin{bmatrix}
b_{23}/b_{12}\\-b_{13}/b_{12} \\ 1 \\ 0 
\end{bmatrix}, \quad
p^{N-\alpha}
\begin{bmatrix}
b_{24}/b_{12}\\-b_{14}/b_{12} \\ 0 \\ 1
\end{bmatrix}.\]
\end{enumerate}
\end{proposition}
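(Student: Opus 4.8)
The plan is to localize at the fixed prime $p \mid \ell$ and work entirely in the discrete valuation ring $\ZZ_{(p)}$. By Remark~\ref{xxrem4.2}(5)--(6), $K_{(p)}$ is exactly the kernel of the left multiplication map $\overline{B}_{(p)} \colon \ZZ_{(p)}^n \to (\ZZ/p^N)^n$, i.e.\ the set of $\bu \in \ZZ_{(p)}^n$ with $B\bu \equiv 0 \pmod{p^N}$; and the normalization $\alpha_{12} = \min\{\alpha_{ij}\} = 0$ from \eqref{E4.2.1} says precisely that $b_{12}$ is a unit in $\ZZ_{(p)}$, which makes the displayed vectors well defined. In both parts the argument has the same two-step shape: (a) check that each proposed generator lies in $K_{(p)}$; (b) show that every element of $K_{(p)}$ is a $\ZZ_{(p)}$-combination of them.

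For step (a) I would apply $B$ to each candidate. The vectors $p^N\be_1, p^N\be_2$ lie in $K_{(p)}$ trivially. A direct computation shows that $B$ applied to $(b_{23}/b_{12}, -b_{13}/b_{12}, 1)^T$ is identically zero when $n=3$, while when $n=4$ one gets $B\cdot(b_{23}/b_{12}, -b_{13}/b_{12}, 1, 0)^T = (0,0,0,-\pf(B)/b_{12})^T$ and $B\cdot(b_{24}/b_{12}, -b_{14}/b_{12}, 0, 1)^T = (0,0,\pf(B)/b_{12},0)^T$ — the Pfaffian is exactly the residual entry left after the first three cancel. Multiplying by $p^{N-\alpha}$ and comparing $p$-adic valuations — using $\nu_p(\pf(B)) \geq \alpha$ always, with equality when $\alpha < N$ — gives $(N-\alpha) + \nu_p(\pf(B)) \geq N$, hence $B\bw_3, B\bw_4 \equiv 0 \pmod{p^N}$.

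For step (b), take $\bu \in K_{(p)}$. Rows $1$ and $2$ of $B\bu \equiv 0 \pmod{p^N}$, together with the invertibility of $b_{12}$, solve $u_1$ and $u_2$ modulo $p^N$ in terms of $u_3$ (and, when $n=4$, $u_4$). Substituting these into row $3$ causes the $u_3$-terms to cancel and collapses the congruence to $\pf(B)\,u_4 \equiv 0 \pmod{p^N}$; row $4$ gives $\pf(B)\,u_3 \equiv 0 \pmod{p^N}$ in the same way (for $n=3$, where $\pf(B)=0$, rows $3$ and $4$ become automatic). Comparing valuations forces $p^{N-\alpha} \mid u_3$ and $p^{N-\alpha}\mid u_4$, so I can subtract off the appropriate $\ZZ_{(p)}$-multiples of $\bw_3, \bw_4$ (resp.\ the single generator when $n=3$) to reduce to the case $u_3 = u_4 = 0$. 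Then rows $1$ and $2$ directly give $u_1, u_2 \in p^N\ZZ_{(p)}$, so the reduced vector lies in the span of $p^N\be_1, p^N\be_2$, and $\bu$ is back in the claimed span.

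The genuinely routine parts are the two matrix--vector multiplications and the cancellation that extracts $\pf(B)$. The step I would be most careful about is the valuation bookkeeping built around $\alpha = \min\{N, \nu_p(\pf(B))\}$: one must verify that the multiplier $p^{N-\alpha}$ is simultaneously \emph{large enough} for membership in $K_{(p)}$ (step (a)) and \emph{forced} by the Pfaffian congruences in step (b), and that the degenerate case $\alpha = N$ — where $p^{N-\alpha}=1$ and the Pfaffian imposes no constraint, which is in particular the situation throughout $n=3$ — is subsumed uniformly rather than treated as an exception.
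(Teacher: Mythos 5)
Your proposal is correct, and it reaches the stated generators by a (mildly but genuinely) different route than the paper. The paper follows the algorithm set up in Remark~\ref{xxrem4.2}(7): it writes down an explicit Smith normal form $D=LBR$ of $B$ over $\ZZ_{(p)}$ (possible because $\alpha_{12}=0$ makes $b_{12}$ a unit), observes that the kernel of $\overline{D}_{(p)}$ is generated by $p^N\be_1$, $p^N\be_2$ and $\be_3$ (for $n=3$), respectively $p^{N-\alpha}\be_3$, $p^{N-\alpha}\be_4$ (for $n=4$, since $\pf(B)/b_{12}$ sits on the diagonal of $D$ with valuation $\ge \alpha$... in fact exactly $\min\{N,\nu_p(\pf B)\}$ matters), and then transports these generators through $R$. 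You instead never write down $L$ or $R$: you verify membership directly, with $\pf(B)/b_{12}$ appearing as the single surviving entry of $B\bw_3$ and $B\bw_4$, and you prove spanning by elimination — solving rows $1$--$2$ for $u_1,u_2$ via the unit $b_{12}$ and substituting into rows $3$--$4$ to force $\pf(B)u_3\equiv\pf(B)u_4\equiv 0\pmod{p^N}$, hence $p^{N-\alpha}\mid u_3,u_4$, after which subtracting multiples of the generators reduces to $u_1,u_2\in p^N\ZZ_{(p)}$. Your computations check out (I verified the matrix products and the valuation inequalities, including the degenerate case $\alpha=N$, which you correctly note imposes no constraint and covers all of $n=3$). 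The two arguments are row reduction at heart; the paper's SNF formulation buys uniformity with the rest of Section~4 (the same $D$, with $\pf(B)/b_{12}$ on its diagonal, foreshadows Corollary~\ref{zzcor4.8} and Theorem~\ref{zzthm0.12}(3)), while yours is more self-contained and makes the roles of $\pf(B)$ and $\alpha$ explicit without auxiliary matrices. The only cosmetic slip is the reference to ``rows 3 and 4'' in the $n=3$ case (there is no row 4); the intended point — that the last row collapses to $0\equiv 0$ — is correct.
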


\begin{proof}
(1) Let $n=3$. The Smith normal form $D=L B R$ of $B$ over 
the ring $\ZZ_{(p)}$ is
\[
D = \begin{bmatrix}
    b_{12}& 0 & 0 \\
    0 & -b_{12} & 0\\
    0 & 0 & 0
\end{bmatrix}, \quad
L = \begin{bmatrix}
    1 & 0 & 0 \\
    0 & 1 & 0 \\
    b_{23}/b_{12} & -b_{13}/b_{12} & 1
\end{bmatrix}, \quad
R = \begin{bmatrix}
    0 & 1 & b_{23}/b_{12} \\
    1 & 0 & -b_{13}/b_{12} \\
    0 & 0 & 1
\end{bmatrix}.
\]
Applying the argument in Remark \ref{xxrem4.2}(7),
the kernel of $D_{(p)}$ is generated, as a $\ZZ_{(p)}$-module, 
by $p^{N}\be_1$, $p^{N}\be_2$, $\be_3$. Applying $R$ to 
these gives the stated generators.

(2) Let $n=4$. The Smith normal form of $B$ is given by
\begin{align*}
    D &= \begin{bmatrix}
         b_{12}& 0 & 0 & 0 \\
         0 & -b_{12} & 0 & 0\\
         0 & 0 & \frac{1}{b_{12}}\pf(B) & 0 \\
         0 & 0 & 0 & -\frac{1}{b_{12}}\pf(B) 
    \end{bmatrix},\\
    L &= \frac{1}{b_{12}}\begin{bmatrix}
         b_{12} & 0 & 0 & 0 \\
         0 & b_{12} & 0 & 0 \\
         b_{23} & -b_{13} & b_{12} & 0 \\
         b_{24} & -b_{14} & 0 & b_{12} 
    \end{bmatrix},\\
    R &= \frac{1}{b_{12}}\begin{bmatrix}
         0 & b_{12} & b_{24} & b_{23} \\
         b_{12} & 0 & -b_{14} & -b_{13}  \\
         0 & 0 & 0 &  b_{12} \\
         0 & 0 & b_{12} & 0  
    \end{bmatrix}.
\end{align*}
Using similar reasoning as above, the kernel of $D_{(p)}$ 
is generated by $p^{N}\be_1$, $p^{N}\be_2$, 
$p^{N-\alpha}\be_3$, $p^{N-\alpha}\be_4$. Again, 
applying $R$ to these gives the stated generators. 
\end{proof}

We will write $\mff_i \be_i$ for $\mff_i\be_i 
\otimes 1 \in K_{(p)}:=K\otimes \ZZ_{(p)}$ if no 
confusion occurs.

\begin{proposition}
\label{zzpro4.4}
Keep the assumptions of Proposition~\ref{zzpro4.3}. 
If $n=3$, then $\mff_i\be_i\in K_{(p)}$ for all 
$i=1,2,3$ if and only if $\alpha_{23}=\alpha_{13}=N$.
\end{proposition}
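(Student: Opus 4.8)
The plan is to work locally at the fixed prime $p \mid \ell$ and reduce everything to a short calculation with $p$-adic valuations, using the explicit generators of $K_{(p)}$ supplied by Proposition~\ref{zzpro4.3}(1). First I would record the standard forms of the $\mff_i$: by Lemma~\ref{lem.fhyper} we have $\mff_1 = \gcd(b_{23},\ell)$, $\mff_2 = \gcd(b_{13},\ell)$, and $\mff_3 = \gcd(b_{12},\ell)$, so applying $\nu_p$ together with the convention \eqref{E4.2.1} gives $\mff_1 = p^{\alpha_{23}}$, $\mff_2 = p^{\alpha_{13}}$, and $\mff_3 = p^{\alpha_{12}} = 1$ as elements of $\ZZ_{(p)}$ (the last equality since $\alpha_{12} = 0$ by hypothesis; note that the same hypothesis makes $b_{12}$ a unit in $\ZZ_{(p)}$).

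Next I would set $\bv := \tfrac{1}{b_{12}}(b_{23},-b_{13},b_{12})^T = \tfrac{b_{23}}{b_{12}}\be_1 - \tfrac{b_{13}}{b_{12}}\be_2 + \be_3$, which by Proposition~\ref{zzpro4.3}(1) generates $K_{(p)}$ together with $p^N\be_1$ and $p^N\be_2$. A general element of $K_{(p)}$ is therefore $a\,p^N\be_1 + b\,p^N\be_2 + c\,\bv$ with $a,b,c \in \ZZ_{(p)}$, whose coordinate vector is $(a p^N + c\,b_{23}/b_{12},\; b p^N - c\,b_{13}/b_{12},\; c)$. I would then run the three membership tests one at a time. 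Testing $\mff_3\be_3 = \be_3 \in K_{(p)}$ forces $c = 1$ from the third coordinate, and then vanishing of the first two coordinates forces $p^N \mid b_{23}/b_{12}$ and $p^N \mid b_{13}/b_{12}$ in $\ZZ_{(p)}$, i.e.\ $\nu_p(b_{23}) \geq N$ and $\nu_p(b_{13}) \geq N$, i.e.\ $\alpha_{23} = \alpha_{13} = N$; conversely, these valuation bounds exhibit $\be_3 = \bv - \tfrac{b_{23}}{b_{12}p^N}(p^N\be_1) + \tfrac{b_{13}}{b_{12}p^N}(p^N\be_2)$ as an explicit $\ZZ_{(p)}$-combination of the generators. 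Testing $\mff_1\be_1 = p^{\alpha_{23}}\be_1 \in K_{(p)}$ forces $c = 0$ then $b = 0$, leaving $p^{\alpha_{23}} = a p^N$, which is solvable in $\ZZ_{(p)}$ precisely when $\alpha_{23} \geq N$, i.e.\ $\alpha_{23} = N$; symmetrically, $\mff_2\be_2 \in K_{(p)}$ iff $\alpha_{13} = N$.

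Assembling these three equivalences gives the claim: all of $\mff_1\be_1$, $\mff_2\be_2$, $\mff_3\be_3$ lie in $K_{(p)}$ exactly when $\alpha_{23} = \alpha_{13} = N$. I do not expect a genuine obstacle here, since the content is already packaged in Proposition~\ref{zzpro4.3}(1) and Lemma~\ref{lem.fhyper}; the only points requiring care are the bookkeeping of unit factors when passing between $\gcd$'s over $\ZZ$ and standard forms over $\ZZ_{(p)}$, and the observation that membership of $\be_3$ is the binding constraint — it already forces both valuation conditions, which in turn return membership of $\mff_1\be_1$ and $\mff_2\be_2$ for free.
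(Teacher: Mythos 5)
Your proof is correct and follows essentially the same route as the paper's: both test membership of the $\mff_i\be_i$ against the explicit generators of $K_{(p)}$ from Proposition~\ref{zzpro4.3}(1), with the $i=3$ test forcing $\alpha_{23}=\alpha_{13}=N$ and the converse being immediate. The only cosmetic difference is that you read off the local standard forms of the $\mff_i$ from Lemma~\ref{lem.fhyper} rather than recomputing them from the generators as in the paper's displayed formula, which changes nothing.
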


\begin{proof}
Using the generators of $K_{(p)}$ from 
Proposition~\ref{zzpro4.3} and the definition of 
$\mff_i$ (c.f. \eqref{E0.3.1}), we have, up to units
in $\ZZ_{(p)}$, 
\begin{align}\label{E4.4.1}
    \mff_1 &= b_{23}/b_{12}, \qquad
    \mff_2 = b_{13}/b_{12}, \qquad
    \mff_3 = 1.
\end{align}
(Strictly speaking these should be $\mff_{p,1}, \mff_{p,2}$
and $\mff_{p,3}$ respectively.)
Suppose $\mff_i\be_i \in K_{(p)}$ 
for all $i$. In particular, for $i=3$, there exists 
$\lambda_1, \lambda_2, \lambda_3 \in \ZZ_{(p)}$ such that 
\begin{align*}
    \begin{bmatrix}
        \lambda_1 p^{N} \\ 0 \\ 0
    \end{bmatrix} + 
    \begin{bmatrix}
        0 \\ \lambda_2 p^{N} \\ 0
    \end{bmatrix} + 
    \begin{bmatrix}
        \lambda_3 b_{23}/b_{12} \\ 
				-\lambda_3 b_{13}/b_{12} \\ 
				\lambda_3
    \end{bmatrix} &= 
    \begin{bmatrix}
        0 \\ 0 \\ 1
    \end{bmatrix}.
\end{align*}
Hence $\lambda_3=1$. The first component gives
$\lambda_1 = -b_{23}p^{-N}/b_{12}$, so 
$\nu_p(\lambda_1) = \alpha_{23}-N$ and a necessary and sufficient 
condition for $\lambda_1\in\ZZ_{(p)}$ is $\alpha_{23}=N$. 
Similarly we get $\alpha_{13}=N$, and this proves the forward 
implication. The converse is clear.
\end{proof}

\begin{proposition}\label{zzpro4.5}
Keep the assumptions of Proposition~\ref{zzpro4.3}. 
If $n=4$, then $\mff_i\be_i\in K_{(p)}$ 
for all $i=1,\ldots,4$ if and only if 
\begin{align}
\label{E4.5.1}
\alpha_{ij} &\ge \alpha
\end{align}
for $(i,j) = (1,3), (1,4), (2,3), (2,4)$. 
\end{proposition}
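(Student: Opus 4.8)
The plan is to argue $p$-locally, using the explicit $\ZZ_{(p)}$-module generators of $K_{(p)}$ produced in Proposition~\ref{zzpro4.3}(2): the vectors $p^N\be_1$, $p^N\be_2$, together with
\[ \bw_3 = p^{N-\alpha}(b_{23}/b_{12},\, -b_{13}/b_{12},\, 1,\, 0)^T \quad\text{and}\quad \bw_4 = p^{N-\alpha}(b_{24}/b_{12},\, -b_{14}/b_{12},\, 0,\, 1)^T. \]
Here $\alpha_{12}=0$ guarantees that $b_{12}$ is a unit in $\ZZ_{(p)}$, so each quotient $b_{jk}/b_{12}$ lies in $\ZZ_{(p)}$ and $\nu_p(b_{jk}/b_{12})=\nu_p(b_{jk})$. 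The first step is to read the standard forms of the $\mff_{p,i}$ off these generators, in the sense of Remark~\ref{xxrem4.2}(7): the third and fourth coordinates give $\mff_{p,3}=\mff_{p,4}=p^{N-\alpha}$ immediately, while $\nu_p(\mff_{p,1})=\min\{N,\,(N-\alpha)+\nu_p(b_{23}),\,(N-\alpha)+\nu_p(b_{24})\}$ and, symmetrically, $\nu_p(\mff_{p,2})=\min\{N,\,(N-\alpha)+\nu_p(b_{13}),\,(N-\alpha)+\nu_p(b_{14})\}$.

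Next, for each $i\in\{1,2,3,4\}$ I would decide whether $\mff_i\be_i\in K_{(p)}$ by positing an identity $\mff_i\be_i=\lambda_1 p^N\be_1+\lambda_2 p^N\be_2+\lambda_3\bw_3+\lambda_4\bw_4$ with $\lambda_j\in\ZZ_{(p)}$ and solving the resulting $4\times 4$ linear system one coordinate at a time. The structural feature that makes this routine is that coordinates $3$ and $4$ involve only $\lambda_3$ and $\lambda_4$, each attached to a single $p^{N-\alpha}$, so they pin those two coefficients down at once; coordinate $2$ then determines $\lambda_2$ (or forces it to vanish); and coordinate $1$ determines $\lambda_1$. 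For $i=3$ this yields $\lambda_4=0$, $\lambda_3=1$, $\lambda_1=-p^{-\alpha}b_{23}/b_{12}$ and $\lambda_2=p^{-\alpha}b_{13}/b_{12}$, so membership holds precisely when $\nu_p(b_{23})\ge\alpha$ and $\nu_p(b_{13})\ge\alpha$; the case $i=4$ is symmetric and needs $\nu_p(b_{24}),\nu_p(b_{14})\ge\alpha$. For $i=1$, coordinates $3$, $4$, $2$ force $\lambda_2=\lambda_3=\lambda_4=0$, and coordinate $1$ becomes $\mff_{p,1}=\lambda_1 p^N$; since $p^N$ already appears among the inputs to the gcd defining $\mff_{p,1}$, one has $\nu_p(\mff_{p,1})\le N$, so this is solvable exactly when $\nu_p(\mff_{p,1})=N$, i.e.\ (by the formula above) when $\nu_p(b_{23})\ge\alpha$ and $\nu_p(b_{24})\ge\alpha$; the case $i=2$ symmetrically needs $\nu_p(b_{13}),\nu_p(b_{14})\ge\alpha$.

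Finally I would translate the valuation inequalities back into the $\alpha_{jk}$: because $\alpha\le N$, one has $\nu_p(b_{jk})\ge\alpha$ if and only if $\alpha_{jk}=\min\{N,\nu_p(b_{jk})\}\ge\alpha$. Intersecting the conditions coming from $i=1,2,3,4$ leaves exactly $\alpha_{13},\alpha_{14},\alpha_{23},\alpha_{24}\ge\alpha$, which is \eqref{E4.5.1}. I do not anticipate a genuine obstacle; the only things demanding care are staying $p$-local throughout (the displayed vectors generate $K_{(p)}$, not $K$), and not conflating the integer $\mff_i$ with the $p$-primary standard form $\mff_{p,i}$ used in the computation --- a substitution licensed by Remark~\ref{xxrem4.2}(7).
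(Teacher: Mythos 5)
Your proposal is correct and follows essentially the same route as the paper: it uses the generators of $K_{(p)}$ from Proposition~\ref{zzpro4.3}(2), reads off the same local values of the $\mff_{p,i}$ as in \eqref{E4.5.2}, and solves the same forced linear systems coordinate by coordinate, with the valuation conditions $\nu_p(b_{ij})\ge\alpha$ translating into \eqref{E4.5.1}. The only (harmless) difference is organizational: you characterize membership for each $i=1,\dots,4$ separately and intersect, whereas the paper extracts all four inequalities from the cases $i=3,4$ and then verifies $i=1,2$ directly in the converse.
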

\begin{proof}
Using the generators of $K \otimes \ZZ_{(p)}$ from 
Proposition~\ref{zzpro4.3} and the definition of $\mff_i$, 
we have
\begin{align}
\label{E4.5.2}
\mff_1 &=\gcd\left( p^{N},p^{N-\alpha} \frac{b_{23}}{b_{12}}, 
        p^{N-\alpha}\frac{b_{24}}{b_{12}} \right) 
				=\gcd\left(p^{N-\alpha} \frac{b_{23}}{b_{12}}, 
				p^{N-\alpha}\frac{b_{24}}{b_{12}} \right) \\
\mff_2 &=\gcd\left( p^{N},p^{N-\alpha} \frac{b_{13}}{b_{12}}, 
        p^{N-\alpha}\frac{b_{14}}{b_{12}} \right) 
				=\gcd\left(p^{N-\alpha} \frac{b_{13}}{b_{12}}, 
				p^{N-\alpha}\frac{b_{14}}{b_{12}} \right) 
				\nonumber\\
\mff_3 &= \mff_4 = p^{N-\alpha}.  \nonumber
\end{align}
Suppose $\mff_i\be_i \in K_{(p)}$ for 
all $i$. In particular, for $i=3$, there exists 
$\lambda_1,\ldots,\lambda_4 \in \ZZ_{(p)}$ such that 
\begin{align*}
\begin{bmatrix}
        \lambda_1 p^{N} \\ \lambda_2 p^{N} \\ 0 \\ 0
\end{bmatrix} + 
    p^{N-\alpha}
\begin{bmatrix}
        \lambda_3 b_{23}/b_{12} \\ -\lambda_3 b_{13}/b_{12} 
				\\ \lambda_3 \\ 0
\end{bmatrix} + 
    p^{N-\alpha}
\begin{bmatrix}
        \lambda_4 b_{24}/b_{12} \\ -\lambda_4 b_{14}/b_{12} 
				\\ 0 \\ \lambda_4
\end{bmatrix}
    &= 
\begin{bmatrix}
        0 \\ 0 \\ p^{N-\alpha} \\ 0
\end{bmatrix}.
\end{align*}
Hence $\lambda_4=0$ and $\lambda_3=1$. The first component gives 
$\lambda_1  =-p^{-\alpha}b_{23}/b_{12}$, so 
$\nu_p(\lambda_1)=-\alpha+\alpha_{23}$. So 
$\lambda_1\in\ZZ_{(p)}$ if and only if $\alpha_{23} \ge \alpha$. 
The same argument on the second component gives 
$\alpha_{13}\ge \alpha $. We can perform the same calculations 
for $i=4$ to obtain $\alpha_{14}\ge \alpha$ and 
$\alpha_{24}\ge \alpha$. 

Conversely, assume the inequalities \eqref{E4.5.1} hold. Then 
$N\le N-\alpha + \alpha_{ij}$ so $\mff_1 = \mff_2=p^{N}$. Hence 
$\mff_i\be_i\in K_{(p)}$ for $i=1,2$. 
Next we take the third generator of $K_{(p)}$ 
(c.f. Proposition~\ref{zzpro4.3}) and subtract from it multiples 
of the first two
\[
     \mff_3\be_3 = p^{N-\alpha}\begin{bmatrix}
     b_{23}/b_{12}\\-b_{13}/b_{12} \\ 1 \\ 0
\end{bmatrix}-\beta_1 p^{N}\be_1+\beta_2 p^{N}\be_2
\]
where $\beta_1=p^{-\alpha}b_{23}/b_{12}$ and 
$\beta_2= p^{-\alpha}b_{13}/b_{12}$. The inequalities 
\eqref{E4.5.1} can be rearranged so that 
$\alpha_{ij}- \alpha\ge 0$. The left hand side of this 
inequality is $\nu_p(\beta_k)$ for appropriate $i,j,k$. 
Hence $\beta_1,\beta_2\in\ZZ_{(p)}$, so 
$\mff_3\be_3\in K_{(p)}$. Similar 
computations with the fourth generator of $K$ yield 
$\mff_4\be_4\in K_{(p)}$. This 
completes the proof.  
\end{proof}

Next we globalize the above results which are local 
at $p$. First we need a technical lemma.

\begin{lemma}\label{zzlem4.6}
Let $r_1,\ldots, r_k$ be roots of unity with orders $o_1,\ldots, o_k$. Let $s=o_1\cdots o_k$ and $\zeta$ be a primitive $s$-th root of unity. Then the orders are pairwise coprime if and only if there exist integers $n_1,\ldots, n_k$ such that $\gcd(n_i, s)=1$ and 
\begin{align*}
    r_i &= \zeta^{n_i o_1\cdots \hat{o}_i \cdots o_k}
\end{align*}
\end{lemma}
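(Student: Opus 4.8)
The plan is to carry everything out additively inside the cyclic group $\langle\zeta\rangle$ of order $s=o_1\cdots o_k$. Write $r_i=\zeta^{c_i}$ for an integer $c_i$; since $\zeta$ has order $s$, the order of $r_i$ is $s/\gcd(c_i,s)$, so $\gcd(c_i,s)=s/o_i=o_1\cdots\widehat{o_i}\cdots o_k$ and hence $c_i=(s/o_i)m_i$ for some integer $m_i$ with $\gcd(m_i,o_i)=1$. Thus a representation $r_i=\zeta^{n_i(s/o_i)}$ of the stated shape \emph{always} exists with $\gcd(n_i,o_i)=1$; the whole content of the lemma is whether the exponents can be taken coprime to all of $s$, and this is what it ties to pairwise coprimality of the $o_i$.

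For the forward implication, assume the $o_i$ are pairwise coprime, so $\gcd(o_i,s/o_i)=1$ and every prime dividing $s$ divides $o_i$ or $s/o_i$ but not both. Fixing $i$, I apply the Chinese Remainder Theorem to the pairwise coprime moduli $o_i$ and the distinct primes $p\mid s/o_i$ to produce an integer $n_i$ with $n_i\equiv m_i\pmod{o_i}$ and $n_i\equiv1\pmod p$ for each such $p$; then $\gcd(n_i,s)=1$, while $o_i\mid n_i-m_i$ gives $s\mid(n_i-m_i)(s/o_i)$ and hence $\zeta^{n_i(s/o_i)}=\zeta^{m_i(s/o_i)}=r_i$. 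This yields the required $n_1,\dots,n_k$.

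For the converse, assume $r_i=\zeta^{n_i(s/o_i)}$ with $\gcd(n_i,s)=1$ for all $i$. Because $\gcd(n_i,s)=1$ one computes $\gcd\!\big(n_i(s/o_i),\,s\big)=s/o_i$, so each $r_i$ generates the unique subgroup $\langle\zeta^{s/o_i}\rangle$ of order $o_i$, and $\langle r_1,\dots,r_k\rangle=\langle\zeta^{d}\rangle$ where $d=\gcd(s/o_1,\dots,s/o_k)$, a group of order $s/d$. On the other hand, being generated by elements of orders $o_1,\dots,o_k$, this group has order $\lcm(o_1,\dots,o_k)$. I expect this converse to be the main obstacle. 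The naive conclusion $o_1\cdots o_k=d\cdot\lcm(o_1,\dots,o_k)$ is in fact an identity and carries no information, so the pairwise coprimality has to be squeezed out of the precise form of the generators: concretely, one needs $d=1$, i.e.\ that the $r_i$ generate all of $\langle\zeta\rangle$ (equivalently, $\langle r_1,\dots,r_k\rangle$ has order $s$), from which a prime-by-prime reading of $d=1$ forces each prime dividing $s$ to divide exactly one $o_i$. Getting this step exactly right — and making sure the hypothesis $\gcd(n_i,s)=1$ is used in its full strength rather than only the automatic $\gcd(n_i,o_i)=1$ — is the delicate point.
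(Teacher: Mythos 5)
Your forward implication is correct and is essentially the paper's own argument: both proofs start from the automatic expression $r_i=\zeta^{m_i s_i}$, where $s_i:=s/o_i$ and $\gcd(m_i,o_i)=1$, and then use pairwise coprimality (through $\gcd(o_i,s_i)=1$) to replace $m_i$ by a representative $n_i\equiv m_i\pmod{o_i}$ coprime to $s$; the paper does this by choosing $j$ with $m_i+jo_i$ coprime to $s_i$, you do it with the Chinese Remainder Theorem. That difference is cosmetic.

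The genuine gap is the converse, which you never actually prove: you reduce it to showing $d:=\gcd(s_1,\dots,s_k)=1$ and then stop, calling this ``the delicate point.'' That step cannot be carried out, because with the $o_i$ taken to be the orders of the $r_i$ (the reading you adopt), the existence of a representation $r_i=\zeta^{n_is_i}$ with $\gcd(n_i,s)=1$ imposes no condition at all: for arbitrary roots of unity one has $r_i=\zeta^{m_is_i}$ with $\gcd(m_i,o_i)=1$ (the unique subgroup of order $o_i$ of $\langle\zeta\rangle$ contains $r_i$), and one may then choose, again by CRT, $n_i\equiv m_i\pmod{o_i}$ and $n_i\equiv 1\pmod{p}$ for every prime $p\mid s$ with $p\nmid o_i$; this $n_i$ is coprime to $s$ and $\zeta^{n_is_i}=r_i$, with no coprimality hypothesis on the $o_j$ used anywhere. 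Concretely, $k=2$, $r_1=r_2=-1$, $o_1=o_2=2$, $s=4$, $\zeta=i$, $n_1=n_2=1$ satisfies the right-hand side although the orders are not coprime (and here $d=2$, so the equality $d=1$ you need simply does not follow; note also that $d=1$ is literally equivalent to the conclusion, so asserting it is circular). The paper proves only the forward direction and declares the reverse ``immediate''; that is accurate only under the reading it actually uses in Corollaries~\ref{zzcor4.7} and~\ref{zzcor4.8}, where the integers playing the role of the $o_i$ are \emph{given} pairwise coprime and one merely checks that $\zeta^{n_is_i}$ then has order $o_i$. So your instinct that the converse is the obstacle was sound, but the missing step is not delicate --- it is false in the form you set it up, and the reverse direction has to be understood (or restated) as in those corollaries rather than derived from the shape of the exponents.
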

\begin{proof}
We prove the forward direction only, since the reverse implication is immediate. Since $r_i$ has order $o_i$, it is a primitive $o_i$-th root of unity. Now $\zeta^{o_1\cdots \hat{o}_i \cdots o_k}$ is also a primitive $o_i$-th root of unity, so there exists an integer $m_i$, such that $m_i$ and $o_i$ are coprime, and
\begin{align*}
    r_i &= \zeta^{m_i s_i}
\end{align*}
where $s_i=o_1\cdots \hat{o}_i \cdots o_k$. The integers $m_i$ and $s$ may not be coprime, but we can choose an integer $j$ such that $n_i:=m_i+jo_i$ and $s$ are coprime. Firstly, since $o_i$ and $s_i$ are coprime, we have $\left\{m_i+jo_i\right\}_{j=0}^{s_i}=\ZZ/s_i\ZZ$ as sets. Hence there exists an integer $j$ such that $n_i$ is coprime to $s_i$. But $m_i$ and $o_i$ are assumed to be coprime, so $n_i$ and $o_i$ are coprime, which means $n_i$ and $s$ are coprime. Finally, 
\[
    \zeta^{n_i s_i} = \zeta^{(m_i + jo_i) s_i} = \zeta^{m_i s_i + js } = \zeta^{m_i s_i} = r_i
\]
and we are done.
\end{proof}

\begin{corollary}\label{zzcor4.7}
Assume Hypothesis~\ref{zzhyp0.3} and let $n=3$. 
Then $Z$ is regular if and only if the orders 
of $p_{12}$, $p_{13}$ and $p_{23}$ are pairwise 
coprime. Equivalently, there exist pairwise 
coprime integers $a,b,c\ge 1$ such that 
$p_{12}=\xi^{ab}$, $p_{13}=\xi^{ac n_0}$ and 
$p_{23}=\xi^{bcm_0}$ where $\xi$ is a primitive 
$\ell$th root of unity with $\ell=abc$ and 
$n_0,m_0$ are coprime to $\ell$. 
\end{corollary}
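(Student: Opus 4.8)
The plan is to chain together three results already in place: the structural criterion Theorem~\ref{zzthm0.6}, the local computation Proposition~\ref{zzpro4.4}, and the number-theoretic repackaging Lemma~\ref{zzlem4.6}. First I would invoke Theorem~\ref{zzthm0.6}, equivalence (1)$\Leftrightarrow$(3): $Z$ is regular precisely when $Z=\kk[x_1^{\mff_1},x_2^{\mff_2},x_3^{\mff_3}]$. By Lemma~\ref{zzlem4.1}(2) this holds if and only if $\mff_i\be_i\otimes 1\in K_{(p)}$ for every prime $p\mid\ell$ and every $i$, so the problem reduces to a prime-by-prime question.

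Fix $p\mid\ell$. Because $\gcd(b_{ij},\ell)_{i,j}=1$, some $b_{ij}$ is prime to $p$; after permuting $\{1,2,3\}$ (the permutation may depend on $p$, but that is harmless since regularity of $Z$ and pairwise coprimality of the three orders are both invariant under relabelling the generators) I may assume $\alpha_{12}=\min_{i<j}\alpha_{ij}=0$. Proposition~\ref{zzpro4.4} then gives: $\mff_i\be_i\in K_{(p)}$ for all $i$ iff $\alpha_{13}=\alpha_{23}=N$, where $N=\nu_p(\ell)$. Since $o(p_{ij})=\ell/\gcd(b_{ij},\ell)$, a quick computation gives $\nu_p(o(p_{ij}))=N-\alpha_{ij}$, so the condition says $p\nmid o(p_{13})$ and $p\nmid o(p_{23})$ while $p\mid o(p_{12})$ (indeed with full $p$-part $p^N$); undoing the permutation, it says $p$ divides exactly one of the three orders.

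Next I would globalise. Hypothesis~\ref{zzhyp0.3} forces $\ell=\lcm_{i<j}o(p_{ij})$, so every prime divisor of $\ell$ divides at least one of the three orders. Combining with the previous step, $Z$ is regular iff every $p\mid\ell$ divides exactly one of $o(p_{12}),o(p_{13}),o(p_{23})$, i.e.\ iff these orders are pairwise coprime; that settles the first equivalence. For the final parametrized form I would set $a=o(p_{23})$, $b=o(p_{13})$, $c=o(p_{12})$ (so $\ell=abc$ once these are pairwise coprime) and apply Lemma~\ref{zzlem4.6} with $k=3$, $(r_1,r_2,r_3)=(p_{23},p_{13},p_{12})$, $(o_1,o_2,o_3)=(a,b,c)$: this produces a primitive $\ell$th root $\xi$ and integers $n_1,n_2,n_3$ prime to $\ell$ with $p_{23}=\xi^{n_1bc}$, $p_{13}=\xi^{n_2ac}$, $p_{12}=\xi^{n_3ab}$. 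Replacing $\xi$ by $\xi^t$ for a suitable $t$ prime to $\ell$ with $t\equiv n_3\pmod c$ (available by CRT since $\gcd(n_3,c)=1$) absorbs $n_3$, turning the exponent of $p_{12}$ into $ab$ and merely rescaling $n_1,n_2$ to new units $m_0,n_0$ modulo $a,b$; this is the asserted form. The reverse direction is immediate: the displayed matrix visibly has $o(p_{12})=c$, $o(p_{13})=b$, $o(p_{23})=a$, which are pairwise coprime.

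The steps are all short, and the real content is borrowed from Proposition~\ref{zzpro4.4} and Lemma~\ref{zzlem4.6}. The only places requiring care are the two normalisations — checking that the $p$-dependent permutation achieving $\alpha_{12}=0$ does not corrupt the global conclusion, and the CRT change of primitive root converting the symmetric output of Lemma~\ref{zzlem4.6} into the specific exponents $ab$, $acn_0$, $bcm_0$ — and I expect neither to be a genuine obstacle.
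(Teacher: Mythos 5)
Your proposal is correct and follows essentially the same route as the paper: reduce via Theorem~\ref{zzthm0.6} and Lemma~\ref{zzlem4.1}(2) to a prime-by-prime check, apply Proposition~\ref{zzpro4.4} together with $\nu_p(o(p_{ij}))=N-\alpha_{ij}$ to identify regularity with pairwise coprimality of the orders, and then invoke Lemma~\ref{zzlem4.6} for the parametrized form. You simply spell out two points the paper leaves implicit (the $p$-dependent relabelling that achieves $\alpha_{12}=0$, and the CRT change of primitive root absorbing the unit in the exponent of $p_{12}$), both handled correctly.
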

\begin{proof}
For each prime $p$ dividing $\ell$, we have 
$\nu_p(o(p_{ij}))=N-\alpha_{ij}$. The orders of 
$\{p_{ij}\}_{i<j}$ being pairwise coprime is equivalent to 
at least two of $\{\alpha_{ij}\}_{i<j}$ being equal to $N$. 
By Proposition~\ref{zzpro4.4} and Lemma~\ref{zzlem4.1}, 
this occurs if and only if $Z$ regular. The last 
statement follows from Lemma~\ref{zzlem4.6}.
\end{proof}

\begin{corollary}\label{zzcor4.8}
Assume Hypothesis~\ref{zzhyp0.3} and let $n=4$. 
Let $\rho=\gcd(\ell, \pf(B))$, $c_{ij} = \gcd(b_{ij}, \rho)$, 
$\omega$ be a primitive $\rho$th root of unity, and 
$q_{ij}=\omega^{c_{ij}}$. Then $Z$ is regular if and only 
if the orders of $\{q_{ij}\}_{i<j}$ are pairwise coprime. 
Equivalently, there exist pairwise coprime integers 
$o_{ij}\ge 1$ such that
\[ q_{ij} = \omega^{n_{ij}\hat{o}_{ij}}\]
where $(n_{ij}, \rho)=1$ and $\prod_{i<j} o_{ij}
=\rho$ and $\hat{o}_{ij} = \rho / o_{ij}$. 
\end{corollary}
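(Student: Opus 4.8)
The plan is to localize at each prime, apply Proposition~\ref{zzpro4.5}, and then re-encode its conclusion in terms of $p$-adic valuations of the integers $o(q_{ij})$. By Theorem~\ref{zzthm0.6}, $Z$ is regular if and only if $Z=\kk[x_1^{\mff_1},\dots,x_4^{\mff_4}]$, and by Lemma~\ref{zzlem4.1}(2) this holds if and only if $\mff_i\be_i\otimes 1\in K_{(p)}$ for every prime $p\mid\ell$ and every $i$. So I fix a prime $p\mid\ell$. If $p\nmid\rho$, then $\alpha:=\min\{N,\nu_p(\pf(B))\}=\nu_p(\rho)=0$, the inequalities in Proposition~\ref{zzpro4.5} are automatic, and $p\nmid o(q_{ij})$ for all $i<j$ since $o(q_{ij})\mid\rho$; such primes contribute nothing to either side. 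So assume $p\mid\rho$, hence $N\ge\alpha\ge 1$. Since $\gcd(b_{ij},\ell)_{i,j}=1$, some off-diagonal entry has $\nu_p(b_{ij})=0$; after relabeling $\{1,2,3,4\}$ I may assume it is $b_{12}$, so that $\alpha_{12}=\min\{\alpha_{ij}\}=0$ and Proposition~\ref{zzpro4.5} applies, giving: $\mff_i\be_i\otimes 1\in K_{(p)}$ for all $i$ if and only if $\alpha_{ij}\ge\alpha$ for the four ``cross'' pairs $(1,3),(1,4),(2,3),(2,4)$.

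The main point is that these four inequalities are equivalent to the statement that \emph{at most one} of the six $\alpha_{ij}$ is $<\alpha$. One implication is immediate: $\alpha_{12}=0<\alpha$ already uses up the single allowed pair, forcing the other five $\alpha_{ij}$ --- in particular the four cross ones --- to be $\ge\alpha$. For the converse I use the identity $\pf(B)=b_{12}b_{34}-b_{13}b_{24}+b_{14}b_{23}$: if the four cross inequalities hold, then $\nu_p(b_{13}b_{24})$ and $\nu_p(b_{14}b_{23})$ are both $\ge 2\alpha>\alpha$, so, using $\nu_p(b_{12})=0$, one has $\nu_p(b_{34})=\nu_p(b_{12}b_{34})=\nu_p\bigl(\pf(B)+b_{13}b_{24}-b_{14}b_{23}\bigr)$, which equals $\nu_p(\pf(B))=\alpha$ when $\alpha<N$ and is $\ge N$ when $\alpha=N$; either way $\alpha_{34}\ge\alpha$, so $\alpha_{12}$ is the only $\alpha_{ij}$ that can be $<\alpha$. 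I expect this short Pfaffian computation to be the crux of the argument; it is precisely what makes $n=4$ subtler than the $n=3$ case of Corollary~\ref{zzcor4.7}.

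Next I pass to the orders. A direct calculation gives $\nu_p(o(q_{ij}))=\nu_p(\rho)-\nu_p(\gcd(b_{ij},\rho))=\alpha-\min(\nu_p(b_{ij}),\alpha)$, which is positive exactly when $\nu_p(b_{ij})<\alpha$, i.e.\ exactly when $\alpha_{ij}<\alpha$. Combining with the previous paragraph, for each prime $p\mid\ell$ the condition ``$\mff_i\be_i\otimes 1\in K_{(p)}$ for all $i$'' holds if and only if $p$ divides at most one of the $o(q_{ij})$. Quantifying over $p$, together with the first paragraph, shows that $Z$ is regular if and only if the orders $\{o(q_{ij})\}_{i<j}$ are pairwise coprime.

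Finally, for the ``equivalently'' clause: when the orders are pairwise coprime, for each $p\mid\rho$ the unique pair with $p\mid o(q_{ij})$ is the relabeled $(1,2)$ with $\nu_p(b_{12})=0$, so $\nu_p(o(q_{12}))=\alpha=\nu_p(\rho)$ accounts for all of $\nu_p(\rho)$; as this holds for every prime, $\prod_{i<j}o(q_{ij})=\rho$. Applying Lemma~\ref{zzlem4.6} with $\zeta=\omega$ and $o_{ij}=o(q_{ij})$ then produces integers $n_{ij}$ coprime to $\rho$ with $q_{ij}=\omega^{n_{ij}\hat{o}_{ij}}$ and $\hat{o}_{ij}=\rho/o_{ij}$, while the reverse implication of Lemma~\ref{zzlem4.6} (together with the computation $o(\omega^{n_{ij}\hat{o}_{ij}})=o_{ij}$) gives the converse.
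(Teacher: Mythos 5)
Your proof is correct and follows essentially the same route as the paper's: localize at each prime, invoke Proposition~\ref{zzpro4.5}, use the rearranged Pfaffian identity $b_{12}b_{34}=\pf(B)+b_{13}b_{24}-b_{14}b_{23}$ to control $\alpha_{34}$, translate the inequalities into the $p$-valuations $\nu_p(o(q_{ij}))=\alpha-\min\{\nu_p(b_{ij}),\alpha\}$, and finish the ``equivalently'' clause with Lemma~\ref{zzlem4.6}. Your write-up merely adds explicit care at points the paper leaves implicit (primes dividing $\ell$ but not $\rho$, the per-prime relabeling, and the verification that $\prod_{i<j}o(q_{ij})=\rho$ before applying Lemma~\ref{zzlem4.6}), which is fine.
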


\begin{proof}
We first show that the conditions on $\alpha_{ij}$ in 
Proposition~\ref{zzpro4.5} imply that $\alpha_{34}\ge 
\alpha$ as well. Rearranging the equation for the Pfaffian 
gives 
\[ b_{12}b_{34} = \pf(B)+b_{13}b_{24}-b_{14}b_{23}.\]
Taking $p$-valuations and noting that $\alpha_{12}=0$ gives 
the inequality.

Let $p$ be a prime factor of $\rho$. Then
$\nu_p(o(q_{ij})) = \nu_p(\rho) - \nu_p(c_{ij})$. We also have 
$\nu_p(\rho) =\alpha$ and
$\nu_p(c_{ij}) = \min\{\alpha_{ij}, \alpha\}$. 
Proposition~\ref{zzpro4.5} and Lemma~\ref{zzlem4.1} say 
that $Z$ is regular if and only if $\alpha_{ij}\ge\alpha$ 
for $(i,j)\ne (1,2)$. These conditions imply  
$\nu_p(o(q_{ij}))=0$ except $(i,j)=(1,2)$. Hence $o(q_{ij})$ 
are pairwise coprime.

Conversely, we get that $\nu_p(o(q_{ij}))=0$ for all 
$(i,j)\ne (1,2)$. This means $\min\{\alpha_{ij}, \alpha\} 
= \alpha$ implying $\alpha_{ij}\ge \alpha$ 
for such $(i,j)$. 

The last statement follows from Lemma~\ref{zzlem4.6}.
\end{proof}

We finish this section with the proofs of Theorems~\ref{zzthm0.7} 
and \ref{zzthm0.9}.

\begin{proof}[Proof of Theorem~\ref{zzthm0.7}]
In the $n=2$ case, $Z=\kk[x_1^\ell,x_2^\ell]$ so the 
result is clear. The result in the $n=3$ case follows from 
Corollary~\ref{zzcor4.7}, while the result in the $n=4$ case is 
due to Corollary~\ref{zzcor4.8}.
\end{proof}

\begin{proof}[Proof of Theorem~\ref{zzthm0.9}] 
If the orders of $\{p_{ij}\}_{i<j}$ are pairwise coprime, 
then for any prime $p\mid \ell$, the matrix $\overline{B}_{(p)}$ 
has rank $2$. In particular, $\overline{B}_{(p)}$ is similar to the 
block diagonal matrix formed from 
\begin{align*}
    \begin{bmatrix}
         0 & b \\
         -b& 0
    \end{bmatrix}
\end{align*}
and a zero matrix of the appropriate dimension. Hence 
$K_{(p)}$ is generated by 
$p^{N-\nu_p(b)}\be_1,p^{N-\nu_p(b)}\be_2$ and $\be_3,\ldots,\be_n$. 
This shows that $\nu_p(\mff_i)=N-\nu_p(b)$ for $i=1,2$ and 
$\nu_p(\mff_j)=0$ for $j=3,\ldots,n$. The result follows from 
Lemma~\ref{zzlem4.1}(2).

Part (1) of the consequence follows from \eqref{E4.1.1}.
By definition, $\ell=\prod_{i<j} o(p_{ij})$. Then 
part (2) of the consequence follows from part (1) and 
Theorem \ref{zzthm0.6}(7). 
\end{proof}

\section{Gorenstein center}\label{zzsec5}

Throughout this section we assume Hypothesis~\ref{zzhyp0.3}. 
Here we study the question of when the center of $S$ is 
Gorenstein. 

\begin{proof}[Proof of Theorem~\ref{zzthm0.11}]
(1) $\Leftrightarrow$ (2): This is \cite[Theorem 0.2]{KKZ4}.

(2) $\Leftrightarrow$ (3): 
Let $\overline{\phi_i}$ denote the image of $\phi_i$ in $O/H$. 
Since $S^H=\kk[x_1^{\mff_1}, \cdots, x_n^{\mff_n}]$ (see the 
proof of Proposition~\ref{zzpro2.6}) and $\overline{\phi_i} 
\in O/H$ acts on $S^H$ as a diagonal map for each $i$, 
\eqref{E1.4.2} implies that
\[
\hdet_{O/H}(\overline{\phi_i})=\overline{\phi_i}
(x_1^{\mff_1}\cdots x_n^{\mff_n}) 
(x_1^{\mff_1}\cdots x_n^{\mff_n})^{-1}
=\prod_{s=1}^{n} p_{is}^{\mff_s}.
\]
Since $\oj_s \pg_s = x_1^{\mff_1} \cdots x_n^{\mff_n}$, we 
see that (2) $\Leftrightarrow$ (3).

(3) $\Leftrightarrow$ (4): This is clear as an element 
$f \in Z$ if and only if $\phi_i(f) = f$ for all $i$.
\end{proof}

Next we prove Theorem~\ref{zzthm0.10}.

\begin{proof}[Proof of Theorem~\ref{zzthm0.10}]
(2) $\Leftrightarrow$ (3): This is Lemma~\ref{zzlem1.5}(2). 

(1) $\Leftrightarrow$ (3):
The Nakayama automorphism $\mu$ of $S$ 
follows from \cite[Proposition 4.1]{LWW}
and it is easy to show that (3) holds if and only if $\mu=\id$. Equivalently, 
$S$ is Calabi--Yau.

(3) $\Leftrightarrow$ (4): This follows by an easy computation.

Suppose that $O$-action has trivial homological determinant. 
By \cite[Theorem 1.21]{Zhu22}, Auslander's Theorem holds for $(S, O)$.
Hence, all statements in Theorem~\ref{zzthm0.4} hold.
Since Auslander's Theorem holds for $(S, O)$, $H$ is trivial.
Then Theorem \ref{zzthm0.11}(2) holds. Hence all statements in 
Theorem~\ref{zzthm0.11} hold.
\end{proof}

Theorem~\ref{zzthm0.11}(3) gives us necessary and sufficient 
conditions for $Z=S^O$ to be Gorenstein. It is more convenient 
for us to express the condition in terms of the matrix $B$.

\begin{lemma}[$\equiv$ Theorem~\ref{zzthm0.11}(3)]\label{zzlem5.1}
Assume Hypothesis~\ref{zzhyp0.3}.
The center $Z$ of $S$ is Gorenstein if and only if the 
following equation holds in $\overline{\ZZ}^n$:
\[ \overline{B}(\mff_1,\ldots,\mff_n)^T = 0.\]
\end{lemma}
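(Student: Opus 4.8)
\textbf{Proof proposal for Lemma~\ref{zzlem5.1}.}
The plan is to translate the criterion in Theorem~\ref{zzthm0.11}(3) directly into a statement about the matrix $\overline{B}$ acting on the vector $(\mff_1,\ldots,\mff_n)^T$. Recall from Theorem~\ref{zzthm0.11} that $Z$ is Gorenstein if and only if $\oj_S\,\pg_S \in Z$, and that $\oj_S\,\pg_S = x_1^{\mff_1}\cdots x_n^{\mff_n}$ by the definitions in \eqref{E0.3.2} and the formula for $\pg_S$. So the assertion to be proved is simply that $x_1^{\mff_1}\cdots x_n^{\mff_n}$ is central precisely when $\overline{B}(\mff_1,\ldots,\mff_n)^T = 0$ in $\overline{\ZZ}^n$. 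First I would invoke Lemma~\ref{zzlem4.1}(1), which says that a monomial $x^{\bu} = x_1^{u_1}\cdots x_n^{u_n}$ is central if and only if $\bu \in K$, where $K$ is the preimage in $\ZZ^n$ of the kernel $\overline{K}$ of $\overline{B}$. Applying this with $\bu = (\mff_1,\ldots,\mff_n)^T$, centrality of $\oj_S\,\pg_S$ is equivalent to $(\mff_1,\ldots,\mff_n)^T \in K$, which by the definition of $K$ is equivalent to $\overline{B}(\mff_1,\ldots,\mff_n)^T = 0$ in $\overline{\ZZ}^n$.

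Assembling these observations gives the proof in a few lines: invoke Theorem~\ref{zzthm0.11}(4) to replace ``$Z$ Gorenstein'' by ``$\oj_S\,\pg_S \in Z$''; note $\oj_S\,\pg_S = x_1^{\mff_1}\cdots x_n^{\mff_n}$ from the definitions; then apply the centrality criterion of Lemma~\ref{zzlem4.1}(1) to this particular monomial. There is essentially no obstacle here — the lemma is a reformulation, and all the substantive work (identifying $O$ with $\Oz(S)$, the structure of $S^H$, the equivalence of Gorensteinness with centrality of $\oj_S\,\pg_S$) has already been done in Theorem~\ref{zzthm0.11} and Proposition~\ref{zzpro2.6}. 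The one point that requires a word of care is making sure the identity $\oj_S\,\pg_S = x_1^{\mff_1}\cdots x_n^{\mff_n}$ is stated correctly: $\oj_S = \prod_i x_i^{\mff_i - 1}$ and $\pg_S = \prod_i x_i$, so their product is $\prod_i x_i^{\mff_i}$, using that the $x_i$ $q$-commute so rearranging factors only changes the monomial by a nonzero scalar, which does not affect centrality.

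If a self-contained argument is preferred over citing Theorem~\ref{zzthm0.11}(4), one can argue directly: $x_1^{\mff_1}\cdots x_n^{\mff_n}$ is central iff $\phi_i$ fixes it for all $i$, and $\phi_i(x_1^{\mff_1}\cdots x_n^{\mff_n}) = \left(\prod_{s=1}^n p_{is}^{\mff_s}\right) x_1^{\mff_1}\cdots x_n^{\mff_n}$, so centrality is equivalent to $\prod_{s=1}^n p_{is}^{\mff_s} = 1$ for all $i$; writing $p_{is} = \xi^{b_{is}}$ this says $\sum_s b_{is}\mff_s \equiv 0 \pmod{\ell}$ for all $i$, which is exactly the vanishing of the $i$th coordinate of $\overline{B}(\mff_1,\ldots,\mff_n)^T$. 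This is the shortest route and I would present it this way, as it makes the equivalence transparent without relying on the full machinery of $K$ and $\overline{K}$. The main (trivial) obstacle is just bookkeeping with signs and the convention $b_{ij} = -b_{ji}$, which here is harmless since the center condition is symmetric.
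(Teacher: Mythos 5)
Your proposal is correct and matches the paper's (implicit) argument: Lemma~\ref{zzlem5.1} is just the translation of Theorem~\ref{zzthm0.11}(3)--(4) into matrix form, writing $p_{is}=\xi^{b_{is}}$ so that $\prod_{s} p_{is}^{\mff_s}=1$ for all $i$ becomes $\overline{B}(\mff_1,\ldots,\mff_n)^T=0$, exactly as in your direct computation with the $\phi_i$ (equivalently, via the centrality criterion of Lemma~\ref{zzlem4.1}(1)). Your remark that $\oj_S\,\pg_S$ equals $x_1^{\mff_1}\cdots x_n^{\mff_n}$ up to a nonzero scalar, which does not affect centrality, is the right level of care.
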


We already computed the $\{\mff_i\}$ for $n=3,4$ in the 
previous section, so we are ready to prove Theorem~\ref{zzthm0.12}.

\begin{proof}[Proof of Theorem~\ref{zzthm0.12}]
(1) The result for $n=2$ is well-known. 

(2) The result for $n=3$ is immediate from Lemma~\ref{lem.fhyper}.

(3) Let $\bv=\ell/\gcd(\pf(B),\ell) (v_1,\ldots,v_4)^T$. 
The image of this vector in $(\ZZ_{(p)})^4$ is a unit multiple of 
\[
    p^{N-\alpha} \begin{bmatrix}     \gcd(b_{23},b_{24},b_{34})\\\gcd(b_{13},b_{14},b_{34}) 
				\\ \gcd(b_{12},b_{14},b_{24}) \\ \gcd(b_{12},b_{13},b_{23})
    \end{bmatrix}.
\]
After relabelling if necessary, we can assume $b_{12}$ is a unit. 
The equation  
\[ b_{12}b_{34} = \pf(B) + b_{13}b_{24} - b_{14}b_{23} \]
shows that $\alpha_{34}\ge \alpha$, which means we can drop $b_{34}$ 
from the arguments in the first two $\gcd$s above. Comparing with 
\eqref{E4.5.2} shows that the $i$th component of $\bv$ has the 
same $p$-valuation as $\mff_i$. This holds for all prime factors 
$p$ of $\ell$ so we are done.
\end{proof}

We end this section by discussing some examples and showcase the subtleties in the results above.
We remark that for a commutative ring, the following set of implications hold:
\[ \text{regular $\Rightarrow$ hypersurface ring $\Rightarrow$ complete intersection $\Rightarrow$ Gorenstein}.\]
In this paper, we have focused on the regular and Gorenstein properties of the center of $\SP$, but it would be interesting to determine conditions equivalent to the center of $\SP$ being a hypersurface ring or a complete intersection. Recall that $Z$ is a (commutative) hypersurface ring if $Z\cong\kk[x_1,\hdots,x_n]/(f)$ for some homogeneous polynomial $f$. In this case, the Hilbert series satisfies $h_Z(t)=p(t)/q(t)$ where $p(t)$ is a cyclotomic polynomial.

\begin{example}
Set $n=3$.

(1) By Theorem \ref{zzthm0.10}, if $S$ is Calabi-Yau, then
\[ Z = \kk[x_1^\ell, x_2^\ell, x_3^\ell, x_1x_2x_3]/(x_1^\ell x_2^\ell x_3^\ell - (x_1x_2x_3)^\ell)\]
is a hypersurface ring.

(2) 
Let $\ell>1$ and consider the following $B$ matrix:
\[
B = \begin{pmatrix}
0 & 0 & 1 \\ 
0 & 0 & -1 \\
-1 & 1 & 0
\end{pmatrix}.
\]
By Lemma \ref{lem.fhyper} and the subsequent discussion, $\mff = (1,1,\ell)$ and $g = (\ell,\ell,\ell)$.
It follows that $Z$ is generated by $\{x^\ell, y^\ell, z^\ell, xy\}$ and so
\[ Z \cong \kk[y_1,y_2,y_3,y_4]/(y_4^\ell - y_1y_2).\]
Thus, $Z$ is a hypersurface ring that is not Calabi--Yau.

(3)
Let $\ell=24$ and consider the following matrix:
\[ B_k = \begin{pmatrix}0 & 4 & 6 \\ -4 & 0 & k \\ -6 & -k & 0\end{pmatrix}\]
where $k=3$ or $9$.
In either case, Lemma \ref{lem.fhyper} gives $\mff=(3,6,4)$ and $g=(12,24,8)$.
By Lemma \ref{zzlem5.1}, $S$ is Gorenstein when $k=9$ and non-Gorenstein when $k=3$. Suppose $k=9$, then $Z$ is generated by $x_1^{12}$, $x_2^{24}$, $x_3^8$, and $x_1^3x_2^6x_3^4$. 
One checks that the Hilbert series of $Z$ is 
\[ h_Z(t) = \frac{p(t)}{q(t)}=\frac{1+t^{13}+t^{18}+t^{31}}{(1-t^{12})(1-t^{24})(1-t^8)}.\]
It is clear that $p(t)$ is not cyclotomic, and thus $Z$ is not a hypersurface ring.
\end{example}

\section{Questions and comments}\label{zzsec6}

In this section we list some questions and comments related 
to the theorems given in this paper. Much of this paper has 
been devoted to the study of the algebras $\SP$. In addition 
to classifying those $\SP$ such that $\ZSP$ is regular or 
Gorenstein for $n \geq 5$, one could consider the following 
problems.

\begin{question}\label{zzque6.1}
\begin{enumerate}
\item[(1)]
What conditions on the PI skew polynomial ring $\SP$ are equivalent to the center $\ZSP$ being a hypersurface ring or a complete intersection ring?
\item[(2)]
Let $\SP$ be the skew polynomial ring in $n$ variables. By 
Remark~\ref{zzrem3.7}, $0<\p(\SP,O)<n$. For each $0<i<n$, can 
we find some skew polynomial ring $S$ such that $\p(S, O)=i$?
\item[(3)] 
(Ken Goodearl) 
The matrix $B$ controls the PI degree of $S$ \cite{DCP}. Is there a 
direct computation of PI degree from the parameters $b_{ij}$? Is the PI degree related to the other invariants in this paper?
\item[(4)] 
(Colin Ingalls) 
In cases that the center $\ZSP$ is not Gorenstein, under what 
conditions on $\SP$ is it $\mathbb{Q}$-Gorenstein (see 
\cite{CCTIJKKLNOV})?
\item[(5)] The classification of skew polynomial rings up to isomorphism is known (see \cite{Ga}). Two skew polynomial rings are \emph{birationally equivalent} if their associated quotient division rings are isomorphic. What is the classification of skew polynomial rings up to birational equivalence?
\end{enumerate}
\end{question}

As we have mentioned, the algebras $\SP$ are viewed as a good testing ground for many 
problems in noncommutative invariant theory. It would 
be interesting to study the problems in this paper more 
generally.

\begin{question}\label{zzque6.3}
Let $A$ be a noetherian PI AS regular algebra.
\begin{enumerate}
\item[(1)]
Is there is a version of Corollary \ref{zzcor0.12}(1) or (2) for $A$?
\item[(2)]
Does (1) $\Leftrightarrow$ (3) in Theorem~\ref{zzthm0.10} hold for $A$?
Namely, is it true that the homological determinant of $\Oz(A)$ 
is trivial if and only if $A$ is Calabi--Yau?
\item[(3)] 
Can we define invariants $\oj_A$, $\oa_A$, and $\od_A$ such that 
they generalize $\oj_S$, $\oa_S$, and $\od_S$ and control 
properties of $A$ and its center?
\item[(4)] 
For the skew polynomial ring $\SP$, the ozone group $O$ acts 
on $\SP$ such that $\ZSP = \SP^O$. Is there a semisimple 
Hopf algebra $H$ acting on $A$ such that $Z(A)=A^H$?
\item[(5)] 
Suppose $A$ is generated in degree 1 and suppose that the center 
of $A$ is $\kk[c_1,\cdots, c_n]$ where $\deg c_i>1$ for every 
$i$. 
Does it hold that $\Aut(A)$ is affine?

Related to this is the notion of \emph{$\mathrm{LND}$-rigidity}. Let $\mathrm{LND}(A)$ denote the intersection of all kernels of locally nilpotent derivations of $A$.
Under the above hypotheses, does it hold that $\mathrm{LND}(A)=\{0\}$.
We note that both of these hold when $A$ is a skew polynomial ring.
\item[(6)]
Is there a unique mozone subring of $A$? That is, does 
$\Phi_Z(A)$ have a minimum element? 
\end{enumerate} 
\end{question}

Question~\ref{zzque6.3}(4) would be especially interesting 
in the case when $A$ is a (PI) Sklyanin algebra.

One source of interesting examples that may be useful for 
studying some of the above questions are graded noetherian 
down-up algebras.

Let $\alpha,\beta \in \kk$ with $\beta \neq 0$. A noetherian 
graded down-up algebra $A=A(\alpha,\beta)$ is generated as 
an algebra by $x$ and $y$ and subject to the relations
\[x^2 y -\alpha xyx -\beta yx^2=0=xy^2-\alpha yxy-\beta y^2x.\]
It is well-known that $A$ is AS regular, but is not isomorphic 
to any $\SP$. The group of graded algebra automorphisms of $A$ was 
computed by Kirkman and Kuzmanovich \cite[Proposition 1.1]{KK}. 
If $\beta\neq \pm 1$, then every graded algebra automorphism 
of $A$ is diagonal. By \cite[(1.5.6)]{LMZ}, the Nakayama 
automorphism of $A$ is determined by
\[\mu: x\mapsto -\beta x, \quad y\mapsto -\beta^{-1} y.\]
Let $\omega_1$ and $\omega_2$ be the roots of the 
characteristic equation
\[w^2-\alpha w-\beta=0\]
and let $\Omega_i=xy-\omega_i yx$ for $i=1,2$. It is easy to 
see that for $\{i,j\}=\{1,2\}$ we have
\[
x\Omega_i=\omega_j \Omega_i x, \quad
\omega_j y \Omega_i = \Omega_i y.
\]
Note that $A$ is PI if and only if both $\omega_1$ and $\omega_2$ 
are roots of unity.

Let
\[\fb:=\Omega_1\Omega_2.\]
If $\sigma\in \Autgr(A)$ is diagonal, then $\hdet(\sigma)
=\left(\det \sigma\restrict{A_1}\right)^2$ 
\cite[Theorem 1.5]{KK}. Using this fact, one can easily 
check that
\begin{align}\label{E6.3.1}
\sigma(\fb)= \hdet(\sigma) \fb.
\end{align}
This equation should be compared with \eqref{E1.4.2}.

\begin{proposition}\label{zzpro6.4}
Let $A=A(\alpha,\beta)$ be a PI noetherian graded down-up 
algebra. Then $A$ is Calabi--Yau if and only if $\fb \in Z(A)$.
\end{proposition}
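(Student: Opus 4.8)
The plan is to reduce the statement to a single commutation identity between $\fb$ and the generators. First, since $A$ is AS regular and generated in degree one, its Nakayama automorphism $\mu$ is graded, so $\mu=\id$ if and only if $\mu\restrict{A_1}=\id$; by the formula $\mu\colon x\mapsto-\beta x,\ y\mapsto-\beta^{-1}y$ recorded above, this holds exactly when $\beta=-1$. Thus $A$ is Calabi--Yau if and only if $\beta=-1$, and it suffices to prove that $\fb\in Z(A)$ if and only if $\beta=-1$.

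Next I would slide $x$ and $y$ past $\fb=\Omega_1\Omega_2$ using the relations $x\Omega_i=\omega_j\Omega_i x$ and $\omega_j y\Omega_i=\Omega_i y$ (for $\{i,j\}=\{1,2\}$) recorded above. Moving $x$ first through $\Omega_1$ and then through $\Omega_2$ gives $x\fb=\omega_1\omega_2\,\fb x$, and the analogous computation yields $y\fb=(\omega_1\omega_2)^{-1}\,\fb y$ (here $\omega_1,\omega_2$ are nonzero because $\omega_1\omega_2=-\beta\neq0$). Since $\omega_1\omega_2=-\beta$ by Vieta applied to $w^2-\alpha w-\beta$, this reads
\[ x\fb=-\beta\,\fb x,\qquad y\fb=-\beta^{-1}\,\fb y. \]
Conceptually this says $f\fb=\fb\,\mu(f)$ for all $f\in A$ (it holds on $A_1$, hence on $A$ by multiplicativity), which already makes the equivalence transparent.

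Finally I would use that $A$ is a noetherian domain and that $\fb\neq0$: the degree-two part $A_2$ has no relations, so it is spanned by $x^2,xy,yx,y^2$, whence each $\Omega_i=xy-\omega_i yx\neq0$ and the product $\fb$ is nonzero in the domain $A$. Since $x,y$ generate $A$, the element $\fb$ is central if and only if $x\fb=\fb x$ and $y\fb=\fb y$; cancelling $\fb x$ (resp.\ $\fb y$) on the right in the domain $A$, these hold if and only if $-\beta=1$, i.e.\ $\beta=-1$. Together with the first paragraph this gives the claim. There is no serious obstacle; the only points deserving care are the nonvanishing of $\fb$ and, if one wishes to be exhaustive, confirming that the displayed relations for $\Omega_1,\Omega_2$ persist when $\omega_1=\omega_2$, in which case $\fb=\Omega_1^2$ and the same formulas hold verbatim.
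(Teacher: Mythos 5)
Your proof is correct and follows essentially the same route as the paper, which argues Calabi--Yau $\Leftrightarrow \mu=\id \Leftrightarrow \beta=-1 \Leftrightarrow \omega_1\omega_2=1 \Leftrightarrow \fb\in Z(A)$ ``by computation''; you have simply written out that computation ($x\fb=\omega_1\omega_2\,\fb x$, $y\fb=(\omega_1\omega_2)^{-1}\fb y$) together with the nonvanishing of $\fb$ and the domain property needed to cancel.
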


\begin{proof}
Recall that $A$ is Calabi--Yau if and only if the Nakayama 
automorphism $\mu$ of $A$ is the identity, if and only if 
$\beta=-1$, if and only if $\omega_1 \omega_2=1$, if and 
only if $\Omega_1\Omega_2\in Z(A)$ by computation.
\end{proof}

It is easy to show, using these generating sets for $Z(A)$ 
given in \cite{kulk,zhaoDU}, that $Z(A)$ is regular when 
$(\alpha,\beta)=(0,1)$. We conjecture that this is the only 
case that $Z(A)$ is regular.

\begin{question}\label{xxque6.5}
Suppose $A=A(\alpha,\beta)$ be a PI noetherian graded down 
up algebra. For what parameters $(\alpha,\beta)$ is $Z(A)$ 
regular (resp. Gorenstein)?
\end{question}

The following should be compared with Theorem~\ref{zzthm0.11}.

\begin{proposition}\label{zzpro6.6}
Let $H$ be a finite subgroup of $\Autgr(A)$ consisting of 
diagonal automorphisms. The following are equivalent.
\begin{enumerate}
\item[(1)] 
The $H$-action on $A$ has trivial homological determinant.
\item[(2)] 
$A^H$ is Gorenstein.
\item[(3)] 
$\fb\in A^H$.
\end{enumerate}
\end{proposition}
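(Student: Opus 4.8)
The plan is to establish the cyclic chain (1) $\Rightarrow$ (2) $\Rightarrow$ (3) $\Rightarrow$ (1), borrowing the key homological input from the references already cited in the paper and isolating the down-up-specific computation to one clean step. The main structural tool is \eqref{E6.3.1}, which says $\sigma(\fb) = \hdet(\sigma)\,\fb$ for every diagonal $\sigma \in \Autgr(A)$; this is the down-up analogue of \eqref{E1.4.2}, and it is what makes $\fb$ play the role of the element $\pg_S$ (or $\oj_S\,\pg_S$) in Theorem~\ref{zzthm0.11}.

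For (1) $\Leftrightarrow$ (2): since $H$ consists of diagonal automorphisms of the AS regular algebra $A$, we may quote the noncommutative Watanabe-type theorem. More precisely, first pass to the subgroup $H' \le H$ generated by reflections; by the Shephard--Todd--Chevalley-type result (\cite{KKZ3}) $A^{H'}$ is again AS regular, and $H/H'$ acts on it as a small group of diagonal automorphisms, so by \cite[Theorem 0.2]{KKZ4} $A^H = (A^{H'})^{H/H'}$ is AS Gorenstein if and only if the $H/H'$-action has trivial homological determinant. It then remains to check that triviality of $\hdet$ on $H/H'$ (acting on $A^{H'}$) is equivalent to triviality of $\hdet$ on all of $H$ (acting on $A$) --- this follows because reflections $r$ satisfy $\hdet(r) = \det(r\restrict{A_1})^2 \cdot(\text{correction})$\,; in the diagonal down-up case $\hdet(r)$ for a reflection is forced by \cite[Theorem 1.5]{KK} to be a power that one computes directly, and one argues (as in the proof of Theorem~\ref{zzthm0.11}) that the reflections lie in the kernel of $\hdet$ precisely when needed. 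Alternatively, one can simply cite \cite[Theorem 2.4]{KZ}: $A^H$ is AS Gorenstein if and only if the "Jacobian" $\fj_{A,H}$ exists, i.e. $A_{\hdet^{-1}}$ is free of rank one over $A^H$ on both sides --- and then the computation below identifies that module.

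For (1) $\Leftrightarrow$ (3): by \eqref{E6.3.1}, for any $\sigma \in H$ we have $\sigma(\fb) = \hdet(\sigma)\,\fb$. If the $H$-action has trivial homological determinant, then $\hdet(\sigma) = 1$ for all $\sigma \in H$, so $\sigma(\fb) = \fb$, i.e. $\fb \in A^H$. Conversely, suppose $\fb \in A^H$. Then $\hdet(\sigma)\,\fb = \fb$ for all $\sigma \in H$; since $\fb = \Omega_1\Omega_2 \ne 0$ in the domain $A$, this forces $\hdet(\sigma) = 1$ for all $\sigma \in H$. This is the easy direction and is essentially immediate from the displayed equation. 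The one point to be careful about is that $\fb \neq 0$: this holds because each $\Omega_i = xy - \omega_i yx$ is a nonzero normal element of the domain $A$ (one sees $\Omega_i \ne 0$ by looking at the PBW basis), so their product is nonzero.

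The step I expect to be the main obstacle is the equivalence (1) $\Leftrightarrow$ (2) --- specifically, reducing the general diagonal group $H$ to its reflection subgroup and matching up the two homological-determinant conditions, since the down-up algebra is not a skew polynomial ring and $A^{H'}$ need not be a down-up algebra (it is only known to be AS regular of dimension three). If $A^{H'}$ fails to be a down-up algebra, then \eqref{E6.3.1} is no longer directly available on $A^{H'}$, and one must instead invoke the general machinery of \cite{KKZ4, KZ} to handle the $\hdet$ bookkeeping abstractly. I would circumvent this entirely by using the characterization via \cite[Theorem 2.4]{KZ}: $A^H$ is AS Gorenstein $\iff$ $A_{\hdet^{-1}}$ is free of rank one over $A^H$; and since $A$ is AS regular of dimension three with diagonal $H$-action, a decomposition of $A$ into $\hdet$-eigenspaces (analogous to \eqref{E2.7.1}) together with \eqref{E6.3.1} shows $\fb$ generates $A_{\hdet^{-1}}$ exactly when $\fb \in A^H$, i.e. exactly when (3) holds --- closing the loop without ever needing the structure of $A^{H'}$.
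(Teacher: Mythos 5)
Your equivalence (1) $\Leftrightarrow$ (3) is exactly the paper's argument (apply \eqref{E6.3.1} and use $\fb=\Omega_1\Omega_2\neq 0$), and that part is fine. The gap is in (1) $\Leftrightarrow$ (2). The paper's proof rests on a rigidity fact you never invoke: by \cite[Proposition 6.4]{KKZ5}, a noetherian graded down-up algebra admits \emph{no} reflections at all, so $H$ is automatically small; then \cite[Corollary 4.11]{KKZ4} (the Watanabe-type theorem together with its converse for small groups) gives directly that $A^H$ is Gorenstein if and only if $\hdet_H$ is trivial. Your proposed reduction to the reflection subgroup $H'$ is moot once this is known, and as you yourself note it cannot be carried out as written, since the Shephard--Todd--Chevalley theorem you cite (\cite{KKZ3}) is proved for skew polynomial rings, not for down-up algebras.

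Your fallback via \cite[Theorem 2.4]{KZ} does not close the loop either. That theorem gives ``$A^H$ Gorenstein $\iff$ $A_{\hdet^{-1}}$ is free of rank one over $A^H$,'' but the direction you need, (2) $\Rightarrow$ (1), does not follow from this: freeness of $A_{\hdet^{-1}}$ does not force $\hdet_H$ to be trivial in the presence of reflections (already for a polynomial ring and $H$ generated by a single classical reflection, $A^H$ is regular, hence Gorenstein, $A_{\hdet^{-1}}$ is free of rank one, yet $\hdet_H$ is nontrivial), so without first ruling out reflections in $H$ the argument is circular. Moreover, the identification of the generator is off: \eqref{E6.3.1} says $\sigma(\fb)=\hdet(\sigma)\fb$, i.e.\ $\fb\in A_{\hdet}$ (the reflection-arrangement eigenspace), not $A_{\hdet^{-1}}$; and when $\fb\in A^H$ the module $A_{\hdet^{-1}}$ equals $A^H$ and is generated by $1$, not by the degree-$4$ element $\fb$. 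So the ``eigenspace decomposition'' step, which is the crux of your (2) $\Rightarrow$ (3) direction, is not established. The fix is simply to quote the rigidity of down-up algebras as the paper does.
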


\begin{proof}
(1) $\Leftrightarrow$ (2): 
By \cite[Proposition 6.4]{KKZ5}, $H$ does not contain any 
reflections. By \cite[Corollary 4.11]{KKZ4}, $A^H$ is 
Gorenstein if and only if $\hdet_H$ is trivial.

(1) $\Leftrightarrow$ (3): 
By \eqref{E6.3.1}, $\hdet_H$ is trivial if and only if 
$g(\fb)=\fb$ for all $g\in H$, if and only if $\fb\in A^H$.
\end{proof}

The following should be compared with Theorem~\ref{zzthm0.11}.

\begin{corollary}\label{zzcor6.7}
The following are equivalent.
\begin{enumerate}
\item[(1)] 
$A^{O}$ is Gorenstein where $O = \Oz(A)$.
\item[(2)] 
$\fb\in A^{O}$.
\end{enumerate}
\end{corollary}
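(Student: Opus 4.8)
The plan is to derive Corollary~\ref{zzcor6.7} directly from Proposition~\ref{zzpro6.6} by taking $H$ to be the ozone group $O := \Oz(A)$. Indeed, once we know that $O$ is a finite subgroup of $\Autgr(A)$ all of whose elements are diagonal, the equivalence (2)$\Leftrightarrow$(3) of Proposition~\ref{zzpro6.6} applied with $H=O$ reads ``$A^O$ is Gorenstein if and only if $\fb\in A^O$'', which is exactly the assertion of the corollary.

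So the only thing to verify is that $O$ satisfies the hypotheses of Proposition~\ref{zzpro6.6}. Finiteness is immediate: $A$ is a noetherian PI AS regular algebra generated in degree $1$, so the bound recalled in the introduction (from \cite{CGWZ2}) gives $1\le |\Oz(A)|\le \rk(A_Z)<\infty$; moreover, by \cite{CGWZ2}, every element of $\Oz(A)$ is a graded algebra automorphism, so $O\subseteq \Autgr(A)$. Finally, $O$ consists of diagonal automorphisms: when $\beta\ne \pm 1$ this is automatic from Kirkman--Kuzmanovich \cite[Proposition 1.1]{KK}, since then every graded algebra automorphism of $A$ is diagonal; when $\beta=\pm 1$ one uses the explicit classification of $\Autgr(A)$ in \cite[Proposition 1.1]{KK} to see that the extra ``swap-type'' automorphisms do not fix $Z(A)$ pointwise (for instance by exhibiting a central element they move), hence do not lie in $\Oz(A)$, so that $O$ is still diagonal.

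With $O$ identified as a finite diagonal subgroup of $\Autgr(A)$, invoking Proposition~\ref{zzpro6.6} with $H=O$ completes the proof. The only step requiring genuine work is the diagonality of $O$ in the special cases $\beta=\pm 1$; everything else is a formal consequence of Proposition~\ref{zzpro6.6} together with the general facts about ozone groups from \cite{CGWZ2}. It is also worth noting, as a sanity check, that if $\beta=-1$ then $\fb\in Z(A)\subseteq A^O$ by Proposition~\ref{zzpro6.4}, consistent with the corollary forcing $A^O$ to be Gorenstein in that case.
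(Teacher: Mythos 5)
Your reduction is exactly the paper's: the published proof consists of quoting from \cite{CGWZ2} that $\Oz(A)$ consists of diagonal automorphisms and then applying Proposition~\ref{zzpro6.6} with $H=O$. Where you deviate is that you try to re-derive the diagonality of $O$ from the Kirkman--Kuzmanovich classification instead of citing \cite{CGWZ2}, and that is the one step you do not actually carry out: for $\beta=\pm1$ you assert that the extra non-diagonal graded automorphisms fail to fix $Z(A)$ pointwise ``for instance by exhibiting a central element they move'' without exhibiting one, and your description of them as ``swap-type'' is not accurate in the case $(\alpha,\beta)=(0,1)$, where besides the anti-diagonal maps $\Autgr(A)$ contains non-diagonal automorphisms such as $x\mapsto x$, $y\mapsto y+cx$ (indeed a copy of $\GL_2(\kk)$ acts), and these too must be ruled out of $\Oz(A)$. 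The claim you need is true and checkable --- e.g.\ in $A(0,1)$ the elements $x^2$, $y^2$, $xy+yx$ are central and every non-diagonal linear automorphism moves at least one of them, while for $\beta=-1$ the anti-diagonal automorphisms can be tested against the known generators of $Z(A)$ from \cite{kulk,zhaoDU} --- but as written this is a genuine omission rather than a proof. The simplest repair is to do what the paper does and invoke \cite{CGWZ2} for the statement that $\Oz(A)$ is diagonal (and finite, which you correctly note also follows from $|\Oz(A)|\le\rk(A_Z)$); alternatively, complete the case analysis above. The rest of your argument, including the application of Proposition~\ref{zzpro6.6} with $H=O$ and the sanity check via Proposition~\ref{zzpro6.4} when $\beta=-1$, is fine.
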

\begin{proof}
By \cite{CGWZ2}, the ozone group $O$ consists of diagonal 
automorphisms. The assertion follows from 
Proposition~\ref{zzpro6.6} by setting $H=O$.
\end{proof}

\begin{question}\label{zzque6.8}
Since $Z \subseteq A^O$, then clearly $\fb\in Z$ implies 
that $\fb\in A^{O}$. Does the opposite implication hold?
\end{question}

\subsection*{Acknowledgments}
We thank Ken Goodearl and Colin Ingalls for their thoughts 
on material in this paper, and for questions we have included 
in the last section. 
The authors also thank the referee for several comments and suggestions that improved the paper.
R. Won was partially supported by an 
AMS--Simons Travel Grant and Simons Foundation grant \#961085. 
J.J. Zhang was partially supported by the US National Science 
Foundation (No. DMS-2302087 and DMS-2001015).

\bibliographystyle{plain}

\end{document}